\documentclass[a4paper]{easychair}
\usepackage{amsmath,amssymb,amscd}
\setcounter{tocdepth}{3}
\usepackage{graphicx}
\usepackage{tikz}

%\noindent\keywordname\enspace\ignorespaces#1}

%\usepackage[latin1]{inputenc}
%\usetikzlibrary{arrows,automata}
\newtheorem{theorem}{Theorem}[section]
\newtheorem{corollary}{Corollary}[section]
\newtheorem{lemma}{Lemma}[section]
\newtheorem{proposition}{Proposition}[section]
\newtheorem{definition}{Definition}[section]
\newtheorem{remark}{Remark}[section]

\sloppy
\newcommand{\Go}[1]{{G\"{o}del} }
\newcommand{\termDef}[1]{{\textbf{\textit{#1}}}}

\usepackage{color}

\newcommand{\blue}[1]{{\color{blue} #1}}

\newenvironment{proof2}{\trivlist\item[\hskip
       \labelsep{\it Proof:\/}]\ignorespaces }{\hfill$\blacksquare $\endtrivlist}

% ----------------------------------------------------------------

%%%% Para poner un lenguaje
\newcommand{\la}{\langle}
\newcommand{\ra}{\rangle}
 %%%% Es para el dual de la implicación material
 %%%% Es para el dual de la implicación material
 %%%% Es para la equivalencia material
\renewcommand{\to}{\mathop{\rightarrow}} %%%% Es para la implicación estricta
%%% Implicacion estricta con más de una modalidad
  %%% Es para el dual de la implicación estricta
%%% dual impl estricta con más de una modalidad
 %%% Para indicar que existe una bisimulación modal
 %%% Para indicar que existe una quasibisimulación
 %%% Para indicar que existe una bounded bisimulación modal
 %%% Para indicar que existe una bounded quasibisimulación
 %%% Las iniciales de preservan mod-lenguaje
  %%% Las iniciales de preservar si-lenguaje

%%% Para escribir el s{\'\i}mbolo de necesidad con más de una modalidad
%%% Para escribir el s{\'\i}mbolo de posibilidad con más de una modalidad
%%% Para escribir el s{\'\i}mbolo de necesidad
%%% Para escribir el s{\'\i}mbolo de posibilidad

%%% Para referirnos a relaciones binarias

%%% Para el separador de secuentes

%%% Para estructuras (frames)
%%% Para hablar de clases de estructuras
 %%% Para hablar de lógicas modales (normales)
%%% Para álgebras poner \Al o poner \Al[B] o
                        %%%%%%%  \Al[C].
%%% Para las complexity clases

  %%% Para cerrar las structures por substructuras generadas
  %%% Para cerrar las structures por bisimilaridad
\newcommand{\f}{\ensuremath{\varphi}}
\newcommand{\p}{\ensuremath{\psi}}

\newcommand{\bo}{\ensuremath{\Box}}
\newcommand{\di}{\ensuremath{\Diamond}}
\newcommand{\frm}[1]{\mathfrak{#1}}
\newcommand{\m}{\ensuremath{\frm{M}}}
\newcommand{\wh}[1]{\widehat{#1}}
\newcommand{\De}{\mathrm{\Delta}}

\newcommand{\Si}{\mathrm{\Sigma}}

\newcommand{\mdl}[1]{\models_{\lgc{#1}}}

\newcommand{\mfml}{\ensuremath{\mathcal{L}_{\Box \Diamond }(Var)}} 

\newcommand{\lgc}[1]{\mathsf{#1}}
\newcommand{\alg}[1]{\mathbf{#1}}

\newcommand{\gtop}{\top}
\newcommand{\gbot}{\bot}

%----------------------------------------------------------------------------------------

\begin{document}

\date{}
\title{Simplified Kripke semantics for K45-like G\"odel modal logics and its axiomatic extensions}
\author{Ricardo Oscar Rodriguez\inst{1} \and  Olim Frits Tuyt \inst{2} \and Francesc Esteva\inst{3} \and Llu\'{\i}s Godo\inst{3}} 

\institute{Departamento de Computaci\'on, FCEyN - UBA, Argentina \\
\email{ricardo@dc.uba.ar}\\
\and
Mathematical Institute, University of Bern, Switzerland \\
\email{olim.tuyt@math.unibe.ch} 
\and 
Artificial Intelligence Research Institute, IIIA - CSIC, Bellaterra, Spain \ \email{\{esteva,godo\}@iiia.csic.es}
}

\authorrunning{} %Esteva, Godo, Rodriguez and Tuyt}
\titlerunning{} %Generalized Possibilistic G\"odel logic}

\maketitle

%\begin{abstract}
%In this paper we  provide a simplified semantics for a generalized possibilistic G\"odel logic, i.e. the many-valued G\"odel counterpart  of generalized possibilistic logic (this kind of generalization is different to a given one in \cite{DPS2017}). More precisely, we characterize a fuzzy version of $PL$ as the set of valid formulae under the class of sub-normalized possibilistic G\"odel Kripke Frames $\langle W, \pi \rangle$, where $W$ is a non-empty set of worlds and $\pi$ is a sub-normalized possibility distribution on $W$.
%\end{abstract}

\section{Introduction}\label{intro}
{\em Possibilistic logic} \cite{DuLaPr,DuPr2} is a well-known uncertainty logic  to reason with graded (epistemic) beliefs on classical propositions by means of necessity and possiblity measures.
% Possibilistic logic deals with weighted formulas $(\varphi, \alpha)$, where $\varphi$ is a classical proposition and $\alpha$ is a weight, interpreted as a lower bound for the necessity degree of $\varphi$.
In this setting, epistemic states of an agent are represented by possibility distributions. If $W$ is a set of classical evaluations or possible worlds, for a given propositional language, a {\em normalized possibility distribution} on $W$ is a mapping  $\pi: W  \to [0, 1]$, with $\sup_{w \in W} \pi(w) = 1$. This map $\pi$ ranks interpretations according to its plausibility level: $\pi(w) = 0$ means that $w$ is rejected, $\pi(w) = 1$ means that $w$ is fully plausible, while $\pi(w) < \pi(w')$ means that $w'$ is more plausible than $w$.
 A possibility distribution $\pi$ induces a pair of dual possibility and necessity measures on propositions, defined respectively as:

\begin{center}
$\Pi(\varphi) = \sup\{ \pi(w) \mid w \in W , w(\varphi) = 1 \}$ \\
$N(\varphi) = \inf\{ 1- \pi(w) \mid w \in W , w(\varphi) = 0\}$
\end{center}

$N(\varphi)$ measures to what degree $\varphi$ can be considered certain given the given epistemic, while $\Pi(\varphi)$ measures the degree in which $\varphi$ is plausible or possible.  Both measures are dual in the sense that $\Pi(\varphi) = 1 - N(\neg \varphi)$, so that the degree of possibility of a proposition $\varphi$ equates the degree in which $\neg \varphi$ is not certain.
If the normalized condition over possibility distribution is dropped, then we gain the ability to deal with inconsistency. In \cite{DyP2015}, a possibility distribution which satisfies $\sup_{w \in W } \pi(w) < 1$ is called sub-normal. In this case, given a set $W$ of classical interpretations,  a degree of inconsistency can be defined in the following way:
$$ inc(W) = 1 - \sup_{w \in W } \pi(w)$$
When the normalised possibility distribution $\pi$ is $\{0, 1\}$-valued, i.e. when $\pi$ is the characteristic function of a subset $\emptyset \neq E \subseteq W$, then the structure $\langle W, \pi \rangle$, or better $\langle W, E \rangle$, can be seen in fact as a K45 frame. Indeed, it is folklore that modal logic K45, which is sound and complete w.r.t. the class of Kripke frames $\langle W, R \rangle$ where $R$ is a euclidean and transitive binary relation, also has a simplified semantics given by the subclass of frames $\langle W, E 
\rangle$, where $E$ is 
a non-empty subset of $W$ (understanding $E$ as its corresponding binary relation $R_E$ defined as $R_E(w, w')$ iff $w' \in E$).

When we go beyond the classical framework of Boolean algebras of events to many-valued frameworks,  one has to come up with appropriate extensions 
of the notion of necessity and possibility measures for many-valued events \cite{DeGoMa}.

In the particular context of G\"odel fuzzy logic \cite{Hajek98}, natural generalizations that we will consider in this paper are the following.
A possibility distribution is as before a function $\pi \colon W \to [0, 1]$ but now,  $W$ is a set of G\"odel propositional interpretations that induces the 
following generalized possibility and necessity measures over G\"odel logic propositions:
$$\Pi(\varphi) = \sup_{w \in W} \{ \min(\pi(w),  w(\varphi)) \}$$ \vspace{-4mm}
$$N( \varphi) = \inf_{w \in W}  \{\pi(w) \Rightarrow w(\varphi)  \},$$
where $\Rightarrow$ is G\"odel implication, that is, for each $x, y \in [0, 1]$, $x \Rightarrow y = 1$ if $x \leq y$,  $x \Rightarrow y = y$, otherwise.\footnote{Strictly speaking, the possibility measure is indeed a generalization of the classical one, but the necessity measure is not, since $x \Rightarrow 0 \neq 1-x$.}
These expressions agree with the ones commonly used in many-valued modal Kripke frames $\langle W, R \rangle$ to respectively evaluate modal formulas $\Diamond \varphi$ and $\Box \varphi$ (see for example \cite{BouJLC} and references 
therein) when the $[0, 1]$-valued accessibility relation $R: W \times W \to [0, 1]$
is  replaced by a possibility distribution $\pi: W \to [0, 1]$ as $R(w,w') = \pi(w')$, for any $w, w' \in W$.

Actually, modal extensions of G\"odel fuzzy logic have been studied by Caicedo and Rodr\'iguez \cite{CaiRod2015}, providing sound and complete axiomatizations for different classes of
$[0, 1]$-valued Kripke models. These structures are triples $\m = \langle W, R, 
e \rangle$, where $W$ is a set of worlds, $R $ is a $[0, 1]$-valued accessibility relation, as above, and $e: W \times Var \to [0, 1]$ is such that, for every $w \in W$, $e(w, \cdot)$  is a G\"odel $[0,1]$-valued evaluation of 
propositional variables (more details in next section) that extends to modal formulas as follows:
\begin{align*}
e(w,\Diamond \varphi ) &= \sup_{w'\in W}\{\min(R(w,w'), e(w',\varphi ))\} \\
e(w,\Box \varphi )	&= \inf_{w'\in W}\{R(w,w')\Rightarrow e(w',\varphi )\}.
\end{align*}

We will denote by ${\cal K}45(\mathbf{G})$ the class of $[0, 1]$-models $\m =  \langle W, R, e \rangle$ where $R$ satisfies the following many-valued counterpart of the classical properties:

\begin{itemize}
%\item{\em Seriality}: $\forall w \in W$, $\sup_{w' \in W} R(w, w') = 1$.
\item {\em Transitivity}: $\forall w, w', w'' \in W$,  $\min(R(w, w'), R(w', w'')) \leq R(w, w'')$
\item {\em Euclidean}: $\forall w, w', w'' \in W$,  $\min(R(w, w'), R(w, w'')) \leq R(w', w'')$
\end{itemize}

In this setting,  the class $\Pi\mathcal{G}$ of {\em possibilistic Kripke 
models} $\langle W, \pi, e \rangle$, where $\pi \colon W \to [0, 1]$ is a possibility distribution (not necessarily normalized) on the set of worlds $W$, can be considered as the subclass of ${\cal K}45(\mathbf{G})$ models $\langle W, R, e \rangle$ where $R$ is independent of the world in its first argument, in the sense that $R(w, w') = \pi(w')$.  Since  $\Pi\mathcal{G} \subsetneq {\cal K}45(\mathbf{G})$, it follows that the set $Val({\cal K}45(\mathbf{G}))$ of valid formulas in the class of 
${\cal K}45(\mathbf{G})$ is a subset of the set $Val(\Pi\mathcal{G} )$ of valid formulas in the class $\Pi\mathcal{G}$, i.e. $Val({\cal K}45(\mathbf{G})) \subseteq Val(\Pi\mathcal{G} )$. The interesting question is therefore whether the converse inclusion  $Val(\Pi\mathcal{G} ) \subseteq Val({\cal K}45(\mathbf{G}))$ holds, and thus whether the class $\Pi\mathcal{G}$ provides a simplified possibilistic semantics for the modal logic $K45(\mathbf{G})$.

%In the setting of many-valued modal frameworks over G\"odel logic, 
In \cite{BEGR16} the authors claim to provide a positive answer, not for the class ${\cal K}45(\mathbf{G}))$ models but for the subclass of ${\cal KD}45(\mathbf{G}))$ models, i.e.\ those   ${\cal K}45(\mathbf{G}))$ models $\m = \langle W, R, e \rangle$ further satisfying the many-valued counterpart of seriality:
\begin{itemize}
\item{\em Seriality}: $\forall w \in W$, $\sup_{w' \in W} R(w, w') = 1$
\end{itemize}
Indeed, they prove that the logic $KD45(\mathbf{G})$, complete w.r.t. the 
class of  ${\cal KD}45(\mathbf{G}))$ models, is also complete w.r.t. the class $\Pi^*\mathcal{G}$ of possibilistic models $\langle W, \pi, e \rangle$, where $\pi: W \to [0, 1]$ is a {\em normalized} possibility distribution on $W$. 
%
%, in the sense of providing a simplified possibilistic semantics for the logic $KD45(\mathbf{G})$ defined by the class of many-valued Kripke models with a many-valued accessibility relation satisfying counterparts of the serial, euclidean and transitive relations. 
Unfortunately, it has to be noted that the completeness proof in \cite{BEGR16} has some flaws. 
%, as reported by Tuyt.\footnote{Personal communication.} 
In this paper we provide a correct proof, not only for the completeness of $KD45(\mathbf{G})$ w.r.t. to its corresponding class of possibilistic frames, but also for the weaker logic $K45(\mathbf{G})$ accounting for partially inconsistent possibilistic Kripke frames. %\vspace{-2mm}

%In this presentation, we focus on generalized possibilistic semantics over G\"odel evaluations. We are going to show that this extended semantics 

{This paper is organized as follows: Section~\ref{sec:preliminaries} introduces the main notions about minimum propositional modal G\"odel logic  and its relational semantics;
Section~\ref{sec:K45logic} then formally introduces a calculus for the logic $K45(G)$, some of its extensions and several of its main theorems;
Section~\ref{sec:possibilitysem} presents our simplified possibilistic semantics and the results of completeness; 
Section ~\ref{sec:decidability} is devoted to analyze the finite model property for the new semantics;
finally, Section~\ref{sec:Disc-concl} provides some conclusions.}

\section{Preliminaries on propositional and modal G\"odel fuzzy logic} \label{sec:preliminaries}

This section is devoted to preliminaries on the G\"odel fuzzy logic G. We present their syntax and semantics, their main logical properties and the notation we use throughout the article.

The language of G\"odel propositional logic $\mathcal{L}(V)$ is built as usual from a countable set of propositional variables $V$, the constant $\gbot$ and the binary connectives $\land$ (conjunction) and $\to$ (implication). 
%\olim{I have added macros \texttt{\textbackslash gbot} and \texttt{\textbackslash gtop} for constants of falsity and truth, respectively, as I noticed some inconsistency: sometimes it's $\overline{0}$, sometimes it's $\bot$. We can now pick either at the start of the document.}
%Disjunction and negation are respectively  defined as $\varphi \lor \psi := ((\varphi \to \psi) \to \psi) \land  ((\psi \to \varphi) \to \varphi)$ and as $\neg \varphi := \varphi \to \overline{0}$, and the constant $\overline{1}$ is taken as $\overline{0} \to \overline{0}$.
%
%The language of the G\"odel Logic $\cal{G}$ is built in the usual way from a contable set of proposition variables $X$, a conjunction $\wedge$, an implication $\to$ and the constant $\gbot$. 
Further connectives are defined as follows: 
\begin{eqnarray*}
\gtop & \mbox{is} & \gbot \rightarrow \gbot \\
\varphi \vee \psi & \mbox{is} & ((\varphi \rightarrow \psi)\rightarrow
\psi)
\wedge ((\psi \rightarrow \varphi )\rightarrow \varphi),\\
\lnot \varphi & \mbox{is} & \varphi \rightarrow \gbot,\\
\varphi \equiv \psi & \mbox{is} & (\varphi \rightarrow \psi) \wedge (\psi
\rightarrow
\varphi).
\end{eqnarray*}
The following are the {\em axioms\/} of $G$: \\

\begin{tabular}{l l}
(A1) &$(\varphi \rightarrow \psi) \rightarrow ((\psi \rightarrow
\chi) \rightarrow (\varphi \rightarrow \chi))$
\\
(A2) &$(\varphi \wedge \psi) \rightarrow \varphi$
\\
(A3) &$(\varphi \wedge \psi) \rightarrow (\psi \wedge \varphi)$
\\
(A4a) &$(\varphi \rightarrow (\psi \rightarrow \chi ))
\rightarrow ((\varphi \wedge \psi) \rightarrow \chi)$
\\
(A4b) &$((\varphi \wedge \psi) \rightarrow \chi) \rightarrow
(\varphi\rightarrow (\psi \rightarrow \chi))$
\\
(A6) & $\varphi \to (\varphi \wedge \varphi)$
\\
(A7) & $((\varphi \rightarrow \psi) \rightarrow \chi) \rightarrow
 (((\psi \rightarrow \varphi ) \rightarrow
\chi) \rightarrow \chi)$
\\
(A8) &$\gbot \rightarrow \varphi$
\\
\end{tabular}
\ \\

\noindent The {\em deduction rule\/} of $G$ is modus ponens. \\

G\"odel logic can be obtained in fact as the axiomatic extension of H\'ajek's Basic Fuzzy Logic BL \cite{Hajek98} (which is the logic of continuous t-norms and their residua)  by means of the contraction axiom (A6) $\varphi \to (\varphi \land \varphi)$. Since the unique idempotent continuous t-norm is the minimum, this yields that G\"odel logic is strongly complete with respect to its standard fuzzy semantics that interprets formulas over the structure $[0, 1]_\mathrm{G} := \langle [0, 1], \min, \Rightarrow_\mathrm{G}, 0, 1 \rangle$\footnote{Called standard G\"odel algebra.}, i.e. semantics defined by truth-evaluations $e$ such that $e(\varphi \land \psi) = \min(e(\varphi), e(\psi))$,  $e(\varphi \to \psi) = e(\varphi) \Rightarrow_\mathrm{G} e(\psi)$ and $e(\gbot) = 0$.

G\"odel logic can also be seen as the axiomatic extension of intuitionistic propositional logic  by the prelinearity axiom $(\varphi \to \psi) \lor (\psi \to \varphi)$.  Its algebraic semantics is therefore given by the variety of prelinear Heyting algebras, also known as G\"odel algebras. In fact,  it is sound and complete in the following stronger sense, see \cite{CaiRod2010}.

\begin{proposition} \label{ordersoundness}
\begin{itemize}
\item[i)] If $T\cup \{\varphi \}\subseteq \mathcal{L}(X),$
then $T\vdash _G \varphi $ implies $\inf v(T)\leq v(\varphi )$
for any valuation $v:X\rightarrow \lbrack 0,1]$.

\item[ii)]If $T$ is countable,
and $T\nvdash _ G \varphi _{i_{1}}\vee ..\vee \varphi _{i_{n}}$
for each finite subset of a countable family $\{\varphi _{i}\}_{i\in I}$ there is
an evaluation $v:\mathcal{L}(X) \rightarrow \lbrack 0,1]$\ such that $v(\theta )=1$\ for all
$\theta \in T$\ and $v(\varphi _{i})<1$ for all $i \in I$.

\end{itemize}
\end{proposition}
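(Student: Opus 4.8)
The plan is to treat the two parts separately, proving (i) by induction on derivations and (ii) by a Lindenbaum-style construction of a linearly ordered theory.

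For (i), I would argue by induction on the length of a derivation of $\varphi$ from $T$. Write $a = \inf v(T)$. If $\varphi$ is an axiom of $G$, then $v(\varphi)=1$ (each axiom evaluates to $1$ in $[0,1]_\mathrm{G}$), so $a \le v(\varphi)$; if $\varphi \in T$, then $v(\varphi) \ge a$ by definition of the infimum. The only deduction rule is modus ponens, so suppose $\varphi$ follows from $\psi$ and $\psi \to \varphi$. By the induction hypothesis $a \le v(\psi)$ and $a \le v(\psi \to \varphi) = v(\psi) \Rightarrow v(\varphi)$. The desired inequality $a \le v(\varphi)$ then follows from the elementary fact that in $[0,1]_\mathrm{G}$, the conjunction of $a \le x$ and $a \le (x \Rightarrow y)$ implies $a \le y$ (check the two cases $x \le y$ and $x > y$ in the definition of $\Rightarrow$). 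This closes the induction.

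For (ii), I would first extend $T$ to a prime (linear) theory. Fixing an enumeration of all pairs of formulas of the countable language, I would build an increasing chain $T = T_0 \subseteq T_1 \subseteq \cdots$ maintaining the invariant that $T_n \nvdash \varphi_{j_1} \lor \cdots \lor \varphi_{j_m}$ for every finite subfamily. At the step handling a pair $(\alpha,\beta)$ I would add to the current theory whichever of $\alpha \to \beta$ or $\beta \to \alpha$ preserves the invariant. The crucial point is that at least one choice always works: if both failed, the deduction theorem would give $T_n \vdash (\alpha\to\beta) \to D_1$ and $T_n \vdash (\beta\to\alpha) \to D_2$ for two finite disjunctions $D_1, D_2$ of the $\varphi_i$, and then proof by cases applied to the prelinearity theorem $(\alpha\to\beta)\lor(\beta\to\alpha)$ would yield $T_n \vdash D_1 \lor D_2$, contradicting the invariant (note $D_1\lor D_2$ is again a finite disjunction of the $\varphi_i$). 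Setting $T^* = \bigcup_n T_n$, the theory $T^*$ extends $T$, is linear, and still proves no finite disjunction of the $\varphi_i$; in particular $T^* \nvdash \varphi_i$ for every $i$.

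It then remains to read off the evaluation. Since $T^*$ is linear, its Lindenbaum--Tarski algebra $\mathbf{A}^* = \mathcal{L}(X)/{\sim_{T^*}}$ is a countable G\"odel chain, in which $[\theta]=1$ for all $\theta\in T$ and $[\varphi_i]\neq 1$ for all $i$. Any countable bounded chain order-embeds into $[0,1]$ sending bottom to $0$ and top to $1$, and since both $\min$ and $\Rightarrow$ on a chain are completely determined by the order, such an order-embedding is automatically a homomorphism of G\"odel algebras. Composing the quotient map with this embedding yields a valuation $v:\mathcal{L}(X)\to[0,1]$ with $v(\theta)=1$ for $\theta\in T$ and $v(\varphi_i)<1$ for all $i$, as required. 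I expect the main obstacle to be the linear-extension step of (ii): the delicate part is verifying that the non-provability of \emph{all} finite disjunctions is genuinely preserved at each stage, which is exactly where prelinearity and the proof-by-cases principle of G\"odel logic are needed.
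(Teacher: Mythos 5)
Your proof is correct. Note that the paper itself offers no proof of this proposition --- it is quoted as a known result from Caicedo and Rodr\'iguez \cite{CaiRod2010} --- and your argument (soundness by induction on derivations via the two-case check of G\"odel implication; completeness by a Lindenbaum-style linear extension using prelinearity, the deduction theorem and proof by cases, followed by an order-embedding of the countable linearly ordered Lindenbaum--Tarski quotient into $[0,1]$, which is automatically a G\"odel homomorphism on chains) is essentially the standard proof given in that reference.
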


We mention in passing that the  algebraic  semantics of  G\"odel algebra is is  given by the variety of prelinear Heyting algebras, also known as G\"odel algebras. A G\"odel algebra is a structure ${\bf A} = \langle A, *, \Rightarrow, 0, 1 \rangle$ which is a (bounded, integral, commutative) residuated lattice satisfying the contraction equation $$x * x = x,$$ and pre-linearity equation $$(x \Rightarrow y) \lor (y \Rightarrow x) = 1, $$where $x \lor y  = ((x \Rightarrow y) \Rightarrow y) * ((y \Rightarrow x) \Rightarrow x)$). \vspace{5mm}
%G\"odel algebras are locally finite, i.e. given a G\"odel subalgebra $\bf A$ and a finite set $F$ of elements of $\bf A$, the G\"odel subalgebra generated by $F$ is finite as well.

Let us consider a modal expansion of G\"odel logic with two operators $\Box$ and $\Diamond$.  The set of formulas $\mathcal{L}_{\Box \Diamond }(V)$ %of propositional \emph{bi-modal logic}
is built as $\mathcal{L}(V)$ (always assuming countability of the set of propositional variables $V$) but extending the set of operations with two unary symbols $\Box $ and $\Diamond$. Whenever $V$ is clear from the context we will simply write $\mathcal{L}_{\Box \Diamond}$.

In the style introduced by Fitting \cite{Fitting91,Fi92b} and studied in the works mentioned in the introduction, we define the G\"odel Modal Logic as arising from its semantic definition. This is given by enriching usual Kripke models with evaluations over the previous standard algebra, as in \cite{CaiRod2010, CaiRod2015} and others. Formally:

\begin{definition}
	A \termDef{G\"odel-Kripke model}  is a structure $\langle W, R, e\rangle$ where $W$ is a non-empty set of so-called worlds, and $R \colon W \times W \rightarrow[0,1]$ and $e \colon V \times W \rightarrow [0,1]$ are arbitrary mappings.
\end{definition}

The evaluation $e$ can be uniquely extended to a map with domain $W \times \mathcal{L}_{\Box\Diamond}$ in such a way that it is a propositional G\"odel homomorphism (for the propositional connectives) and where the modal operators are interpreted as infima and suprema, namely:
\begin{itemize}
	\item $e(v, \bot) \coloneqq 0$,
	\item $e(v, \varphi \star \psi) \coloneqq e(v, \varphi) \star e(v, \psi)$  for $\star \in \{\wedge, \vee, \rightarrow\}$,
	\item $e(v,\Box \varphi) \coloneqq \bigwedge_{w \in W}( R(v,w) \rightarrow e(w, \varphi)),$
	\item $e(v,\Diamond \varphi) \coloneqq \bigvee_{w \in W}( R(v,w) \wedge e(w, \varphi)).$
\end{itemize}
A formula $\varphi$ is \emph{valid} in a G\"odel-Kripke model $\langle W, R, e\rangle$, if $e(w,\varphi) = 1$ for all $w \in W$. We will denote by $\cal G$ the class of G\"odel Kripke models and will say that a formula $\varphi$ is \emph{$\cal G$-valid}, written $\models_{\mathcal{G}} \varphi$, if $\varphi$ is valid in all  G\"odel Kripke models.

In their paper \cite{CaiRod2015} Caicedo and Rodriguez define the logic $K(\mathbf{G})$ as the smallest set of formulas containing some axiomatic version of G\"{o}del-Dummet
propositional calculus, that is, Heyting calculus plus the prelinearity law, and the following additional axioms and rule:  \\

\begin{tabular}{rlrl}
$(K_\Box)$ & $\Box(\varphi \to \psi) \to (\Box \varphi \to \Box \psi)$ &
$(K_\Diamond)$ & $\Diamond(\varphi \lor \psi) \to (\Diamond\varphi  \lor \Diamond \psi)$\\
$(F_\Box)$ & $\Box \gtop$ &
$(P)$ & $\Box(\varphi \to \psi) \to (\Diamond \varphi \to \Diamond \psi)$\\
$(FS2)$ & $(\Diamond\varphi  \to \Box \psi) \to \Box(\varphi \to \psi)$ \mbox{} \mbox{} \quad &
$(Nec)$ & from $\varphi$ infer $\Box \varphi$ \\
\end{tabular}

\vspace{5mm}
 They denote deduction in this system as  $\vdash _{K(\mathbf{G})}$ and  they show the following result:

\begin{theorem}[{\cite[Theorem 3.1]{CaiRod2015}}]\label{th:compGK}
Let $\varphi \in \mathcal{L}_{\Box\Diamond}$. Then
\[\vdash_{K(\mathbf{G})} \varphi \text{ if and only if }  \models_{\mathcal{G}} \varphi.\]
\end{theorem}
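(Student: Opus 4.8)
The statement is a biconditional, and I would establish the two directions separately. \textbf{Soundness}, that $\vdash_{K(\mathbf{G})}\varphi$ implies $\models_{\mathcal{G}}\varphi$, is the routine direction. Since at every fixed world $w$ the map $e(w,\cdot)$ is by construction a propositional G\"odel homomorphism, validity of the G\"odel--Dummett axioms is inherited from propositional completeness, so it remains only to check the five modal axioms $(K_\Box),(K_\Diamond),(F_\Box),(P),(FS2)$ directly against the clauses $e(w,\Box\psi)=\bigwedge_{v}(R(w,v)\Rightarrow e(v,\psi))$ and $e(w,\Diamond\psi)=\bigvee_{v}(R(w,v)\wedge e(v,\psi))$, and to check that modus ponens and $(Nec)$ preserve the property ``$e(w,\cdot)=1$ at every $w$''. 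The only mildly delicate verifications are $(P)$ and $(FS2)$, which couple $\Box$ and $\Diamond$ and reduce to elementary inequalities between $\bigwedge(R\Rightarrow\cdot)$ and $\bigvee(R\wedge\cdot)$ obtained from the residuation law for the G\"odel implication.

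The substance is \textbf{completeness}, that $\models_{\mathcal{G}}\varphi$ implies $\vdash_{K(\mathbf{G})}\varphi$, which I would prove in contrapositive form by a canonical-model construction. I would take $W$ to be the set of all G\"odel evaluations $v\colon\mathcal{L}_{\Box\Diamond}\to[0,1]$ (homomorphisms for the propositional connectives) that send every theorem of $K(\mathbf{G})$ to $1$. By Proposition~\ref{ordersoundness}(ii), applied to the theory of $K(\mathbf{G})$-theorems, this set is non-empty and, more importantly, separates non-theorems: if $\not\vdash_{K(\mathbf{G})}\varphi$ then some $v\in W$ has $v(\varphi)<1$. The canonical accessibility relation is
$$R(u,v)=\min\Big(\inf_{\psi}\big(u(\Box\psi)\Rightarrow v(\psi)\big),\ \inf_{\psi}\big(v(\psi)\Rightarrow u(\Diamond\psi)\big)\Big),$$
designed so that both modal clauses have a chance of being satisfied. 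The goal is the \emph{truth lemma}, $e(v,\psi)=v(\psi)$ for every $v\in W$ and every formula $\psi$, proved by induction on $\psi$; granting it, the separating evaluation shows that $\models_{\mathcal{G}}\varphi$ fails, which is exactly the contrapositive sought.

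In the truth lemma the only nontrivial cases are $\Box$ and $\Diamond$, and for each of them one inequality is immediate from the definition of $R$. Indeed $R(u,v)\le u(\Box\psi)\Rightarrow v(\psi)$ gives, by residuation, $\min(R(u,v),u(\Box\psi))\le v(\psi)$ and hence $u(\Box\psi)\le R(u,v)\Rightarrow v(\psi)$ for all $v$, so $u(\Box\psi)\le e(u,\Box\psi)$; dually, $R(u,v)\le v(\psi)\Rightarrow u(\Diamond\psi)$ yields $R(u,v)\wedge v(\psi)\le u(\Diamond\psi)$ and thus $e(u,\Diamond\psi)\le u(\Diamond\psi)$. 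The reverse inequalities are the content of an \emph{existence (saturation) lemma}: for each $u$ and $\psi$ one must exhibit worlds $v$ for which $R(u,v)\Rightarrow v(\psi)$ approaches $u(\Box\psi)$ from above, and worlds $v$ for which $R(u,v)\wedge v(\psi)$ approaches $u(\Diamond\psi)$ from below. Concretely, I would specify such a $v$ as a solution of a system of order constraints read off from $u$ --- making $v$ ``as accessible as the definition of $R$ allows'' while pinning $v(\psi)$ to the required value --- and then realize this system as an actual evaluation through Proposition~\ref{ordersoundness}(ii).

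I expect the real obstacle to be precisely this saturation step, because a \emph{single} $[0,1]$-valued relation $R(u,v)$ must serve both modalities at once: the successor witnessing the $\Box$-value and the one witnessing the $\Diamond$-value are constrained by the same accessibility data, so the order constraints arising from the $\Box$-part and from the $\Diamond$-part of $R$ have to be shown jointly $K(\mathbf{G})$-consistent. In the Boolean case this coherence is automatic, whereas here it is exactly what the Fischer--Servi axioms secure: $(FS2)$ turns a simultaneous ``$\Diamond\varphi$ small / $\Box\psi$ large'' demand into a bound on $\Box(\varphi\to\psi)$, while $(P)$ together with $(K_\Box)$ and $(K_\Diamond)$ transports $\Box$-provable implications across $\Diamond$, keeping the constraint set defining each witness consistent. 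A further technical wrinkle is that, the G\"odel semantics being order-based, the relevant infima and suprema need not be attained, so the witnesses are best produced as a countable family approaching the target value, once again by Proposition~\ref{ordersoundness}(ii). Once the saturation lemma is established the induction closes, and with it the theorem.
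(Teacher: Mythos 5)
The paper does not actually prove this theorem: it is imported by citation from \cite[Theorem 3.1]{CaiRod2015}, so the only proof to compare against is the one in that reference (which the present paper then adapts for $K45(\mathbf{G})$ in Section~\ref{sec:possibilitysem} and the appendix). Your proposal correctly reconstructs exactly that argument --- the same treatment of modal formulas as extra atoms in $Var\cup X$, the same canonical relation $R(u,v)=\inf_\theta \min\big(u(\Box\theta)\Rightarrow v(\theta),\ v(\theta)\Rightarrow u(\Diamond\theta)\big)$, the same split of the truth lemma into residuation-immediate inequalities plus a saturation step realized by applying Proposition~\ref{ordersoundness}(ii) to order-constraint theories whose consistency is secured by $(FS2)$, $(P)$ and $(K_\Box)$, including the need for approximating witness families since infima and suprema need not be attained --- so it is essentially the same approach, not a different route.
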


\vspace{0.2cm} 
% \blue{Lluis: shouldn't we warn that (Nec) only applies to theorems?}

\noindent Proofs with assumptions are allowed with the restriction that $(Nec)$ may be applied only when the premise is a
theorem. 
This restriction allows for the following convenient reduction (see \cite{CaiRod2010}).\medskip

\begin{lemma}
{\label{reduction} } Let $ThK(\mathbf{G})$ be the set of theorems of $K(\mathbf{G})$ with no assumptions, then for any theory $T$ and formula $\varphi $ in $\mathcal{L}_{\Box \Diamond
}:T\vdash _{K(\mathbf{G})}\varphi $\ if\ and\ only\ if\ $ T\cup ThK(G)\vdash _{\mathbf{G}}\varphi $ where  $\vdash_{\mathbf{G}}$ denotes deduction in G\"odel fuzzy logic G.
\end{lemma}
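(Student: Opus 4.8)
The plan is to prove both inclusions by induction on the length of the relevant derivations, exploiting the fact that $\vdash_{\mathbf{G}}$ is a purely propositional consequence relation: it has modus ponens as its only rule and the G\"odel axioms (A1)--(A8) as its only logical axioms, treating boxed and diamonded subformulas as opaque atoms. The essential point is that $ThK(\mathbf{G})$ is designed precisely to absorb every use of the modal axioms and of the $(Nec)$ rule, so that once these are moved into the premise set, nothing but G\"odel-style propositional reasoning remains.

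For the left-to-right direction I would assume a $K(\mathbf{G})$-derivation of $\varphi$ from $T$ and argue by induction on its length that $T \cup ThK(\mathbf{G}) \vdash_{\mathbf{G}} \varphi$. The base and easy cases are immediate: if $\varphi$ is a G\"odel-Dummett axiom it is already a $\mathbf{G}$-axiom; if $\varphi$ is one of the modal axioms $(K_\Box),(K_\Diamond),(F_\Box),(P),(FS2)$ it is a theorem of $K(\mathbf{G})$, hence a member of $ThK(\mathbf{G})$ and available as a premise; if $\varphi \in T$ it is a premise directly; and a modus ponens step is matched by a modus ponens step in $\mathbf{G}$ via the induction hypothesis. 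The crux is the $(Nec)$ step, where $\varphi = \Box\psi$ is inferred from $\psi$. Here I would invoke the stated restriction that $(Nec)$ is applied only when its premise is a theorem with no assumptions: thus $\vdash_{K(\mathbf{G})}\psi$, so applying $(Nec)$ gives $\vdash_{K(\mathbf{G})}\Box\psi$, whence $\Box\psi \in ThK(\mathbf{G})$ and $\varphi$ is again available as a premise for $\vdash_{\mathbf{G}}$. This case is the only genuine obstacle, and it is exactly why the restriction on $(Nec)$ is needed; without it the induction hypothesis would not guarantee that the necessitated formula is a $K(\mathbf{G})$-theorem.

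The right-to-left direction is the routine one. Assuming a $\mathbf{G}$-derivation of $\varphi$ from $T \cup ThK(\mathbf{G})$, I would show by induction on its length that $T \vdash_{K(\mathbf{G})} \varphi$. Every G\"odel axiom used is also an axiom of $K(\mathbf{G})$ (which extends the G\"odel-Dummett calculus), every element of $T$ is a premise for $\vdash_{K(\mathbf{G})}$, and every element of $ThK(\mathbf{G})$ is by definition a $K(\mathbf{G})$-theorem and hence derivable from $T$ in $K(\mathbf{G})$. Since modus ponens is a rule of $K(\mathbf{G})$ as well, each derivation step transfers, and no appeal to $(Nec)$ is ever required in this direction. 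Combining the two inductions yields the biconditional.

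\begin{proof2}
Immediate from the double induction sketched above.
\end{proof2}
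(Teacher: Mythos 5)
Your proof is correct: the double induction on derivation length, with the $(Nec)$ case discharged by the paper's stated restriction that necessitation may only be applied to theorems (so that $\Box\psi \in ThK(\mathbf{G})$ becomes available as a premise), is exactly the standard argument for this reduction, and the converse direction transfers routinely since every member of $ThK(\mathbf{G})$ is $K(\mathbf{G})$-derivable. Note that the paper itself gives no proof of this lemma, deferring to \cite{CaiRod2010}; your argument is precisely the one that reference supplies, so there is nothing to compare beyond observing that you have filled in the omitted details correctly.
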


%We will need the fact that it is actually sound and complete in the following stronger sense, see \cite{CaiRod2010}.
%
%\begin{proposition} \label{ordersoundness}
%\begin{itemize}
%\item[i)] If $T\cup \{\varphi \}\subseteq \mathcal{L}(X),$
%then $T\vdash _G \varphi $ implies $\inf v(T)\leq v(\varphi )$
%for any valuation $v:X\rightarrow \lbrack 0,1]$.
%
%\item[ii)]If $T$ is countable,
%and $T\nvdash _ G \varphi _{i_{1}}\vee ..\vee \varphi _{i_{n}}$
%for each finite subset of a countable family $\{\varphi _{i}\}_{i\in I}$ there is
%an evaluation $v:\mathcal{L}(X) \rightarrow \lbrack 0,1]$\ such that $v(\theta )=1$\ for all
%$\theta \in T$\ and $v(\varphi _{i})<1$ for all $i \in I$.
%
%\end{itemize}
%\end{proposition}

\noindent The following are some theorems of $K(\mathbf{G})$,  see \cite{CaiRod2015}:  \vspace{2mm}

$%
\begin{array}{rlrl}
T1. & \lnot \Diamond \theta \leftrightarrow \square \lnot \theta &  \\ 
T2. & \lnot \lnot \square \theta \rightarrow \square \lnot \lnot \theta & 
\\ 
T3. & \Diamond \lnot \lnot \varphi \rightarrow \lnot \lnot \Diamond \varphi &  \\
T4. & (\square \varphi \rightarrow \Diamond \psi )\vee \square ((\varphi \rightarrow \psi )\rightarrow \psi )\\
T5. & \Diamond (\varphi \to \psi) \to (\square \varphi \to \Diamond \psi)
\\
\end{array}
$ \vspace{2mm}

\noindent The first one is an axiom in Fitting's systems in \cite{Fitting91}, the next two were
introduced in \cite{CaiRod2015}, the fourth one will be useful in our completeness
proof and is the only one depending on prelinearity. The last is known as 
the first connecting axiom given by Fischer Servi.  
{In addition, it is interesting to notice that the following rule is derivable:\vspace{2mm}  

$
\begin{array}{rl}
(Nec_\Diamond) & \mbox{from } \varphi \to \psi  \mbox{ infer } \Diamond \varphi \to \Diamond \psi \\
\end{array}
$ \vspace{2mm}

\noindent Indeed,  if $\vdash _{K(\mathbf{G})} \varphi \to \psi$, then  $\vdash _{K(\mathbf{G})} \Box(\varphi \to \psi)$ by $Nec$, and by using axiom $(P)$ and modus ponens we  get $\vdash _{K(\mathbf{G})} \Diamond \varphi \to \Diamond \psi$. 
}

In the next section, we will focus on an extension of $K(\mathbf{G})$ for which we are able to simplify the G\"odel Kripke semantics introduced in this section.

\section{The logic $K45(\mathbf{G})$} \label{sec:K45logic}
 
\noindent  The logic $K45(\mathbf{G})$ is defined by adding to  $K(\mathbf{G})$ the following axioms: \vspace{0.1cm}

\begin{tabular}{ll@{\qquad}ll}
\hspace{2mm} $(4_\Box)$& $\Box \varphi \to \Box \Box \varphi$ & \hspace{2.3cm} $(4_\Diamond)$& $\Diamond\Diamond \varphi \to  \Diamond \varphi$ \\
\hspace{2mm} $(5_\Box)$&  $\Diamond\Box \varphi \to  \Box \varphi$ &\hspace{2.3cm} $(5_\Diamond)$& $\Diamond \varphi \to \Box \Diamond \varphi$ \\
\end{tabular}
\vspace{0.2cm}

\noindent We define the logic $KD45(\mathbf{G})$ by adding to $K45(\mathbf{G})$ the following axiom: % \blue{cual ponemos?}\vspace{0.1cm}

\begin{tabular}{rl@{\qquad}ll}
%\hspace{2mm} $(D1)$& $\Box \varphi \to \Diamond \varphi$ & \ \\
\hspace{2mm} $(D)$ &  \blue{$\Diamond \gtop$} & \ \\
\end{tabular} 
\vspace{0.2cm}
%\noindent \blue{ambos son equivalentes:  (D1) implica (D2) tomando $\varphi = g\top$; (D2) implica (D1) tomando $T = \varphi \to \varphi$ y usando T5 de abajo} \\
%\subsection{More about $K45(\mathbf{G})$}

\noindent and $KT45(\mathbf{G})$ if we add the following axioms to  $K45(\mathbf{G})$: \vspace{0.1cm}

\begin{tabular}{ll@{\qquad}lll}
\hspace{2mm} $(T_\Box)$& $\Box \varphi \to \varphi$ & \hspace{2,8cm} $(T_\Diamond)$& $\varphi \to \Diamond \varphi$ \\
\end{tabular}
\vspace{0.2cm}

On the other hand, the following is a theorem of $KD45(\mathbf{G})$:  \vspace{2mm}

$
\begin{array}{rl}
(D') & \Box \varphi \to \Diamond \varphi  \\ 
\end{array}
$  \vspace{2mm}

\noindent Indeed, we can replace $\gtop$  by $\varphi \to \varphi$ in Axiom (D) and then use T5. In fact, (D) and (D') are interderivable in $K(\mathbf{G})$, that is,  we have both $ \Diamond \gtop  \vdash_{K(\mathbf{G})} 
\Box \varphi \to \Diamond \varphi $ and $\Box \varphi \to \Diamond \varphi  \vdash_{K(\mathbf{G})} \Diamond \gtop$, the latter holding by first instantiating (D') with $\varphi = \gtop$ and getting $\Box \gtop \to \Diamond \gtop$,  and then using that $\Box \gtop$ is an axiom of $K(\mathbf{G})$.

Next we show that in $K45(\mathbf{G})$  some iterated modalities can be simplified. This is in accordance with our intended simplified semantics for $K45(\mathbf{G})$ that will be formally introduced in the next section.

\begin{proposition} \label{simplif}
The logic $K45(\mathbf{G})$ proves the following schemes:
\vspace{0.2cm}

$
\begin{array}{llll}
\hspace{2mm} ({F}_{\Diamond\Box}) & \Diamond\Box\gtop \leftrightarrow \Diamond \gtop & 
\hspace{2cm} {(G45)} & 
% ({F}_{\Box\Diamond}) & \Box \Diamond\gtop \leftrightarrow \Box \gtop
{(\bo \varphi \to \di \psi) \to \bo (\bo \varphi \to \di \psi)} \\
\hspace{2mm} ({U}_\Diamond)  &  \Diamond\Diamond  \varphi \leftrightarrow 
 \Diamond  \varphi & \hspace{2cm} ({U}_\Box) & \square \square\varphi \leftrightarrow \square\varphi  \\
\hspace{2mm} (T4_\Box) &  (\Box \varphi \to \Diamond \Box\varphi) \vee \Box \varphi & \hspace{2cm} (T4_\Diamond)   & (\Box\Diamond \varphi \to \Diamond \varphi) \vee \Box\Diamond \varphi \\
\hspace{2mm} (Sk_\Diamond) & (\Diamond \gtop \to \Diamond \varphi) \leftrightarrow \Box\Diamond \varphi &  \hspace{2cm} (T4'_\Diamond)   & (\Box\Diamond \varphi \to \Diamond \varphi) \vee  (\Diamond \gtop \to \Diamond \varphi)

\end{array}$\\

\end{proposition}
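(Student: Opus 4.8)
The plan is to prove the eight schemes in a dependency order, establishing the two ``collapse'' equivalences $(U_\Box)$ and $(U_\Diamond)$ first and then reading off the rest from these together with the $K(\mathbf{G})$-theorem $T4$. For $(U_\Box)$ the only nontrivial half is $\Box\Box\varphi\to\Box\varphi$ (the converse is $(4_\Box)$). The key move is to instantiate $(FS2)$, namely $(\Diamond\alpha\to\Box\beta)\to\Box(\alpha\to\beta)$, at $\alpha=\Box\varphi$, $\beta=\varphi$: its antecedent becomes $\Diamond\Box\varphi\to\Box\varphi$, which is precisely $(5_\Box)$, so modus ponens delivers the theorem $\Box(\Box\varphi\to\varphi)$, and feeding this into $(K_\Box)$ yields $\Box\Box\varphi\to\Box\varphi$. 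The scheme $(U_\Diamond)$ is dual: its nontrivial half $\Diamond\varphi\to\Diamond\Diamond\varphi$ comes from instantiating $(FS2)$ at $\alpha=\varphi$, $\beta=\Diamond\varphi$, whose antecedent is now $(5_\Diamond)$, giving $\Box(\varphi\to\Diamond\varphi)$; axiom $(P)$ then turns this into $\Diamond\varphi\to\Diamond\Diamond\varphi$, while $(4_\Diamond)$ supplies the converse.

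The remaining ``positive'' schemes are quick. For $(G45)$ I would first derive the auxiliary $(\Box\varphi\to\Diamond\psi)\to(\Diamond\Box\varphi\to\Box\Diamond\psi)$ by chaining $(5_\Box)$, the antecedent $\Box\varphi\to\Diamond\psi$, and $(5_\Diamond)$, and then compose with the instance $(\Diamond\Box\varphi\to\Box\Diamond\psi)\to\Box(\Box\varphi\to\Diamond\psi)$ of $(FS2)$; this gives $(G45)$ directly. The biconditional $(F_{\Diamond\Box})$ is immediate from $(Nec_\Diamond)$: since $\Box\gtop$ is a theorem by $(F_\Box)$ and $\gtop$ is a theorem, both $\gtop\to\Box\gtop$ and $\Box\gtop\to\gtop$ hold, and applying $(Nec_\Diamond)$ to each gives the two halves $\Diamond\gtop\to\Diamond\Box\gtop$ and $\Diamond\Box\gtop\to\Diamond\gtop$.

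The last four schemes follow from $T4$ plus the collapses. Instantiating $T4$ at $\alpha=\beta=\Box\varphi$ gives $(\Box\Box\varphi\to\Diamond\Box\varphi)\vee\Box\Box\varphi$ (the right disjunct simplifying because $(\Box\varphi\to\Box\varphi)\to\Box\varphi$ is equivalent to $\Box\varphi$); rewriting the left disjunct by $(4_\Box)$ and the right one by $(U_\Box)$ produces $(T4_\Box)$. Symmetrically $T4$ at $\alpha=\beta=\Diamond\varphi$ gives $(\Box\Diamond\varphi\to\Diamond\Diamond\varphi)\vee\Box\Diamond\varphi$, and weakening the left disjunct with $(4_\Diamond)$ yields $(T4_\Diamond)$. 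For $(Sk_\Diamond)$ I prove the two implications separately: right-to-left, $(P)$ at $\alpha=\gtop$, $\beta=\Diamond\varphi$ gives $\Box\Diamond\varphi\to(\Diamond\gtop\to\Diamond\Diamond\varphi)$, and $(4_\Diamond)$ inside the consequent gives $\Box\Diamond\varphi\to(\Diamond\gtop\to\Diamond\varphi)$; left-to-right, chaining the hypothesis $\Diamond\gtop\to\Diamond\varphi$ with $(5_\Diamond)$ gives $\Diamond\gtop\to\Box\Diamond\varphi$, whereupon the $(FS2)$-instance $(\Diamond\gtop\to\Box\Diamond\varphi)\to\Box(\gtop\to\Diamond\varphi)$ yields $\Box\Diamond\varphi$. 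Finally $(T4'_\Diamond)$ is read off from $(T4_\Diamond)$ by using $(Sk_\Diamond)$ to replace the disjunct $\Box\Diamond\varphi$ with the provably equivalent $\Diamond\gtop\to\Diamond\varphi$.

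I expect the main obstacle to be exactly the collapse step for $(U_\Box)$ and $(U_\Diamond)$: recognising that the antecedent of $(FS2)$ can be discharged by the $5$-axioms — syntactically encoding the semantic fact that in a transitive euclidean frame every accessible world is reflexive — is the one real idea, and the rest is bookkeeping. The recurring subtlety is justifying the ``rewrite a subformula inside a disjunction or an implication'' steps used for the $T4$-variants; these rest on the transitivity axiom $(A1)$, on the monotonicity of $\Box$ and $\Diamond$ under provable implication (the former from $(Nec)$ and $(K_\Box)$, the latter being $(Nec_\Diamond)$), and on replacement of provably equivalent subformulas, all of which are standard in G\"odel logic.
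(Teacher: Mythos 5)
Your proposal is correct and follows essentially the same route as the paper's own proof: the collapse directions of $(U_\Box)$ and $(U_\Diamond)$ via the $(FS2)$ instances whose antecedents are exactly $(5_\Box)$ and $(5_\Diamond)$, then $(K_\Box)$ resp.\ $(P)$; $(G45)$ by chaining the $5$-axioms into an $(FS2)$ instance; $(F_{\Diamond\Box})$ from $(F_\Box)$ and $(Nec_\Diamond)$; and the $T4$-variants and $(Sk_\Diamond)$, $(T4'_\Diamond)$ obtained by the same instantiations of $T4$, $(P)$, $(FS2)$ and the collapse equivalences. The only differences are cosmetic (ordering of the schemes and whether the left disjunct in $(T4_\Box)$ is rewritten via $(4_\Box)$ or via $(U_\Box)$), so there is nothing to correct.
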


\begin{proof2}
$({F}_{\Diamond\Box})$ is an immediate consequence of $F_\Box$ and $Nec_\Diamond$. 
%\olim{I think this uses (P) as well.} \ricardo{you are right. I put $Nec_\Diamond$ instead of $Nec$. I have deleted axiom $({F}_{\Box\Diamond})$ because it is not a theorem in $K45(G)$}. 
%It is easy to prove ${F}_{\Box\Diamond}$ using axioms $F_\Box$, $Nec$ and 
%several times $U_\Box$. The details are left to reader. \olim{I think axiom (D) might be required for this. For example, consider the $\lgc{K45(\mathbf{G})}$-model $\langle W, R, V \rangle$ where $W = \{ w, v, u \}$, $Rwv = 1$, and $Rwu = Rvv = Rvu = Ruv = Ruu = 0.5$. Then $V(\bo \di \gtop, w) = 0.5$, but $V(\bo \gtop, w) = 1$.}
As for schemes $U_\Diamond$ and $U_\Box$,  axioms $4_\Box$ and $4_\Diamond$ give one direction of them. The opposite directions, together with the rest of schemes, are obtained as follows:\\

\vspace{-2em}
\[
\begin{array}{ll}
\text{Proof $(U_\Diamond)$:} & \text{Proof $(U_\Box)$:} \\[0.4em]
	\begin{array}{@{}ll@{}}
	\Diamond \varphi \to \Box \Diamond \varphi  & \text{ axiom } (5_\Diamond) \\
	\Box(\varphi \to \Diamond\varphi) & \text{ by } MP \text{ and }  (FS2) \\
	\Diamond \varphi \to \Diamond \Diamond \varphi & \text{ by } MP \text{ and }  (P)
	\end{array}	
&
	\begin{array}{@{}ll@{}}
	\Diamond \Box\varphi \to \Box \varphi & \text{ axiom } (5_\Box) \\
	\Box(\Box \varphi \to \varphi) & \text{ by } MP \text{ and }  (FS2) \\
	\Box \Box \varphi \to \Box \varphi & \text{ by } MP \text{ and }  (K_\Box)
	\end{array}
\\[2em]
\mbox{Proof $(T4_\Box)$:} 	& \mbox{Proof $(T4_\Diamond)$:} \\
	\begin{array}{@{}ll@{}}
	\Box\Box \varphi \to \Diamond \Box \varphi \vee \Box\Box\varphi		& \text{(T4) with } \Box\varphi \\
	\Box \varphi \to \Diamond \Box \varphi \vee \Box\varphi		& \text{by } (U_\Box)
	\end{array}
&
	\begin{array}{@{}ll@{}}
	(\Box\Diamond \varphi \to \Diamond \Diamond \varphi) \vee \Box((\Diamond \varphi \to \Diamond \varphi) \to \Diamond \varphi)		& \text{(T4)} \\
	(\Box\Diamond \varphi \to \Diamond \Diamond \varphi) \vee  \Box(\gtop \to \Diamond \varphi)	& \text{equiv}
	\end{array}
\\[2em]
\text{Proof $(G45)$:} & \\[0.4em]
\multicolumn{2}{l}{
	\begin{array}{@{}ll@{}}
	(\bo \varphi \to \di \psi) \to (\bo \varphi \to \bo \di \psi) 	& \text{by applying } (5_\di) \\
	(\bo \varphi \to \bo \di \psi) \to (\di \bo \varphi \to \bo \di \psi) & \text{by applying } (5_\bo) \\
	(\di \bo \varphi \to \bo \di \psi) \to \bo(\bo \varphi \to \di \psi) & \text{by } (FS2)
	\end{array}
}
\\[2em]
\text{Proof $(Sk_\Diamond)$:} & \\[0.4em]
\multicolumn{2}{l}{
	\begin{array}{@{}ll@{}}
	\Box(\gtop \to \Diamond \varphi) \to (\Diamond \gtop \to \Diamond\Diamond \varphi) 	& \text{by (P)} \\
	\Box \Diamond \varphi \to (\Diamond \gtop \to \Diamond \varphi)		& \text{by $(U_\Diamond)$ and equivalences} \\
	(\Diamond \gtop \to \Box \Diamond \varphi) \to \Box (\gtop \to \Diamond \varphi) & \text{by } (FS2) \\
	(\Diamond \gtop \to \Diamond \varphi) \to \Box \Diamond \varphi		& \text{by } (5_\Diamond)
	\end{array}
}
\\[2.4em]
\text{Proof $(T4'_\Diamond)$: using } (T4_\di) \text{ and } (Sk_\Diamond)
\end{array}
\]
\vspace{-3mm}

\end{proof2}

% \vspace{0.2cm}

%\begin{proof2}
%$({F}_{\Diamond\Box})$ is an immediate consequence of $F_\Box$ and $Nec$. It is easy to prove ${F}_{\Box\Diamond}$ using axioms $F_\Box$, $Nec$ 
%and several times $U_\Box$. The details are left to reader.
%For schemes $U_\Diamond$ and $U_\Box$,  axioms $4_\Box$ and $4_\Diamond$ 
%give one direction of them. The opposite directions are obtained as follows:\\
%
%\vspace{-0.1cm}
%$
%\begin{array} {l l l l}
%\mbox{Proof 1:} & & \mbox{Proof 2:} & \\
%\Diamond \varphi \to \Box \Diamond \varphi  & \mbox{ axiom } 5_\Diamond & \Diamond \Box\varphi \to \Box \varphi & \mbox{ axiom } 5_\Box \\
%\Box(\varphi \to \Diamond\varphi) & \mbox{ by } MP \mbox{ and }  FS2   \hspace*{1cm} & \Box(\Box \varphi \to \varphi) & \mbox{ by } MP \mbox{ and }  FS2 \\
%\Diamond \varphi \to \Diamond \Diamond \varphi & \mbox{ by } MP \mbox{ and }  P   & \Box \Box \varphi \to \Box \varphi & \mbox{ by } MP \mbox{ and }  K \\
%\ &  \\
%\mbox{Proof 3:} & & \mbox{Proof 4 :} & \\
% \Box\Box \varphi \to \Diamond \Box \varphi & \vee \ \Box\varphi \mbox{ T4 with } \Box\varphi &  & \\
% \Box \varphi \to \Diamond \Box \varphi & \vee \ \Box\varphi \mbox{ by } U_\Box & &  \\
% &  & & \\
% &  &  &  \\
%\end{array}
%$
%
%\end{proof2}

%\blue{*** Lluis: This part should be checked and better displayed: 
{Moreover, if we restrict ourselves to formulas starting with $\Box$ or $\Diamond$ we can prove the following property. 
\begin{lemma} \label{rmk} Let $X = \{\Box \theta, \Diamond \theta : \theta \in \mathcal{L}_{\Box,\Diamond}\}$. If $\varphi \in X$ then the schemas
$$ \varphi \to  \Box\varphi \hspace{5mm} \mbox{ and } \hspace{5mm} \Diamond \varphi \to  \varphi$$ 
are theorems of $K45(\mathbf{G})$.
\end{lemma}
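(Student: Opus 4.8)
The plan is to argue by a direct case distinction on the syntactic shape of $\varphi \in X$. By definition of $X$, either $\varphi = \Box\theta$ or $\varphi = \Diamond\theta$ for some $\theta \in \mathcal{L}_{\Box\Diamond}$, and these two possibilities are mutually exclusive (a formula cannot begin with both modalities). In each case I expect both required schemas to fall out immediately as instances of the four defining axioms of $K45(\mathbf{G})$, so that no derivation beyond a single axiom citation is needed.

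First I would treat the case $\varphi = \Box\theta$. Here the schema $\varphi \to \Box\varphi$ reads $\Box\theta \to \Box\Box\theta$, which is exactly axiom $(4_\Box)$; and the schema $\Diamond\varphi \to \varphi$ reads $\Diamond\Box\theta \to \Box\theta$, which is exactly axiom $(5_\Box)$. Next I would treat the case $\varphi = \Diamond\theta$. Here $\varphi \to \Box\varphi$ reads $\Diamond\theta \to \Box\Diamond\theta$, which is exactly axiom $(5_\Diamond)$; and $\Diamond\varphi \to \varphi$ reads $\Diamond\Diamond\theta \to \Diamond\theta$, which is exactly axiom $(4_\Diamond)$. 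Since the four axioms $(4_\Box)$, $(5_\Box)$, $(5_\Diamond)$, $(4_\Diamond)$ exhaust the four combinations of a leading modality with one of the two target schemas, both schemas are theorems for every $\varphi \in X$.

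Honestly, there is essentially no obstacle in this argument: the content of the lemma is simply the observation that the $K45(\mathbf{G})$ axioms, although stated for arbitrary formulas, already encode the ``absorption'' behaviour of $\Box$ and $\Diamond$ on the modally guarded fragment $X$, and that no appeal to $(FS2)$, $(P)$, or the iterated-modality simplifications of Proposition~\ref{simplif} is required. The only point worth stating carefully is that the case split is genuinely exhaustive for members of $X$, so that the conclusion is uniform over all $\varphi \in X$ rather than depending on $\theta$.
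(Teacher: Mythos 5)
Your proof is correct and follows essentially the same route as the paper's: a case split on whether $\varphi = \Box\theta$ or $\varphi = \Diamond\theta$, with each of the four resulting schemas read off directly from the axioms $(4_\Box)$, $(5_\Box)$, $(5_\Diamond)$, $(4_\Diamond)$. The only cosmetic difference is that the paper cites ``one direction of $(U_\Box)$'' where you cite $(4_\Box)$ directly---these are the same thing---and you spell out both schemas in both cases where the paper handles one and calls the other ``similar.''
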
	

\begin{proof} We check that $K45(\mathbf{G})$ derives $\varphi \to  \Box\varphi$ if  $\varphi \in X$, the other schema is similar. In fact, we have two cases: if $\varphi = \Box \psi$, then $\varphi \to  \Box\varphi$ is in fact one direction of  $(U_\Box)$; if $\varphi = \Diamond \psi$, then  this is axiom $(5_\Diamond)$. 
\end{proof}
}

% \olim{Maybe it's good to add some Hilbert-style derivations for some of these? I'm particularly interested in a proof of $(\bo \varphi \to \di \psi) \to \bo (\bo \varphi \to \di \psi)$, which I haven't been able to prove syntactically, although I have proved it semantically using $\lgc{K45(\alg{G})}$-models. Also note that only the right-to-left directions of these equivalences are necessary for our purposes.}
%
%
%\ricardo{This is the proof that Olim asked for:\\
%$(\bo \varphi \to \di \psi) \to (\bo \varphi \to \bo \di \psi)$ by applying $5_\di$\\
%$(\bo \varphi \to \bo \di \psi) \to (\di \bo \varphi \to \bo \di \psi)$ by applying $5_\bo$\\
%Finally, $ (\di \bo \varphi \to \bo \di \psi) \to \bo(\bo \varphi \to \di \psi)$ from $FS2$.} **** \\
%}

From now on we will use $ThK45(\mathbf{G})$ to denote the set of theorems of $K45(\mathbf{G})$. We close this section with the following observation: %deductions in $K45(\mathbf{G})$ can be reduced to derivations in pure propositional G\"odel logic $G$.

%\begin{lemma}
%{\label{reduction} } For any theory $T$ and formula $\varphi $ in $\mathcal{L}_{\Box \Diamond
%}$, it holds that $T\vdash _{K45(\mathbf{G})} \varphi $ iff $T\cup ThK45(\mathbf{G}) \vdash _{G} \varphi $.
%\end{lemma}

%\blue{** lema alternativo?
\begin{lemma} \label{reduction2} If $T$ is a finite set of formulas,  $T\vdash _{K45(\mathbf{G})} \varphi $ iff $\vdash _{K45(\mathbf{G})} T^{\land} \to \varphi $, where $T^{\land} = \bigwedge\{ \psi \mid \psi \in T \}$.
\end{lemma}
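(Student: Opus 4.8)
The plan is to reduce the claim to the ordinary deduction theorem of Gödel fuzzy logic $\mathbf{G}$, exploiting the fact that the only genuinely modal rule, necessitation $(Nec)$, is restricted to theorems and therefore never interacts with the auxiliary hypotheses in $T$. Concretely, I would first record the $K45(\mathbf{G})$-analogue of Lemma~\ref{reduction}: since $K45(\mathbf{G})$ is obtained from $K(\mathbf{G})$ by adding axioms only (no new rules), the very same argument gives, for any theory $S$ and formula $\chi$, that $S \vdash_{K45(\mathbf{G})} \chi$ iff $S \cup ThK45(\mathbf{G}) \vdash_{\mathbf{G}} \chi$. The point is that in any $K45(\mathbf{G})$-derivation from $S$, every application of $(Nec)$ is to a theorem, so the whole derivation can be rewritten as a derivation in pure Gödel logic from $S$ together with the theorems $ThK45(\mathbf{G})$, using modus ponens alone.

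For the forward direction, assume $T \vdash_{K45(\mathbf{G})} \varphi$ with $T = \{\psi_1, \dots, \psi_n\}$ finite. By the reduction above, $T \cup ThK45(\mathbf{G}) \vdash_{\mathbf{G}} \varphi$. Now I would invoke the deduction theorem for $\mathbf{G}$: because Gödel logic is contractive (its strong conjunction coincides with the lattice meet $\wedge = \min$, hence is idempotent), the standard form $\Gamma \cup \{\alpha\} \vdash_{\mathbf{G}} \beta$ iff $\Gamma \vdash_{\mathbf{G}} \alpha \to \beta$ holds. Applying it $n$ times peels off $\psi_1, \dots, \psi_n$ and yields $ThK45(\mathbf{G}) \vdash_{\mathbf{G}} \psi_1 \to (\cdots \to (\psi_n \to \varphi))$. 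The importation/exportation axioms (A4a) and (A4b) prove in $\mathbf{G}$ the equivalence of this iterated implication with $T^{\land} \to \varphi$, hence $ThK45(\mathbf{G}) \vdash_{\mathbf{G}} T^{\land} \to \varphi$; reading the reduction lemma backwards with empty assumptions then gives $\vdash_{K45(\mathbf{G})} T^{\land} \to \varphi$.

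The converse direction is immediate: from $\vdash_{K45(\mathbf{G})} T^{\land} \to \varphi$ and monotonicity we get $T \vdash_{K45(\mathbf{G})} T^{\land} \to \varphi$, while $T \vdash_{K45(\mathbf{G})} T^{\land}$ follows by finitely many applications of conjunction introduction (a Gödel-logic theorem), so modus ponens delivers $T \vdash_{K45(\mathbf{G})} \varphi$.

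The only real obstacle is the treatment of $(Nec)$, and it is precisely dissolved by the restriction built into the calculus: since necessitation fires only on theorems, no assumption in $T$ ever ends up under a $\Box$, which is exactly what would break a naive deduction theorem for normal modal logics. Once this is packaged into the reduction to $\vdash_{\mathbf{G}}$, everything else is the routine propositional deduction theorem together with the exportation equivalence. Alternatively, one could prove the forward direction directly by induction on the length of the $K45(\mathbf{G})$-derivation, with the axiom and modus ponens cases handled by standard Gödel-logic manipulations and the necessitation case handled by the observation that its conclusion is already a theorem; this avoids explicitly citing the reduction lemma but rests on the same underlying fact about $(Nec)$.
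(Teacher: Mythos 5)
Your proof is correct and follows essentially the same route as the paper's: both reduce $T \vdash_{K45(\mathbf{G})} \varphi$ to $T \cup ThK45(\mathbf{G}) \vdash_{\mathbf{G}} \varphi$ via Lemma~\ref{reduction}, apply the deduction theorem of G\"odel logic to obtain $ThK45(\mathbf{G}) \vdash_{\mathbf{G}} T^{\land} \to \varphi$, and then read the reduction backwards to conclude $\vdash_{K45(\mathbf{G})} T^{\land} \to \varphi$. Your added details---justifying that the reduction lemma transfers to $K45(\mathbf{G})$ because only axioms are added, peeling hypotheses one at a time and invoking (A4a)/(A4b), and checking the converse by modus ponens---are sound elaborations of steps the paper leaves implicit in its chain of equivalences.
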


\begin{proof2}  By Lemma \ref{reduction}, we have $T\vdash _{K45(\mathbf{G})} \varphi $ iff $T\cup ThK45(\mathbf{G}) \vdash _{G} \varphi $. By the deduction theorem of G\"odel logic, the latter is equivalent to $ ThK45(\mathbf{G}) \vdash _{G}  T^{\land} \to \varphi $, and by (i) again, this is equivalent to $\vdash _{K45(\mathbf{G})} T^{\land} \to \varphi $. 
\end{proof2}
%**}

\begin{remark} \label{emptyzone} It is worth noting that for any valuation $v$ such that $v(ThK45(\mathbf{G})) = 1$ there is no formula $\varphi$ such that $v(\Diamond \gtop) < v(\nabla \varphi) < 1$ with $\nabla \in 
\{\Box , \Diamond\}$ because both formulae $(\Box \varphi \to \Diamond \varphi) \vee \Box \varphi$ and $\Diamond \varphi \to \Diamond \gtop$ are in 
$ThK45(\mathbf{G})$.
\end{remark}

\section{Simplified Kripke semantics and completeness} \label{sec:possibilitysem}

In this section we will show that $K45(\mathbf{G})$  is complete with respect to a class of simplified Kripke G\"odel frames.

\begin{definition}
\label{simplgodelframe} A {\em possibilistic Kripke frame}, or $\Pi$-frame, is a
structure $\langle W,\pi \rangle $ where $W$ \emph{is a non-empty
set of worlds}, and $\pi:W \rightarrow [0,1]$ is a {\em possibility distribution} over $W$.

A {\em possibilistic G\"odel Kripke model}, \emph{$\Pi G$-model} for short, is a triple $ \langle W, \pi, e\rangle$ where $ \langle W, \pi\rangle $ is a $\Pi$-frame and $e: W \times Var \to [0, 1]$ provides a G\"odel  evaluation of variables in each world.
For each $w \in W$, $e(w, -)$ extends to arbitrary formulas in the usual way for the propositional connectives and for modal operators in the following way:

\medskip

\hspace{3.5cm} $e(w,\Box \varphi ):=\inf_{w'\in W}\{\pi(w') \Rightarrow 
e(w',\varphi )\}$

\hspace{3.5cm} $e(w,\Diamond \varphi ):=\sup_{w'\in W}\{\min(\pi(w'), e(w',\varphi ))\}$.

\medskip
\noindent If $\pi$ is normalised, i.e. if $\sup_{w \in W} \pi(w) = 1$, then $ \langle W, \pi, e\rangle$ will be called a {\em normalised} possibilistic G\"odel Kripke model, or \emph{$\Pi^* G$-model}. A formula $\varphi$ is \emph{valid} in a $\Pi G$-model $\langle W, \pi, e \rangle$ if $e(w,\varphi) = 1$ for all $w \in W$.

We will denote by $\Pi \cal G$ the class of possibilistic G\"odel Kripke models, and by $\Pi^* \cal G$ the subclass of normalised models. We say that a formula $\varphi$ is \emph{$\Pi \cal G$-valid}, written $\models_{\Pi\mathcal{G}} \varphi$, if $\varphi$ is valid in all possibilistic G\"odel Kripke models, and \emph{$\Pi^* \cal G$-valid}, written $\models_{\Pi^*\mathcal{G}} \varphi$, if $\varphi$ is valid in all normalised possibilistic G\"odel Kripke model.
\end{definition}

Observe that the evaluation of formulas beginning with a modal operator is in fact independent from the current world. Also note that the $e(\cdot,\Box \varphi )$ and $e(\cdot,\Diamond \varphi )$ are in fact generalisations for G\"odel logic propositions of the necessity and possibility degrees of $\varphi$ introduced in Section \ref{intro}  for classical propositions, although now they are not dual (with respect to G\"odel negation) any longer.

%
%As we already mentioned in the introduction, it is clear that a possibilistic frame $\langle W, \pi\rangle$ is equivalent to the GK-frame $\langle W, A_\pi \rangle$ where $R_\pi = W \times \pi$.

% We call the last the associated model of the first one.
%We will denote by $\Pi^*\mathcal{G}$ the class of both class of possibilistic G\"odel models, and by  $\Pi^*\mathcal{G}$ the subclass of  normalized possibilistic G\"odel models, the ones that in addition satisfy the following property:
%\begin{itemize}
%\item $ \sup_{w \in W} \pi(w)=1$.
%\end{itemize}

%It is clear that a same axiomatic system logic can be determined by different classes of structures. In this section, we will prove that, besides being determined by the class of $\mathcal{KD}45(\mathbf{G})$ models,  the logic $K45(\mathbf{G})$  is also determined by the class $\Pi\mathcal{G}$ of ``normalized'' possibilistic G\"odel models. In fact we are going to prove weak completeness of $K45(\mathbf{G})$  for deductions from finite theories in $\Pi\mathcal{G}$ frames.

In the rest of this section we are going to show  in detail a weak completeness proof of the logic $K45(\mathbf{G})$ (resp.\ $KD45(\mathbf{G})$) with respect to the class $\Pi\mathcal{G}$ (resp.\ the subclass $\Pi^*\mathcal{G}$) of possibilistic G\"odel Kripke models.  In fact one can prove a slightly stronger result, namely completeness for deductions from finite theories.

We start with the case of $K45(\mathbf{G})$. In what follows, for any formula $\varphi$  we denote by $Sub(\varphi) \subseteq \mathcal{L}_{\square 
\Diamond }$
the set of subformulas of $\varphi$ and containing
the formula $\gbot $.  Moreover, let $X:=\{\square \theta ,\Diamond \theta :\theta \in \mathcal{L}_{\square
\Diamond }\}$ be the set of formulas in $\mathcal{L}_{\square \Diamond }$
beginning with a modal operator; then $\mathcal{L}_{\square \Diamond }(Var)=%
\mathcal{L(}Var\cup X)$. That is, any formula in $\mathcal{L}_{\square
\Diamond }(Var)$ may be seen as a propositional G\"odel  formula built from the extended set
of  propositional variables $Var\cup X$. In addition, for a given formula 
$\varphi$, % fixed a finite fragment $Sub(\varphi)$,
let $\sim_\varphi$ be the equivalence relation on $[0,1]^{Var\cup X} \times \lbrack 0,1]^{Var\cup X}$
defined as follows:
$$ u \sim_\varphi w \mbox{ iff } \forall \psi \in Sub(\varphi): u(\Box \psi) = w(\Box \psi) \mbox{ and } u(\Diamond \psi) = w(\Diamond \psi) .$$

Now, assume that a formula $\varphi$ is not a theorem of $K45(\mathbf{G})$. Hence by completeness of G\"odel calculus and Lemma \ref{reduction}, there exists a G\"odel valuation $v$  such that $v(ThK45(\mathbf{G}))=1$ and $v(\varphi)<1$.
With the valuation $v$ now fixed, we follow the usual canonical model construction, defining a canonical $\Pi G$-model $\m^v_{\varphi}$ in which we will show $\varphi$ is not valid.

The \emph{canonical model} \emph{\ }$%
\m^v_{\varphi}= \langle W^v_{\varphi},\pi^v_{\varphi},e^v_{\varphi} \rangle$ is defined 
as follows:%\medskip
\begin{itemize}
\item $W^v_{\varphi}$ is the set $\{u \in \lbrack 0,1]^{Var\cup X} \mid u 
\sim_\varphi v \mbox{ and } u(ThK45(\mathbf{G}))=1 \}$.
\item $\pi^v_{\varphi}(u)=\inf_{\psi \in Sub(\varphi)}\{\min(v(\Box \psi)\rightarrow u(\psi ), u(\psi )\rightarrow v(\Diamond \psi ))\}.$
\item $e^v_{\varphi}(u,p)=u(p)$ for any $p\in Var$.%\medskip
\end{itemize}

In this context, we call the elements of $\Delta_\varphi := \{\square \theta ,\Diamond \theta :\theta \in Sub(\varphi) \}$ the {\em fixed points} of the Canonical Model.
Note that having $\nu(ThK45(\mathbf{G}))=1$ does not guarantee that $\nu$ belongs to the canonical model because it may not take the appropriated values for the fixed points, i.e.\ it may be that  $u \not\sim_\varphi \nu$. However, the next lemma shows how to, under certain conditions, transform such an evaluation into another  belonging to the canonical model.

\begin{lemma}\label{normalization} Let $u \in W^v_\varphi$ and let $\nu: {Var\cup X} \to [0,1]$ be a G\"odel valuation. Define $\delta = \max \{ u(\lambda) : \nu(\lambda) < 1 \mbox{ and } \lambda \in \Delta_\varphi\}$ and  $\beta = \min \{ u(\lambda) : \nu(\lambda)= 1 \mbox{ and } \lambda \in \Delta_\varphi\}$. If $\nu$ satisfies the following conditions:

\begin{description}
\item [a.] $\nu(ThK45(\mathbf{G}))=1$.
\item [b.] For all $\lambda \in X$, we have $u(\lambda) > \delta \Rightarrow \nu(\lambda)=1$.
\item [c.] For any $\psi, \phi \in \{\lambda : u(\lambda) \leq \delta \mbox{ and } \lambda \in X \}$: $u(\psi) < u(\phi)$ implies $\nu(\psi) < \nu(\phi)$.
\item [d.] {For any $\psi, \phi \in \Delta_\varphi: u(\psi) \leq u(\phi)$ implies $\nu(\psi) \leq \nu(\phi)$.}
\end{description}
then, there exists a G\"odel valuation $w \in W^v_\varphi$ such that for any $\varepsilon > 0$ with $\delta+\varepsilon < \beta$, and for any formulae $\psi$ and $\phi$, the following conditions hold:
\begin{enumerate}
\item $\nu(\psi) = 1 $  implies $ w(\psi) \geq \delta+\varepsilon$.
\item $\nu(\psi) < 1 $ implies $ w(\psi) < \delta+\varepsilon$.
\item $1 \neq  \nu(\psi) \leq \nu(\phi)$ implies $w(\psi) \leq w(\phi)$.
\item $\nu(\psi) < \nu(\phi)$ implies $ w(\psi) < w(\phi)$.
%\item $\nu(\psi) = \nu(\phi) = 1  \mbox{ and } u(\psi) \leq u(\phi)$ imply $ w(\psi) \leq w(\phi)$.
%\item $\nu(\psi) = \nu(\phi) = 1 \mbox{ and } u(\psi) < u(\phi)$ imply $ w(\psi) < w(\phi)$.
\end{enumerate}
\end{lemma}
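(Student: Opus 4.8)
The plan is to obtain $w$ as a \emph{rescaling} $w=f\circ\nu$ of $\nu$ through a single increasing map $f$ that is pinned to $u$ on the fixed points. Concretely, I would first produce a strictly increasing $f\colon[0,1]\to[0,1]$ with $f(0)=0$, $f(1)=1$, $f\big([0,1)\big)\subseteq[0,\delta]$ and $f(\nu(\lambda))=u(\lambda)$ for every $\lambda\in\Delta_\varphi$, and then set $w(a):=f(\nu(a))$ for each atom $a\in Var\cup X$. The existence of $f$ is the first thing to check: condition~(d) makes $\nu(\lambda)\mapsto u(\lambda)$ a well-defined, strictly increasing partial map on $\Delta_\varphi$ (one shows $\nu(\lambda_1)=\nu(\lambda_2)$ iff $u(\lambda_1)=u(\lambda_2)$ there), Remark~\ref{emptyzone} applied to $u$ forces $u(\lambda)=1$ for every fixed point with $\nu(\lambda)=1$ (so $f(1)=1$ is consistent and $\beta=1$ in the non-degenerate case), and conditions~(b),(c) together with the empty zone for $\nu$ confine every atom with $\nu(a)<1$ below the top fixed-point value, so that $f$ can send the whole region $[0,1)$ into $[0,\delta]$ while staying strictly increasing.

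Granting $f$, membership $w\in W^v_\varphi$ splits in two. Agreement on the fixed points is immediate: $w(\lambda)=f(\nu(\lambda))=u(\lambda)=v(\lambda)$ for $\lambda\in\Delta_\varphi$ because $u\sim_\varphi v$, hence $w\sim_\varphi v$. The real work is $w(ThK45(\mathbf{G}))=1$. For this I would use that, reading the formulas as propositional G\"odel formulas over the atoms $Var\cup X$, whether a formula takes value $1$ under a G\"odel valuation depends only on the relative order of the atom values and on which atoms are sent to $0$ or to $1$. Since $f$ is strictly increasing with $f(0)=0$ and $f(1)=1$, the valuation $w=f\circ\nu$ induces exactly the same order type (with distinguished $0,1$) on the atoms as $\nu$; as every $\theta\in ThK45(\mathbf{G})$ has $\nu(\theta)=1$ by hypothesis~(a), we conclude $w(\theta)=1$.

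Finally I would establish conditions 1--4 for arbitrary $\psi,\phi$ by induction on formula structure. Conditions 1 and 2 have as base case the construction on atoms ($\nu(a)=1$ gives $w(a)=1\ge\delta+\varepsilon$, and $\nu(a)<1$ gives $w(a)\le\delta<\delta+\varepsilon$); the inductive step uses that $\wedge,\vee,\to$ preserve both the ``value $1$'' zone and the ``value $\le\delta$'' zone, with the usual care that $\psi\to\phi$ equals $1$ exactly when the antecedent does not exceed the consequent. Conditions 3 and 4 are order-preservation statements; since $f$ is strictly monotone they reduce, again by induction, to the monotonicity of $\min,\max,\Rightarrow$ and to the fact that every G\"odel operation returns $1$, $0$, or one of its arguments, so the weak inequality (for $\nu(\psi)\neq1$) and the strict inequality propagate through each connective.

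The two genuine obstacles are the existence of the rescaling $f$ and the validity of the theorems under $w$. The feasibility of $f$ is delicate because it must simultaneously hit the exact values $u(\lambda)$ at the fixed points, keep the entire $\nu<1$ part inside $[0,\delta]$, and remain strictly increasing; this works only because conditions~(b)--(d) and the empty-zone phenomenon of Remark~\ref{emptyzone} split the modal atoms into a low block below $\delta$ and a top block at $1$, leaving no atom stranded in the open gap $(\delta,1)$. The theorem-preservation step is the one I would write out most carefully, as it is where the order-type invariance of G\"odel truth must be combined with hypothesis~(a).
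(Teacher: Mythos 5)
Your construction breaks down at exactly the point you flag as the ``first thing to check'': a single rescaling map $f$ with $f(\nu(\lambda))=u(\lambda)$ for all $\lambda\in\Delta_\varphi$ does not exist in general. The obstruction is the block of fixed points that $\nu$ collapses to $1$: if $\lambda_1,\lambda_2\in\Delta_\varphi$ satisfy $\nu(\lambda_1)=\nu(\lambda_2)=1$ but $u(\lambda_1)\neq u(\lambda_2)$, then $f(1)$ would have to equal both $u(\lambda_1)$ and $u(\lambda_2)$. Your way out is the claim that Remark~\ref{emptyzone} forces $u(\lambda)=1$ for every fixed point with $\nu(\lambda)=1$ (so $\beta=1$), but this misreads the remark: the empty zone for $u$ is the interval $\bigl(u(\Diamond\gtop),1\bigr)$, not $(\delta,1)$, so fixed points with $\nu(\lambda)=1$ may well have $u(\lambda)\in[\beta,u(\Diamond\gtop)]$ with $\beta\leq u(\Diamond\gtop)<1$; nothing in hypothesis \textbf{a} pushes these values up to $1$ (allowing $u(\Diamond\gtop)<1$ is precisely the difference between $K45(\mathbf{G})$ and $KD45(\mathbf{G})$). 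This is not a corner case: in the paper's own applications of the lemma inside the Truth Lemma (Lemma~\ref{equation-joint}), condition \textbf{b} forces $\nu(\chi)=1$ for \emph{every} $\chi\in X$ with $u(\chi)>\delta$, and in Claim 2 the value $\beta$ is $u(\Diamond\psi)=\alpha$, which is typically strictly below $1$. For such inputs your $w=f\circ\nu$ assigns $1$ to the whole $\nu=1$ block, hence $w(\lambda)\neq u(\lambda)=v(\lambda)$ for some fixed point, so $w\not\sim_\varphi v$ and $w\notin W^v_\varphi$ -- the conclusion of the lemma fails, and downstream the bound $\pi^c(w)>w(\psi)$ in Claim 1 fails with it.

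The paper's proof avoids this precisely by \emph{not} factoring $w$ through $\nu$: it glues two maps, setting $w(p)=g(\nu(p))$ when $\nu(p)<1$ (with $g$ pinned to $u$ on the low fixed points, as you do) but $w(p)=h(u(p))$ when $\nu(p)=1$, where $h$ is the identity on $[\beta,1]$, so the fixed points collapsed by $\nu$ keep their exact $u$-values. This two-regime definition is also why conclusions 1--2 of the lemma are stated with the threshold $\delta+\varepsilon$ rather than with the value $1$, and it is why your (otherwise correct, and genuinely slick) order-type argument for $w(ThK45(\mathbf{G}))=1$ becomes unavailable: the glued $w$ does not induce the same order type as $\nu$, since atoms that $\nu$ identifies at $1$ become $w$-distinct. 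That is the reason the paper must verify theoremhood the hard way -- the extra properties 5--6 of $h\circ u$ on atoms, a separate check of each $K(\mathbf{G})$ axiom (with a three-subcase analysis for $(FS2)$), and an induction on proof length to handle $(Nec)$. In short: your argument for conditions 1--4 and for theorem preservation would be fine \emph{if} a single strictly increasing $f$ existed, but it cannot exist under the lemma's actual hypotheses; the missing idea is the piecewise definition of $w$ from the pair $(\nu,u)$, and with it the second half of the proof changes character entirely.
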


\begin{proof2} 

First of all, %we define  $\beta = \min \{ u(\lambda) : \nu(\lambda)= 1 \mbox{ and } \lambda \in \Delta_\varphi\}$.
notice that if $\nu$ satisfies the conditions {\bf c} and {\bf d}, then necessarily $\delta < \beta$. Indeed, suppose $\delta \geq \beta$. Then there are at least two formulas $\theta_1$ and $\theta_2$ in $\Delta_\varphi$ such that $\nu(\theta_1) <1$, $\nu(\theta_2)=1$ and $\delta \geq u(\theta_1) \geq u(\theta_2) \geq \beta$. Note that the case $u(\theta_1) > u(\theta_2)$ is not possible because it would violate condition {\bf c}, and the case  $u(\theta_1) = u(\theta_2)$ is also imposible because it would then violate condition {\bf d}.

%\blue{Lluis: I do not follow the next two sentences: why do we need the remark here and how condition $\bf c$ allows us to derive  $\nu(\theta_1) \geq \nu(\theta_2)$. ***But, according to {\it Remark \ref{rmk}}, we $u(\theta_2\to \theta_1)=u(\Box(\theta_2\to \theta_1))=1$. 
%Therefore, if $\delta \geq u(\theta_1) \geq u(\theta_2)$ then, by condition {\bf c}, we have $\nu(\theta_1) \geq \nu(\theta_2)$, contradiction.***}

Let  $B= \{ \nu(\lambda) : \lambda \in \Delta_\varphi , \nu(\lambda) < 1\} \cup \{0 \} = \{ b_0 = 0 < b_1 < \ldots < b_N \}$. Obviously, $b_N < 1$. For each $0 \leq i \leq N$, pick  $\lambda_i \in \Delta_\varphi$ such that $\nu(\lambda_i) = b_i$.
Define now a continuous strictly function $g:[0,1]\mapsto \lbrack 0,\delta+\varepsilon) \cup \{1\}$ such
that \medskip

$g(1)=1$

$g(b_i)= u(\lambda_i)$ for every $0 \leq i \leq N$

$g[(b_N, 1)]= (\delta, \delta+\varepsilon)$
\medskip

\noindent Notice that $\delta = g(b_N)$.  In addition, define another continuous strictly increasing function $h:[0,1] \mapsto [\delta+\varepsilon, 1]$ such that
\medskip

$h(0)=\delta+\varepsilon$

$h[(0,\beta)] =(\delta+\varepsilon, \beta)$

$h(x) = x$, for $x \in [\beta, 1]$
\medskip

\noindent Then we define the valuation $w:  Var \cup X \to [0,1]$ as follows:

\medskip
$w(p) = \left \{
\begin{array}{ll}
g(\nu(p)), & \mbox{if } \nu(p) < 1,\\
h(u(p)), & \mbox{if } \nu(p) = 1.
\end{array}
\right .
$
\medskip

Next step is to prove that $w$ satisfies the required Properties 1--4. And we are going to prove it by induction on {the maximum of the complexity of both formulae $\psi, \phi$.}

First, we consider the base case when both $\psi$ and $\phi$ belong to $Var \cup X$. Then
\begin{enumerate}
    \item By definition of $w$, $\nu(\psi) = 1$ implies $w(\psi)= h(u(\psi)) \geq \delta+\varepsilon$. The same happens for $\phi$.
    \item By definition of $w$, $\nu(\psi) < 1$ implies $w(\psi)= g(\nu(\psi)) < \delta+\varepsilon$. The same happens for $\phi$.
    \item Suppose $1 \neq \nu(\psi) \leq \nu(\phi)$. Since $\nu(\psi) < 1$, by definition of $w$, we have $w(\psi)=g(\nu(\psi)) < \delta+\varepsilon$. Now, we analyse different cases.
  If $\nu(\phi) < 1$ then $w(\phi) = g(\nu(\phi)) \geq g(\nu(\psi)) = 
w(\psi)$ because $g$ is strictly increasing. Otherwise, If $\nu(\phi) =1$ then $w(\phi)= h(u(\phi)) \geq \delta +\varepsilon > w(\psi)=g(\nu(\psi))$.
  \item  Suppose $\nu(\psi) < \nu(\phi)$. In this case, the proof is similar to the previous one. If $\nu(\psi) < 1$ and $\nu(\phi)=1$ then $w(\phi)= h(u(\phi)) \geq \delta +\varepsilon > g(\nu(\psi)) = w(\psi)$. On 
the contrary, if both $\nu(\psi)<1$ and $\nu(\phi)<1$ then $w(\phi) = g(\nu(\phi)) > g(\nu(\psi)) = w(\psi)$ because $g$ is strictly increasing.
%  \item In fact, if both formulae $\psi$ and $\phi$ belong to $Var \cup X$ then we are able to prove an extra case: if $\nu(\psi)=\nu(\phi)=1$ and $u(\psi) \leq u(\phi)$ then $w(\psi) \leq w(\phi)$. We know by definition $w(\psi)= h(u(\psi))$ and $w(\phi)= h(u(\phi))$. By taking into account that $h$ is strictly increasing function, we obtain the wanted. 
This case will become important when we try to prove that $w$ satisfies the axioms of $K45(\mathbf{G})$. \label{extraproperty}
 \end{enumerate}
 In fact, in the case both formulae $\psi$ and $\phi$ belong to $Var \cup 
X$  we can prove that two further conditions hold:
 \begin{enumerate}
 \item[5.] If $\nu(\psi)=\nu(\phi)=1$ and $u(\psi) \leq u(\phi)$ then 
$w(\psi) \leq w(\phi)$.  \label{extraproperty1}
 \item[6.] If $\nu(\psi)=\nu(\phi)=1$ and $u(\psi) < u(\phi)$ then $w(\psi) < w(\phi)$.  \label{extraproperty2}
 \end{enumerate}
 Indeed, by definition of $w$, we have $w(\psi)= h(u(\psi))$ and $w(\phi)= h(u(\phi))$, and since $h$ is strictly increasing, $u(\psi) \leq u(\phi)$ (resp. $u(\psi) < u(\phi)$) implies $h(u(\psi)) \leq h(u(\phi))$ (resp. $h(u(\psi)) < h(u(\phi))$), as desired. These properties will become important when proving that $w$ satisfies the axioms of $K45(\mathbf{G})$.

Now, we consider the inductive step. \medskip
%The induction hypothesis (IH) is that Properties 1--4 are satisfied by formulas with complexity at most $n$ and we want to prove these properties keep holding for formulae with complexity at most $n+1$. This is proved in the following claim. \medskip

\noindent {\bf Claim}: {\em Induction hypothesis (IH):  if Properties 1--4 are satisfied by formulas with complexity at most $n$, then these properties also hold for formulae with complexity at most $n+1$.}  \medskip

\noindent The proof of this claim is quite technical and it is moved to the appendix. 

Finally, we prove that $w \in W^v_{\varphi}$, i.e. we prove that both $w \sim_\varphi v$ and  $w(ThK45({\cal G})) = 1$.
\begin{itemize}
\item[(i)] By definition, $w \sim_\varphi v$ iff $w(\Box\phi) = v(\Box\phi)$ and $w(\Diamond\phi) = v(\Diamond\phi)$ for all $\phi \in Sub(\varphi)$. Let $A \in \{ \Box \phi, \Diamond \phi\} \subseteq \Delta_\varphi$.
By definition of $w$, $w(A) = g(\nu(A))$ if $\nu(A) < 1$, and $w(A) = 
h(u(A))$, otherwise. Since $A \in \Delta_\varphi$, if $\nu(A) < 1$ then, by construction ($u(A) \leq \delta$), we have $g(\nu(A)) = u(A) \leq \delta$, and if $\nu(A) =1$, again by construction ($(u(A) \geq \beta$), we have  $ h(u(A)) = u(A)$. So, we have proved that  $w \sim_\varphi u$, but by hypothesis $u \sim_\varphi v$ as well, thus $w \sim_\varphi v$ as well.

\item[(ii)] We first prove that all axioms of $K45(\mathbf{G})$ are evaluated to 1 by $w$. The axioms of $\mathcal{G}$ are evaluated to $1$ by any G\"odel valuation. As for the specific axioms of $K45(\mathbf{G})$ (i.e.\ axioms $
%(D), 
(4_\Box), (4_\Diamond),  (5_\Box), (5_\Diamond)$), observe that all these 
axioms are of the form $\phi \to \psi$ for some $\phi, \psi \in X$. Then it is enough to prove that for any $ \varphi, \psi \in X$, if $ \nu(\phi \to \psi)=1$ then  $w(\phi \to \psi)=1$. 
%\olim{This is not what is proved here. What is proved is that if $\phi \to \psi$ is a theorem of $K45(\mathbf{G})$ and $\phi, \psi \in X$, then $w(\phi \to \psi) = 1$.} \ricardo{Sorry Olim but I don't agree. Essencially, it is proved that $w(4_\Box) = w(4_\Diamond)= w(5_\Box) = w(5_\Diamond) = 1$ }
%
%this follows from the extra Property 5 %\ref{extraproperty}
%since it implies that $\forall \varphi, \psi \in X$, if $ \nu(\phi \to \psi)=1$ then  $w(\phi \to \psi)=1$.
Indeed, if $ \nu(\phi \to \psi) = 1$, then $ \nu(\phi) \leq \nu(\psi)$ and we have two possibilities:

\begin{itemize}

\item if $1 \neq \nu(\phi) \leq \nu(\psi)$, then by Property 3, we have $w(\phi) \leq w(\psi)$.

\item if $\nu(\phi) = \nu(\psi) = 1$, then by definition of $w$, $w(\phi) = h(u(\phi))$  and $w(\psi) = h(u(\psi))$. But since $u \in  W^v_\varphi$,
we have $u(\phi \to \psi) = 1$, because we assume that $\phi \to \psi \in K45(\mathbf{G})$ with $\phi, \psi \in X$. Thus $u(\phi) \leq u(\psi)$ and, by Property 5, we know $w(\phi) = h(u(\phi)) \leq  h(u(\psi)) = w(\psi)$.
\end{itemize}

Finally, let us consider the axioms of $K(\mathbf{G})$ (Section 2). We have to prove that $w(\chi)=1$ for each of such axioms $\chi$, knowing by assumption that $\nu(\chi) = 1$. The case of $(F_\Box)$ is easy since $(F_\Box) \in X$, and then,
%.\olim{How does this case follow? I couldn't readily see it.} \ricardo{Remember that $(F_\Box) \in X$. Then, 
by definition, $w(F_\Box)=h(u(F_\Box))=1$.  
As for axiom $(K_\Box)$, note that, using propositional reasoning, it can be equivalently expressed first as $(\Box(\phi \to \psi) \land \Box \phi) \to \Box \psi$, and then as  $(\Box(\phi \to \psi) \to \Box \psi) \lor  (\Box \phi \to \Box \psi)$. Therefore, if  $\nu(K_\Box) = 1$, it means either $\nu(\Box(\phi \to \psi) \to \Box \psi) = 1$ or $\nu(\Box \phi \to \Box \psi) 
= 1$. But these two cases concern implications of formulas from $X$, and hence we are in the same situation above as with the specific axioms of 
 $K45(\mathbf{G})$, and hence using the same reasoning, we can conclude that either  $w(\Box(\phi \to \psi) \to \Box \psi) = 1$ or $w(\Box \phi \to \Box \psi) = 1$.

The case of axioms $(K_\Diamond)$ and $(P)$ can be dealt in an analogous way, as they can be written as a disjunction of implications of formulas from $X$. So the case left is that of axiom $(FS2)$, $(\Diamond \phi \to \Box\psi) \to \Box(\phi \to \psi)$.
%Note that, in $K45(\mathbf{G})$, $\Diamond \phi \to \Box \psi \equiv \Box (\Diamond \phi \to \Box \psi)$. Then the case of axiom $(FS2)$ is equivalent to a formula with the wanted way, i.e. an implication of formulas from $X$.

By hypothesis, we know $\nu(FS2) = 1$, that is $\nu(\Diamond \phi) \Rightarrow \nu(\Box \psi) \leq \nu(\Box(\phi \to \psi))$. According to that, 
we have to prove $w(FS2) = 1$ is as well.
Notice that, since  $\psi \to (\phi \to \psi)$ is a tautology, $u(\Box(\psi \to (\phi \to \psi)))=\nu(\Box(\psi \to (\phi \to \psi)))=1$. Then, by definition, $w(\Box(\psi \to (\phi \to \psi)))=1$ as well and because axiom (K) is valid for $w$, we have $w(\Box \psi \to \Box (\phi \to \psi)) = 1$, i.e. $w(\Box \psi) \leq w(\Box (\phi \to \psi))$.\\
 %is a tautology In addition, we know that $\nu(\Box \psi) \leq \nu(\Box(\phi \to \psi)$ because $\psi \to (\phi \to \psi)$ is a tautology.
 Now, we consider the following cases:

\begin{itemize}
\item Case $u(\Diamond \phi) \leq u(\Box \psi)$. Then $u(\Box(\phi \to \psi))=1$ which implies $\nu(\Box(\phi \to \psi))=1$. Hence, by construction, $w(\Box(\phi \to \psi))= h(u(\Box(\phi \to \psi)))=1$.

\item Case $u(\Diamond \phi) > u(\Box \psi)$. Here we distinguish three subcases:

\begin{itemize}
\item  $u(\Box \psi) \leq \delta < u(\Diamond \phi)$: by Conditions {\bf b} and {\bf c}, $\nu(\Diamond \phi) = 1$ and $\nu(\Box \psi) < 1$, respectively. Therefore, by Property 4, $w(\Box \psi) < w(\Diamond \phi)$ and 
thus $w(\Diamond \varphi \to \Box \psi) = w(\Box \psi) \leq w(\Box(\phi 
\to \psi))$ and hence $w(FS2) = 1$.

\item  $\delta < u(\Box \psi) < u(\Diamond \phi)$: by Condition {\bf b}, $1 = \nu(\Diamond \phi) = \nu(\Box \psi)\leq \nu(\Box(\phi \to \psi))$. Thus, by construction, $w(FS2) = (h(u(\Diamond \varphi)) \Rightarrow 
h(u(\Box \psi))) \Rightarrow h(u(\Box(\phi \to \psi)) = 1$.

\item  $u(\Box \psi) < u(\Diamond \phi) \leq \delta$: by Condition {\bf c}, $\nu(\Box \psi) < \nu(\Diamond \phi)$, and by Property 4, $w(\Box \psi) < w(\Diamond \phi)$, and hence  $w(\Diamond \varphi \to \Box \psi) = w(\Box \psi) \leq w(\Box(\phi \to \psi))$, and hence $w(FS2) = 1$.
\end{itemize}
%If $\nu(\Diamond \phi) = \nu(\Box \psi) = 1$,\footnote{Note that the case $\nu(\Diamond \phi) = \nu(\Box \psi) < 1$ is not possible because of condition {\bf c}.} then, by condition {\bf a}, $\nu(\Box(\phi \to \psi)) = 1$ and, by construction, $w(FS2) = (h(u(\Diamond \varphi)) \Rightarrow h(u(\Box \psi))) \Rightarrow h(u(\Box(\phi \to \psi))) = 1$.\\
%On the contrary, if $\nu(\Box \psi) < \nu(\Diamond \phi)$, then, by using Property 4, one concludes $w(\Box \psi) < w(\Diamond \phi)$, and from $1 \neq \nu(\Box \psi) \leq \nu(\Box(\phi \to \psi))$, by Property 3 we also conclude $w(\Diamond \varphi \to \Box \psi) = w(\Box \psi) \leq w(\Box(\phi \to \psi))$.
\end{itemize}
%\olim{I am not 100\% convinced of the correctness of this part (ii). What is shown is that all axioms of $\lgc{K45(\mathbf{G})}$ are evaluated to 1 by $w$. I do not see however how this implies that all theorems of $\lgc{K45(\mathbf{G})}$ are evaluated to 1, because we also have the (nec) rule that needs to be preserved.} 

\end{itemize}
{So far, we have proved that $w(\phi) = 1$ if $\phi$ is a $K45(\mathbf{G})$ axiom. To conclude the proof, we need to extend this result to the rest of the formulas in $ThK45({\mathbf{G}})$. Recall that a formula $\phi$ in the set $ThK45({\mathbf{G}})$ is either an axiom of $K45(\mathbf{G})$ or $\phi$ can be proved from the axioms and rules of inference $K45(\mathbf{G})$. In the latter case, there is non-empty finite sequence of formulae $\phi_1, \phi_2,\dots, \phi_n$, with $\phi_n = \psi$, and each $\phi_i$ is either an axiom or it has been obtained by application of an inference rule on some of the preceding formulae $\phi_1, \dots, \phi_{i-1}$. We proceed by induction on the length $n$ of the sequence. The unique interesting case is when  $\phi_n$  is obtained by applying the (Nec) rule to some $\phi_j$ with $1 \leq j < n$, i.e. $\phi_n = \Box \phi_j$. It is clear that $\phi_j \in ThK45({\mathbf{G}})$, and by the inductive hypothesis, $w(\phi_j) =1$. Of course,  $\Box \phi_j \in ThK45({\mathbf{G}})$ as well, and thus $\nu(\Box\xi) =1$ too. But since $\Box \phi_j \in X$, by definition of $w$, we hace $w(\psi) = w(\Box \phi_j) = h(u(\Box \phi_j)) = 1$. }
This finishes the proof of the lemma.
\end{proof2}

\noindent Completeness will follow from the next truth-lemma. 
%, whose proof is rather involved.

\begin{lemma} [Truth-lemma] \label{equation-joint} $e^v_{\varphi}(u,\psi )=u(\psi )$ for any $\psi \in
Sub(\varphi)$ and any $u\in W^{v}_\varphi$.
\end{lemma}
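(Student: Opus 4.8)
The plan is to prove the identity by induction on the complexity of $\psi$, exploiting that both the canonical valuation $u$ (viewed as a Gödel valuation on $Var\cup X$) and the semantic evaluation $e^v_\varphi(u,\cdot)$ are Gödel homomorphisms, so that all the genuine work is concentrated in the modal cases. For the base cases, $e^v_\varphi(u,\gbot)=0=u(\gbot)$ and $e^v_\varphi(u,p)=u(p)$ for $p\in Var$ hold by the very definition of $e^v_\varphi$. For the inductive step on a connective $\star\in\{\wedge,\vee,\to\}$, since subformulas of a formula in $Sub(\varphi)$ again lie in $Sub(\varphi)$, the induction hypothesis yields $e^v_\varphi(u,\psi\star\chi)=e^v_\varphi(u,\psi)\star e^v_\varphi(u,\chi)=u(\psi)\star u(\chi)=u(\psi\star\chi)$.

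It then remains to treat $\psi=\Box\theta$ with $\theta\in Sub(\varphi)$, the case $\psi=\Diamond\theta$ being dual. Since $\Box\theta\in X$ and $u\sim_\varphi v$ we have $u(\Box\theta)=v(\Box\theta)$, so by the induction hypothesis $e^v_\varphi(w,\theta)=w(\theta)$ for every $w\in W^v_\varphi$ it suffices to prove $\inf_{w\in W^v_\varphi}\{\pi^v_\varphi(w)\to w(\theta)\}=v(\Box\theta)$. One inequality is immediate from the definition of $\pi^v_\varphi$: for each $w$ we read off $\pi^v_\varphi(w)\leq v(\Box\theta)\to w(\theta)$, whence by residuation of the Gödel connectives $\min(\pi^v_\varphi(w),v(\Box\theta))\leq w(\theta)$, and therefore $v(\Box\theta)\leq \pi^v_\varphi(w)\to w(\theta)$; taking the infimum gives $v(\Box\theta)\leq e^v_\varphi(u,\Box\theta)$. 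Dually, the factor $w(\theta)\to v(\Diamond\theta)$ appearing in $\pi^v_\varphi$ yields $\min(\pi^v_\varphi(w),w(\theta))\leq v(\Diamond\theta)$ for every $w$, so $e^v_\varphi(u,\Diamond\theta)\leq v(\Diamond\theta)$.

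The converse inequalities $e^v_\varphi(u,\Box\theta)\leq v(\Box\theta)$ and $v(\Diamond\theta)\leq e^v_\varphi(u,\Diamond\theta)$ are the heart of the proof, and this is where I expect the main obstacle to lie. They amount to producing, for every $\varepsilon>0$, a witness world $w\in W^v_\varphi$ at which $\pi^v_\varphi(w)\to w(\theta)$ is within $\varepsilon$ of $v(\Box\theta)$ (respectively $\min(\pi^v_\varphi(w),w(\theta))$ is within $\varepsilon$ of $v(\Diamond\theta)$); concretely, a world where the degree $\pi^v_\varphi(w)$ stays above $w(\theta)$ while $w(\theta)$ is pushed down to $v(\Box\theta)$, so that the Gödel implication collapses to $w(\theta)$. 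The strategy is first to use the order-completeness of Gödel logic (Proposition~\ref{ordersoundness}(ii)) to obtain an auxiliary valuation $\nu$ on $Var\cup X$ with $\nu(ThK45(\mathbf{G}))=1$ that assigns $\theta$ and the relevant modal atoms their intended target values, and then to verify that $\nu$ satisfies conditions \textbf{a}--\textbf{d} of Lemma~\ref{normalization}. Applying that lemma converts $\nu$ into a genuine element $w$ of $W^v_\varphi$ whose values on the subformulas of $\varphi$ respect, up to the $\varepsilon$-thresholds, the order and bounds recorded in Properties 1--4.

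The delicate points, which I expect to consume most of the effort, are twofold. First, the auxiliary $\nu$ must be chosen consistently with $ThK45(\mathbf{G})$; here the simplification schemes of Proposition~\ref{simplif} and the ``empty-zone'' observation of Remark~\ref{emptyzone} constrain the admissible values of the modal atoms (in particular forbidding values strictly between $v(\Diamond\gtop)$ and $1$), and one must check that the target assignment respects these constraints. Second, one must confirm that Lemma~\ref{normalization} indeed produces, as $\varepsilon\to 0$, a family of witnesses whose values $\pi^v_\varphi(w)\to w(\theta)$ (resp.\ $\min(\pi^v_\varphi(w),w(\theta))$) converge to exactly $v(\Box\theta)$ (resp.\ $v(\Diamond\theta)$), so that the infimum and supremum are attained in the limit. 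Once both inequalities are secured for $\Box$ and $\Diamond$, the induction closes and the truth-lemma follows.
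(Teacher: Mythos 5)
Your plan reproduces the paper's architecture exactly: induction over $Sub(\varphi)$ viewed as propositional G\"odel formulas over $Var\cup X$, the easy inequalities $u(\Box \psi )\leq \inf_{w}\{\pi^c(w)\Rightarrow w(\psi )\}$ and $\sup_{w}\{\min(\pi^c(w),w(\psi ))\}\leq u(\Diamond \psi )$ obtained by residuation from the definition of $\pi^v_\varphi$, and the converse inequalities reduced to producing $\varepsilon$-witness worlds by combining Proposition~\ref{ordersoundness}(ii) with Lemma~\ref{normalization}. That is precisely how the paper proceeds (its Claims 1 and 2).

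However, there is a genuine gap at the one point where all the difficulty sits: how the auxiliary valuation $\nu$ is actually obtained. Proposition~\ref{ordersoundness}(ii) cannot ``assign $\theta$ and the relevant modal atoms their intended target values''; all it can deliver is a valuation making a given countable theory true while keeping countably many formulas strictly below $1$. To make it applicable one must first build a purely syntactic constraint theory --- the paper's $\Gamma_{\psi,u}$, consisting of the formulas $\chi\in X$ with $u(\chi)>\alpha$, the implications $\lambda\to\theta$ for $u(\lambda)\leq u(\Box\theta)$, the formulas $(\theta\to\lambda)\to\lambda$ for $u(\lambda)<u(\Box\theta)<1$, their $\Diamond$-counterparts, and $(\chi_1\to\chi_2)\to\chi_2$ for $u(\chi_2)<u(\chi_1)\leq\alpha$ --- then prove $u(\Box\xi)>\alpha$ for every $\xi\in\Gamma_{\psi,u}$ (this is where the $K45(\mathbf{G})$ theorems $FS2$, $P$, $T4$, $U_\Box$, $U_\Diamond$ of Proposition~\ref{simplif} are used), and finally establish $\Gamma_{\psi,u}\not\vdash_{K45(\mathbf{G})}\psi$ by the boxing argument: a derivation $\xi_1,\ldots,\xi_k\vdash\psi$ would yield $\Box\xi_1,\ldots,\Box\xi_k\vdash\Box\psi$ by $(Nec)$ and $(K_\Box)$, giving $\alpha<u(\Box\psi)=\alpha$ via Lemma~\ref{reduction} and Proposition~\ref{ordersoundness}(i), a contradiction. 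Only after this does Proposition~\ref{ordersoundness}(ii) produce $\nu$ with $\nu(\Gamma_{\psi,u}\cup ThK45(\mathbf{G}))=1$ and $\nu(\psi)<1$, and it is membership of the constraint formulas in $\Gamma_{\psi,u}$ that yields the order relations between $u$ and $\nu$ needed both to check conditions {\bf a}--{\bf d} of Lemma~\ref{normalization} and, afterwards, to verify $\pi^c(w)>w(\psi)$ (for Claim 1) or $\pi^c(w')\geq\alpha$ (for Claim 2). Your proposal names the two tools but omits the bridge between them, and that bridge --- encoding the order constraints as formulas whose necessitations $u$ evaluates above $\alpha$, so that the failure of $u(\Box\psi)$ to exceed $\alpha$ can be transferred through $(Nec)$ into consistency of the constraint theory with $\psi<1$ --- is the key idea of the whole proof; your appeal to Remark~\ref{emptyzone} plays no role in it.
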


\begin{proof2}
For simplicity, we will write $W^c$, $\pi^c$ and $e^c$ for $W^v_{\varphi}, \pi^v_{\varphi}$ and $e^v_{\varphi}$, respectively.  We prove the identity by
induction on the complexity of the formulas in $Sub(\varphi)$, considered 
now as elements
of $\mathcal{L}_{\square \Diamond }(Var)$. For $\gbot $ and the propositional
variables in $Sub(\varphi)$ the equation holds by definition. The only non trivial
inductive steps are:\ $e^{c}(u,\Box \psi)=u(\Box \psi)$ and $
e^{c}(u,\Diamond \psi)=u(\Diamond \psi)$ for $\Box \psi,\Diamond
\psi \in Sub(\varphi).$ By the inductive hypothesis we may assume that $
e^{c}(u^{\prime },\psi)=u^{\prime }(\psi)$ for every $u^{\prime }\in
W^c;$ thus we must prove
\begin{eqnarray}
\inf_{u^{\prime }\in W^c}\{\pi^c(u^{\prime })\Rightarrow u^{\prime }(\psi
)\}=u(\Box \psi )  \label{box} \\
\sup_{u^{\prime }\in W^c}\{\min(\pi^c(u^{\prime }), u^{\prime }(\psi
)) \}=u(\Diamond \psi )  \label{Diam}
\end{eqnarray}
By definition, $\pi^c(u^{\prime })\leq (v(\Box \psi )\Rightarrow u^{\prime
}(\psi ))$ and $\pi^c(u^{\prime })\leq (u^{\prime }(\psi )\Rightarrow
v(\Diamond \psi ))$ for any $\psi \in Sub(\varphi)$ and $u^{\prime }\in W;$
therefore, $u(\Box \psi) = v(\Box \psi) \leq (\pi^c(u^{\prime })\Rightarrow u^{\prime
}(\psi))$ and $\min(\pi^c(u^{\prime }), u^{\prime }(\psi)) \leq
v(\Diamond \psi) = u(\Diamond \psi).$ Taking  infimum over $u^{\prime }$ in the first
inequality and the supremum in the second we get
\begin{equation*}
u(\Box \psi )\leq \inf_{u^{\prime }\in W^c}\{\pi^c(u^{\prime })\Rightarrow
u^{\prime }(\psi )\}, \ \sup_{u^{\prime }\in W^c}\{\min(\pi^c(u^{\prime
}),  u^{\prime }(\psi ))\}\leq u(\Diamond \psi ).
\end{equation*}
Hence, if $u(\Box \psi )=1$ and $u(\Diamond \psi )=0$ we directly obtain (\ref
{box}) and (\ref{Diam}), respectively. Therefore, it only remains\ to prove
the next two claims for $\Box \psi ,\Diamond \psi \in Sub(\varphi)$.\medskip

\noindent \textbf{Claim 1}. \emph{If $u(\Box \psi )=\alpha <1$ then, for every $\varepsilon > 0$, there exists a valuation $w\in W^c$ such that $\pi^c(w) > w(\psi)$ and $w(\psi ) < \alpha + \varepsilon$, and thus $(\pi^c(w) \Rightarrow w(\psi ))< \alpha + \varepsilon$}.\medskip

\noindent \textbf{Claim 2. }\emph{If $u(\Diamond \psi )=\alpha >0$ then, for any small enough $\varepsilon >0,$ there exists a valuation $w'\in W^c$ such that $\min(w'(\psi ),  \pi^c(w')) \geq \alpha -\varepsilon$}.

\medskip \noindent 
The proofs of these two claims are rather involved and they can be found  in  the appendix.
\end{proof2}

\begin{theorem} [Weak completeness $K45(\mathbf{G})$]
\label{WeakCompleteness} For any formula $\varphi $ in $\mathcal{L}_{\square \Diamond }$%
, $\models _{\Pi\mathcal{G}}\varphi $ iff $\vdash _{K45(\mathbf{G})}\varphi .$
\end{theorem}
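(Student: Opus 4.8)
The plan is to establish the two implications separately: the right-to-left direction is soundness, and the left-to-right direction is completeness, the latter being the payoff of the canonical construction and the two lemmas assembled above.

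For soundness I would first observe that every $\Pi G$-model (Definition~\ref{simplgodelframe}) is a member of the class $\mathcal G$ of G\"odel Kripke models, namely by putting $R(w,w'):=\pi(w')$, under which the clauses for $\Box$ and $\Diamond$ coincide. Hence, by the soundness half of Theorem~\ref{th:compGK}, every theorem of $K(\mathbf{G})$ is already $\Pi\mathcal{G}$-valid, and it only remains to check the four extra axioms $(4_\Box),(4_\Diamond),(5_\Box),(5_\Diamond)$ and the two rules. Here I would use the observation recorded after Definition~\ref{simplgodelframe} that in a $\Pi G$-model $e(w,\Box\psi)$ and $e(w,\Diamond\psi)$ do not depend on $w$; writing them $B(\psi)$ and $D(\psi)$, the elementary G\"odel-algebra facts $x\Rightarrow y\ge y$ and $\min(x,y)\le y$ give at once $B(\Box\psi)=\inf_{w'}(\pi(w')\Rightarrow B(\psi))\ge B(\psi)$, $D(\Diamond\psi)=\min(\sup_{w'}\pi(w'),D(\psi))\le D(\psi)$, $D(\Box\psi)\le B(\psi)$ and $D(\psi)\le B(\Diamond\psi)$, which are exactly the four axioms. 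Modus ponens preserves $\Pi\mathcal{G}$-validity as in any G\"odel Kripke model, and necessitation does because $e(w,\psi)=1$ for all $w$ forces $e(w,\Box\psi)=\inf_{w'}(\pi(w')\Rightarrow 1)=1$.

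For completeness I would argue contrapositively. Suppose $\not\vdash_{K45(\mathbf{G})}\varphi$. By Lemma~\ref{reduction} this means $ThK45(\mathbf{G})\not\vdash_{\mathbf G}\varphi$, so by the order-completeness of G\"odel logic, Proposition~\ref{ordersoundness}(ii), there is a G\"odel valuation $v$ on $Var\cup X$ with $v(ThK45(\mathbf{G}))=1$ and $v(\varphi)<1$. Fixing this $v$, I form the canonical $\Pi G$-model $\m^v_\varphi=\langle W^v_\varphi,\pi^v_\varphi,e^v_\varphi\rangle$ defined above, which is by construction a genuine possibilistic G\"odel Kripke model. Since $v\sim_\varphi v$ trivially and $v(ThK45(\mathbf{G}))=1$, we have $v\in W^v_\varphi$, so $W^v_\varphi$ is non-empty and contains a candidate counterexample world. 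Finally, as $\varphi\in Sub(\varphi)$, the Truth-lemma (Lemma~\ref{equation-joint}) yields $e^v_\varphi(v,\varphi)=v(\varphi)<1$; thus $\varphi$ is not valid in $\m^v_\varphi$ and therefore $\not\models_{\Pi\mathcal{G}}\varphi$, completing the contrapositive.

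The main obstacle does not lie in this assembly, which is routine once the lemmas are in hand, but in the two results it consumes. The substance of the Truth-lemma is, after the easy monotonicity inequalities already shown in its proof, precisely Claim~1 and Claim~2: that the infimum and supremum defining $e^v_\varphi(u,\Box\psi)$ and $e^v_\varphi(u,\Diamond\psi)$ are approximated arbitrarily closely by worlds actually lying in $W^v_\varphi$. Producing such witnessing worlds is exactly the role of the normalization Lemma~\ref{normalization}, whose inductive step on formula complexity (deferred to the appendix) is the genuinely delicate point; so I would expect the real work to be hidden there rather than in the final theorem, where the only things to verify — that $v\in W^v_\varphi$ and that $\varphi$ is itself a subformula covered by the Truth-lemma — are immediate.
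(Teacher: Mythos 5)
Your proposal is correct and follows essentially the same route as the paper: soundness in one direction, and in the other the contrapositive argument via Lemma~\ref{reduction}, Proposition~\ref{ordersoundness}(ii) to obtain a valuation $v$ with $v(ThK45(\mathbf{G}))=1$ and $v(\varphi)<1$, membership of $v$ in the canonical model $\m^v_\varphi$, and the Truth-lemma (Lemma~\ref{equation-joint}) to conclude $e^v_\varphi(v,\varphi)<1$. The only difference is that you spell out the soundness checks for $(4_\Box)$, $(4_\Diamond)$, $(5_\Box)$, $(5_\Diamond)$ and the rules, which the paper dismisses as easy; your verifications are correct.
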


\begin{proof2} One direction is soundness, and it is easy to check that the axioms are valid in the class $\Pi\mathcal{G}$ of models.  As for the other direction, assume  $\not\vdash _{K45(\mathbf{G})}\varphi .$ Then $ThK45(\mathbf{G})\not\vdash _G\varphi $ by Lemma \ref{reduction}, and thus 
there is, by Proposition~\cite[Proposition 3.1]{CaiRod2010}, a G\"odel valuation $v:Var\cup X\rightarrow [0,1]$ such that $v(\varphi )< v(ThK45(\mathbf{G}))=1.$ Then $v$ is a world of the canonical model $\m_\varphi^v$ and by Lemma~\ref{equation-joint}, $e_{\varphi}^v(v,\varphi )=v(\varphi )<1.$ Thus $\not\models_{\Pi \cal G}\varphi$.
\end{proof2}

%Actually, the same proof for weak completeness easily generalizes to get 
%the same result for any axiomatic extension of $K45(\mathbf{G})$. 

In addition, it is also easy to generalize last proof for getting completeness for deductions from finite theories as it is shown by the next theorem:

\begin{theorem} [Finite strong completeness $K45(\mathbf{G})$]
\label{JointCompleteness} For any
finite theory $T$ and formula $\varphi $ in $\mathcal{L}_{\square \Diamond }$, we have: $T\models _{\Pi\mathcal{G}}\varphi $ iff $T\vdash _{K45(\mathbf{G})}\varphi $. 
%\begin{itemize}
%%
%\item $T\models _{\Pi\mathcal{G}}\varphi $ implies $T\vdash _{K45(\mathbf{G})}\varphi $
%\item $T\models _{\Pi^*\mathcal{G}}\varphi $ implies $T\vdash _{KD45(\mathbf{G})}\varphi $
%\item $T\models _{\Pi^5\mathcal{G}}\varphi $ implies $T\vdash _{KT45(\mathbf{G})}\varphi $
%\end{itemize}
\end{theorem}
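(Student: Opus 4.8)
The plan is to reduce finite strong completeness to the already-established weak completeness (Theorem~\ref{WeakCompleteness}) by means of the deduction-theorem-style reduction recorded in Lemma~\ref{reduction2}. Concretely, I would argue as follows. Soundness is again the easy direction: if $T \vdash_{K45(\mathbf{G})} \varphi$ then, since every axiom of $K45(\mathbf{G})$ is $\Pi\mathcal{G}$-valid and the rules preserve validity in each model (bearing in mind that $(Nec)$ is only applied to theorems), any $\Pi G$-model making all of $T$ true at every world will also make $\varphi$ true at every world, so $T \models_{\Pi\mathcal{G}} \varphi$.

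For the converse I would contrapose. Assume $T \not\vdash_{K45(\mathbf{G})} \varphi$. Since $T$ is finite, Lemma~\ref{reduction2} gives $\not\vdash_{K45(\mathbf{G})} T^{\land} \to \varphi$, where $T^{\land} = \bigwedge\{\psi \mid \psi \in T\}$. Now apply weak completeness (Theorem~\ref{WeakCompleteness}) to the single formula $T^{\land} \to \varphi$: we obtain $\not\models_{\Pi\mathcal{G}} T^{\land} \to \varphi$, i.e.\ there is a $\Pi G$-model $\langle W, \pi, e\rangle$ and a world $w \in W$ with $e(w, T^{\land} \to \varphi) < 1$. By the G\"odel semantics of implication this means $e(w, T^{\land}) > e(w, \varphi)$, and since $e(w, T^{\land}) = \min_{\psi \in T} e(w, \psi)$, we get both $e(w, \psi) > e(w, \varphi)$ for every $\psi \in T$ and, in particular, $e(w, \varphi) < 1$.

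The one subtlety to handle is that $T \models_{\Pi\mathcal{G}} \varphi$ is a statement about \emph{local} truth at worlds rather than about the designated value $1$ only at a single witness. I would make explicit the convention that $T \models_{\Pi\mathcal{G}} \varphi$ means: for every $\Pi G$-model and every world $w$, if $e(w, \psi) = 1$ for all $\psi \in T$ then $e(w, \varphi) = 1$. Under this reading the model just produced does not immediately refute $T \models_{\Pi\mathcal{G}} \varphi$, because at $w$ the premises need not all equal $1$. To bridge this gap I would invoke the stronger order-theoretic form of completeness available for G\"odel logic (Proposition~\ref{ordersoundness}), which is what actually underlies the canonical-model construction: the witnessing valuation $v$ in the proof of Theorem~\ref{WeakCompleteness} can be chosen so that $v(ThK45(\mathbf{G})) = 1$ while $v(T^{\land} \to \varphi) < 1$, and rerunning the canonical construction with the \emph{finitely many} extra hypotheses from $T$ adjoined (i.e.\ taking $Sub(\varphi)$ enlarged to $Sub(T^{\land} \to \varphi)$ and the canonical world $v$ satisfying $v(T) = 1$) yields a world of the canonical model at which all of $T$ evaluates to $1$ yet $\varphi$ does not.

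I expect this last point to be the main obstacle: one must verify that the entire machinery of Lemma~\ref{normalization} and the Truth-lemma (Lemma~\ref{equation-joint}) goes through verbatim when $\varphi$ is replaced by $T^{\land} \to \varphi$, so that the designated world $v$ lies in $W^v_{T^\land \to \varphi}$ and satisfies $e^v(v, \psi) = v(\psi) = 1$ for each $\psi \in T$ while $e^v(v, \varphi) = v(\varphi) < 1$. Since $T$ is finite, $T^{\land}$ is a single formula of the language and $Sub(T^{\land} \to \varphi)$ is finite, so no step of the construction is affected by the enlargement; the proof is therefore genuinely just the weak-completeness proof applied to one auxiliary formula, which is why the authors call the generalization ``easy.'' The only care required is to confirm that the choice of $v$ with $v(T) = 1$ is compatible with $v(ThK45(\mathbf{G})) = 1$ and $v(\varphi) < 1$, which is exactly what Lemma~\ref{reduction2} together with part~(ii) of Proposition~\ref{ordersoundness} guarantees.
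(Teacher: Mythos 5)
Your proposal is correct and, in its final form, is essentially the paper's own proof: after rightly observing that a black-box appeal to weak completeness for $T^{\land}\to\varphi$ fails (the countermodel world witnesses $e(w,T^{\land})>e(w,\varphi)$ but need not make the premises equal to $1$), you fall back on re-running the canonical construction over the enlarged, still finite subformula set, starting from a G\"odel valuation $v$ with $v(T)=1$, $v(ThK45(\mathbf{G}))=1$ and $v(\varphi)<1$, and invoking Lemma~\ref{normalization} and the Truth Lemma unchanged. This is exactly what the paper does (with $Sub(T,\varphi)$ in place of your $Sub(T^{\land}\to\varphi)$, an immaterial difference), so the two arguments coincide.
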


\begin{proof2}  The proof is an easy adaptation of the one of weak completeness. We will only mention the main differences. If $T \not\vdash_{K45(\mathbf{G})} \varphi$, by completeness of G\"odel logic and Lemma \ref{reduction2}, there exists a G\"odel valuation $v$  such that $ v(ThK45(\mathbf{G}))=1$, $v(T) = 1$ and $v(\varphi)<1$. Now, in order to build a canonical model, we need to take into account not only $v$ and $\varphi$ but also $T$. To do that we have to follow the very same steps as before but replacing everywhere the set $Sub(\varphi)$ of subformulas of $\varphi$ by the larger set $Sub(T, \varphi)$ of subformulas of $T 
\cup \{\varphi\}$, i.e. $Sub(T, \varphi) = \bigcup_{\psi \in T \cup \{\varphi\}} Sub(\psi)$. Let us denote by $\m^v_{T, \varphi} = \langle W^c, \pi^c, e^c \rangle$ the canonical model built accordingly, where $v \in W^c$. Note that 
there is no need of any modification in neither  Lemma \ref{normalization} nor the Truth Lemma \ref{equation-joint} (except for replacing $Sub(\varphi)$ by $Sub(T, \varphi)$ in its statement). Then the theorem follows by observing that
%, if $\m^v_{T, \varphi} = \langle W^c, \pi^c, e^c \rangle \in {\Pi\mathcal{G}}$ is the built canonical model, with $v \in W^c$, then 
Lemma \ref{equation-joint} guarantees that $e^c(v, \psi) = v(\psi) = 1$ for all $\psi \in T$ while $e^c(v, \varphi) = v(\varphi) < 1$. Therefore, $T \not\models _{\Pi\mathcal{G}}\varphi $. 
\end{proof2}
%\olim{Doesn't this theorem follow from Theorem~\ref{JointCompleteness} using Lemma~\ref{reduction}(ii)?} \ricardo{Sorry, I don't understand the question}
%
%\blue{Lluis: I think Olim is referring to a proof of this kind:
%\begin{proof2} By (ii) of Lemma~\ref{reduction}, $T\vdash _{K45(\mathbf{G})}\varphi $ iff $\vdash _{K45(\mathbf{G})} T^{\land} \to\varphi $, and by Theorem~\ref{JointCompleteness},  $\vdash _{K45(\mathbf{G})} T^{\land} \to\varphi $ iff $\models _{\Pi\mathcal{G}} T^{\land} \to\varphi $. To finish the proof we need to show that $\models _{\Pi\mathcal{G}} T^{\land} \to\varphi $ iff $T \models _{\Pi\mathcal{G}} \varphi $ \ldots  but I don't know how to prove the right-to-left direction 
%\end{proof2}
%\olim{Olim: you are right, Lluis, this is what I had in mind. However, I mistakenly thought that this deduction theorem would be easy to prove semantically, but I can also not figure out how to prove that that $T \models _{\Pi\mathcal{G}} \varphi $ implies $\models _{\Pi\mathcal{G}} T^{\land} \to\varphi $. So let's just leave the proof of Theorem~\ref{JointCompleteness} as is.}
%}

Actually, the proofs for weak and finite strong completeness of  $K45(\mathbf{G})$ with respect to the class of simplified possibilistic models $\Pi\mathcal{G}$ easily generalize to completeness of $KD45(\mathbf{G})$, the axiomatic extension  $K45(\mathbf{G})$  with axiom $D$, with respect to the class of {\em normalized} possibilistic models $\Pi^*\mathcal{G}$. 

\begin{corollary} [Finite strong completeness $KD45(\mathbf{G})$]
For any
finite theory $T$ and formula $\varphi $ in $\mathcal{L}_{\square \Diamond }$, we have: $T\models _{\Pi^*\mathcal{G}}\varphi $ iff $T\vdash _{KD45(\mathbf{G})}\varphi $. 
\end{corollary}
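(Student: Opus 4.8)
The plan is to reduce the corollary to the already-established finite strong completeness of $K45(\mathbf{G})$ (Theorem~\ref{JointCompleteness}), showing that the only extra ingredient supplied by axiom $(D)$ is exactly the normalisation of the canonical possibility distribution. Soundness is the routine direction: I would verify that the distinguished axiom $(D) = \Diamond\gtop$ is valid in every $\Pi^*\mathcal{G}$-model, which is immediate since for a normalised $\pi$ we have $e(w,\Diamond\gtop) = \sup_{w'\in W}\min(\pi(w'),1) = \sup_{w'\in W}\pi(w') = 1$; together with the soundness of the $K45(\mathbf{G})$ axioms over $\Pi\mathcal{G} \supseteq \Pi^*\mathcal{G}$ this gives $T\vdash_{KD45(\mathbf{G})}\varphi \Rightarrow T\models_{\Pi^*\mathcal{G}}\varphi$.

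For the completeness direction I would run the entire canonical model construction from the proof of Theorem~\ref{JointCompleteness} verbatim, but now starting from a valuation $v$ with $v(ThKD45(\mathbf{G})) = 1$, $v(T) = 1$ and $v(\varphi) < 1$, whose existence follows from Lemma~\ref{reduction} adapted to $KD45(\mathbf{G})$ and the completeness of G\"odel logic (Proposition~\ref{ordersoundness}). Since $KD45(\mathbf{G})$ extends $K45(\mathbf{G})$, every theorem used in the construction and in the Truth-Lemma remains available, so Lemmas~\ref{normalization} and~\ref{equation-joint} apply without change and yield $e^c(v,\psi) = v(\psi)$ for all $\psi \in Sub(T,\varphi)$; in particular $e^c(v,\psi)=1$ for $\psi\in T$ and $e^c(v,\varphi)<1$, so $\varphi$ fails in $\m^v_{T,\varphi}$.

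The one genuinely new point — and the step I expect to be the crux — is verifying that the canonical frame so obtained is in fact \emph{normalised}, i.e.\ that $\sup_{u\in W^c}\pi^c(u) = 1$, so that $\m^v_{T,\varphi}$ is a legitimate $\Pi^*\mathcal{G}$-model and not merely a $\Pi\mathcal{G}$-model. The key observation is that since $(D)$, equivalently $(D')\colon \Box\varphi\to\Diamond\varphi$, is now in $ThKD45(\mathbf{G})$, the valuation $v$ satisfies $v(\Diamond\gtop) = 1$; by the Truth-Lemma $e^c(v,\Diamond\gtop) = v(\Diamond\gtop) = 1$, and unfolding the modal clause gives $\sup_{u\in W^c}\min(\pi^c(u), e^c(u,\gtop)) = \sup_{u\in W^c}\pi^c(u) = 1$, which is precisely normalisation. (Alternatively, one reads off normalisation directly from Remark~\ref{emptyzone}, since the presence of $(D)$ forces $v(\Diamond\gtop)=1$ and collapses the ``empty zone'' below it.) Once normalisation is secured, the conclusion $T\not\models_{\Pi^*\mathcal{G}}\varphi$ follows exactly as before, completing the contrapositive.
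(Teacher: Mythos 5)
Your soundness direction and the overall reduction to the $K45(\mathbf{G})$ machinery are fine, and you correctly identified normalisation of the canonical distribution as the crux. But the step you propose for it does not go through as stated: you apply the Truth-Lemma (Lemma~\ref{equation-joint}) to the formula $\Diamond\gtop$, yet that lemma only establishes $e^c(u,\psi)=u(\psi)$ --- and, in particular, the identity \eqref{Diam} giving $\sup_{u'\in W^c}\min(\pi^c(u'),u'(\psi))=u(\Diamond\psi)$ --- for formulas $\psi$ with $\Diamond\psi \in Sub(T,\varphi)$. In general $\Diamond\gtop \notin Sub(T,\varphi)$, so nothing in the construction lets you unfold $v(\Diamond\gtop)=1$ into $\sup_{u\in W^c}\pi^c(u)=1$. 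The obstruction is structural, not cosmetic: the canonical distribution $\pi^c$ is itself defined as an infimum ranging only over $Sub(T,\varphi)$, and the witnessing worlds with large $\pi^c$-value are produced by Claim~2 of the Truth-Lemma, whose proof (via $\Gamma'_{\psi,u}$ and $\Delta_\varphi$) is again restricted to diamond-formulas in the subformula closure. Your fallback via Remark~\ref{emptyzone} does not repair this either: that remark constrains the values a single valuation assigns to modal formulas; it says nothing about the supremum of $\pi^c$ over the worlds of the canonical model.

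The paper closes exactly this gap with a small but essential trick that your proposal is missing: instead of refuting $\varphi$, one refutes $\varphi' = \Diamond\gtop \to \varphi$, which is provably/semantically interchangeable with $\varphi$ over $KD45(\mathbf{G})$ and $\Pi^*\mathcal{G}$ (since $\Diamond\gtop$ is an axiom, valid in all normalised models). This forces $\Diamond\gtop \in Sub(\varphi')$, so the canonical model is built over a subformula set containing $\Diamond\gtop$, the Truth-Lemma legitimately applies to it, and normalisation follows from $u(\Diamond\gtop)=1$ exactly as you intended. A second, smaller omission: you assert Lemma~\ref{normalization} applies ``without change,'' but its conclusion now requires $w(ThKD45(\mathbf{G}))=1$, i.e.\ one must additionally check $w(\Diamond\gtop)=1$ for the constructed valuation $w$; the paper does this by noting $\Diamond\gtop$ is equivalent to $\Box\gtop\to\Diamond\gtop$, an implication between formulas of $X$, so the existing case analysis covers it. With these two repairs your argument becomes the paper's proof.
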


\begin{proof} %The proofs of Theorems \ref{WeakCompleteness} and \ref{JointCompleteness} for $K45(\mathbf{G})$ only need some small adaptions. 
We only consider the proof of weak completeness, its extension to a proof 
of finite strong completeness can be then devised as in Theorem \ref{JointCompleteness}. Indeed, the proof only needs small adaptations to the one 
for the case of $K45(\mathbf{G})$. {To start with, for technical reasons that will become clear later, we will actually prove that $\not\vdash _{KD45(\mathbf{G})} \varphi'$ implies $\not\models _{\Pi^*\mathcal{G}} \varphi'$, where $\varphi' = \Diamond \top \to \varphi$. Note that since $\Diamond \top$ is an axiom of $KD45(\mathbf{G})$ and  is valid  in the class of frames $\Pi^*\mathcal{G}$, the former condition is indeed equivalent to prove that $\not\vdash _{KD45(\mathbf{G})} \varphi$ implies $\not\models _{\Pi^*\mathcal{G}} \varphi$. }
Next, in the definition of the set of worlds $W^v_{\varphi'}$  of the canonical model $\m^v_{\varphi'}$, we need to replace the condition $u(ThK45(\mathbf{G}))=1$ by $u(ThKD45(\mathbf{G}))=1$, i.e.  we define 
$$W^v_{\varphi'} = \{u \in \lbrack 0,1]^{Var\cup X} \mid u \sim_{\varphi'} v \mbox{ and } u(ThKD45(\mathbf{G}))=1 \},$$ 
and analogously in the condition $\bf a.$ of Lemma \ref{normalization}. Then in the proof of this lemma (item (ii) after the {\bf Claim}), one has to further check that $w(\Diamond \gtop) = 1$. Observe that $\Diamond \gtop$ is provably equivalent to $\square \gtop \to \Diamond \gtop$, but this formula is of the form $\phi \to \psi$ for some $\phi, \psi \in X$, and hence it falls under the cases already considered in the proof. Thus Lemma \ref{normalization} holds, and the same happens with Lemma \ref{equation-joint}, that holds as well without any modification. Moreover, Lemma \ref{equation-joint} allows us to 
prove that the canonical model belongs in fact to the class $\Pi^*\mathcal{G}$ of normalized possibilistic models. Indeed, by \eqref{Diam} it follows that 
$$\sup_{u^{\prime }\in W^v_{\varphi'}}\{\min(\pi^c(u^{\prime }), u^{\prime }(\psi)) \}=u(\Diamond \psi )$$ { for every $\Diamond \psi \in Sub(\varphi')$, and since $\Diamond \top \in Sub(\varphi')$ and $u \in W^v_{\varphi'}$ (and hence $u(\Diamond \gtop) = 1$), we finally have $$ 1 = u(\Diamond \gtop) = \sup_{u^{\prime }\in W^v_{\varphi'}}\{\min(\pi^c(u^{\prime }), u^{\prime }(\gtop)) \}=  \sup_{u^{\prime }\in W^v_{\varphi'}} \pi^c(u^{\prime }), $$
in other words, $\pi^c$ is normalized and thus $\m^v_{\varphi'} \in \Pi^*\mathcal{G}$. 
In summary, we have found a model $\m^v_{\varphi'} = \langle W^v_{\varphi'}, \pi^c, e_{\varphi'}^v \rangle \in \Pi^*\mathcal{G}$ and a world $v \in W^v_{\varphi'}$ such that $e_{\varphi'}^v(v, \varphi') = e_{\varphi'}^v(v, \varphi) < 1$, and therefore $\not\models _{\Pi^*\mathcal{G}} \varphi$. }
%\olim{Doesn't this last step require that $\di \gtop \in Sub(\varphi)$? Because~\eqref{Diam} only holds for $\di \psi \in Sub(\varphi)$? I don't think this should be a big issue: I think we can just define $Sub(\varphi)$ to include this formula and it should not change much.} 
\end{proof}

%\ricardo{We would like to  finish this section by extending the same proof for weak and finite strong completeness for $KT45(\mathbf{G})$. For that,we need to adapt some details of our original proof. First, we take the same definition of canonical models by considering $ThKT45(\mathbf{G})$ instead of  $ThK45(\mathbf{G})$. Next, we follow the same idea on the Lemma \ref{normalization}, changing condition {\bf a.} for: $\nu(ThKT45(\mathbf{G})) = 1$. It is easy to check most of the proof continuous working except for the details that we need to prove $w(\Box \varphi \to \varphi) = w(\varphi \to \Diamond\varphi)=1$ but again it is easy to verify because we can use the same reasoning as in (ii).  All the rest of the proof is working well. Then, we are are able to proof the following:
%
%\begin{theorem} [Weak completeness $KT45(\mathbf{G})$]
%\label{WeakCompletenessT45} For any formula $\varphi $ in $\mathcal{L}_{\square \Diamond }$:
%$$\models _{\Pi^5\mathcal{G}}\varphi \mbox{ iff } \vdash _{KT45(\mathbf{G})}\varphi $$
%\end{theorem}
%
%
%It is worth noticing that the canonical model in the case of $KT45(\mathbf{G})$ is {\it universal} because we have $\forall w \in W^c: \pi(w) = 1$ and by using the canonical argument we can conclude that $KT45(\mathbf{G})$ is complete with respect to this subclass of Possibilistic models. This result is fully well known in the literature and only it is included for completeness.
%}

{We would like to  finish this section by noticing that the same kind of proof for weak and finite strong completeness for $K45$ can also be easily adapted for the logic $KT45(\mathbf{G})$, that is in fact equivalent to $KT5(\mathbf{G})$, since Axiom (4) is derivable in $KT5(\mathbf{G})$. For that, we need to adapt some details of our original proof. First, we can take the same definition of the canonical model but considering $ThKT5(\mathbf{G})$ instead of  $ThK45(\mathbf{G})$. Next, we follow with Lemma \ref{normalization} where we change condition {\bf a.} by $\nu(ThKT5(\mathbf{G})) = 1$. It is easy to check that the same proof goes through except  that we need to prove now that $w(\Box \varphi \to \varphi) = w(\varphi \to \Diamond\varphi)=1$. But again this is easy to be verified,  %because we can use the same reasoning as in (ii).  
and the rest of the proof is working well. Then, we are are able to prove the following:

\begin{theorem} [Weak completeness $KT45(\mathbf{G})$]
\label{WeakCompletenessT45} For any formula $\varphi $ in $\mathcal{L}_{\square \Diamond }$:
$$\models _{\Pi^5\mathcal{G}}\varphi \mbox{ iff } \vdash _{KT5(\mathbf{G})}\varphi $$
\end{theorem}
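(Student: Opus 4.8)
The plan is to mirror, \emph{mutatis mutandis}, the canonical-model argument already carried out for $K45(\mathbf{G})$ and $KD45(\mathbf{G})$, replacing the theory $ThK45(\mathbf{G})$ by $ThKT5(\mathbf{G})$ throughout. Recall that $KT45(\mathbf{G}) = KT5(\mathbf{G})$, since $(4_\Box)$ and $(4_\Diamond)$ are derivable once $(T)$ and $(5)$ are present, and that $\Pi^5\mathcal{G}$ is the subclass of $\Pi\mathcal{G}$ whose models have $\pi \equiv 1$: this is forced by the $(T)$-axioms, for with $\pi(w')=1$ the clauses collapse to $e(w,\Box\varphi) = \inf_{w'} e(w',\varphi) \le e(w,\varphi)$ and $e(w,\Diamond\varphi) = \sup_{w'} e(w',\varphi) \ge e(w,\varphi)$, which is exactly $(T_\Box)$ and $(T_\Diamond)$. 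Soundness is then the routine direction. For completeness I would assume $\not\vdash_{KT5(\mathbf{G})} \varphi$ and, via Lemma~\ref{reduction} and Proposition~\ref{ordersoundness}, fix a valuation $v$ with $v(ThKT5(\mathbf{G})) = 1$ and $v(\varphi) < 1$, then build the canonical model $\m^v_\varphi = \langle W^c, \pi^c, e^c\rangle$ exactly as before but with $W^c = \{u \mid u \sim_\varphi v,\ u(ThKT5(\mathbf{G})) = 1\}$.

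The two lemmas driving the argument must then be re-established for this theory. In Lemma~\ref{normalization} I would change condition \textbf{a} to $\nu(ThKT5(\mathbf{G})) = 1$; the construction of the surgical valuation $w$ from $g$ and $h$ is untouched and Properties 1--6 are proved verbatim. The one genuinely new proof obligation in part (ii) is that the modified $w$ still validates the $(T)$-schemes, i.e.\ $w(\Box\theta \to \theta) = w(\theta \to \Diamond\theta) = 1$ for every $\theta$. Since $\nu$ satisfies $(T)$ we have $\nu(\Box\theta) \le \nu(\theta)$, and I would split on its value: if $\nu(\Box\theta) < 1$, Property 3 immediately gives $w(\Box\theta) \le w(\theta)$; the remaining case $\nu(\Box\theta) = \nu(\theta) = 1$ is where I would invoke $u(ThKT5(\mathbf{G})) = 1$, which yields $u(\Box\theta) \le u(\theta)$, together with the monotonicity of $h$.

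This last case is the main obstacle, and I would not dismiss it as routine. Properties 3 and 4 give nothing when both arguments evaluate to $1$ under $\nu$, and the value $w(\Box\theta) = h(u(\Box\theta))$ is assigned to the modal atom $\Box\theta$ \emph{independently} of $w(\theta)$; the order-transfer that works for modal atoms (Properties 5--6) does not automatically survive to a composite $\theta$, since a low-valued non-modal subcomponent of $\theta$ can pull $w(\theta)$ below $w(\Box\theta)$. Making this case go through cleanly is precisely the point on which the whole extension hinges, so the care must be spent here; the dual inequality for $(T_\Diamond)$ is then handled symmetrically.

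Once Lemma~\ref{normalization} is secured, the Truth-lemma~\ref{equation-joint} carries over without change (only $Sub(\varphi)$ and $ThKT5(\mathbf{G})$ appear), giving $e^c(u,\psi) = u(\psi)$ for all $\psi \in Sub(\varphi)$ and $u \in W^c$. The decisive and clean payoff is then that $\m^v_\varphi \in \Pi^5\mathcal{G}$, i.e.\ $\pi^c \equiv 1$: for any $u \in W^c$ and $\psi \in Sub(\varphi)$, the theorems $\Box\psi \to \psi$ and $\psi \to \Diamond\psi$ of $KT5(\mathbf{G})$ give $u(\Box\psi) \le u(\psi) \le u(\Diamond\psi)$, and $u \sim_\varphi v$ gives $v(\Box\psi) = u(\Box\psi)$ and $v(\Diamond\psi) = u(\Diamond\psi)$; hence $v(\Box\psi) \Rightarrow u(\psi) = 1$ and $u(\psi) \Rightarrow v(\Diamond\psi) = 1$, so every term in the infimum defining $\pi^c(u)$ equals $1$. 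Observe that, unlike the $KD45(\mathbf{G})$ case, no auxiliary $\Diamond\gtop \to \varphi$ trick is needed, because the $(T)$-theorems already apply to each subformula directly. Assembling everything, $v \in W^c$ and $e^c(v,\varphi) = v(\varphi) < 1$, so $\not\models_{\Pi^5\mathcal{G}} \varphi$, which is the contrapositive of completeness; with soundness this yields the stated equivalence.
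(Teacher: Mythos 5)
Your proposal follows exactly the route the paper itself takes for this theorem: replace $ThK45(\mathbf{G})$ by $ThKT5(\mathbf{G})$ in the definition of the canonical model and in condition \textbf{a} of Lemma~\ref{normalization}, rerun the whole argument, and conclude by noting that the resulting countermodel lies in $\Pi^5\mathcal{G}$. Two of your additions are correct and worth keeping: the identification of $\Pi^5\mathcal{G}$ with the class of universal frames ($\pi \equiv 1$), and the clean verification that $\pi^c \equiv 1$ in the canonical model, obtained from $u(\Box\psi) \le u(\psi) \le u(\Diamond\psi)$ (the $T$-theorems under $u$) together with $u \sim_\varphi v$; this is indeed why no analogue of the $\Diamond\gtop \to \varphi$ trick from the $KD45(\mathbf{G})$ proof is needed here.

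However, the step you flag and leave open is a genuine gap, and it is precisely the step the paper dismisses with ``this is easy to be verified.'' To get $w \in W^v_\varphi$ one must prove $w(\Box\theta \to \theta) = w(\theta \to \Diamond\theta) = 1$ for \emph{every} formula $\theta$. When $\nu(\Box\theta) < 1$, Property 3 settles it, as you say. But when $\nu(\Box\theta) = \nu(\theta) = 1$, one has $w(\Box\theta) = h(u(\Box\theta))$, while for a \emph{composite} $\theta$ the lemma only guarantees $w(\theta) \ge \delta + \varepsilon$ (Property 1); Properties 5--6 are available only when both formulas lie in $Var \cup X$. Concretely, take $\theta = (p \to q) \to r$ with $p,q,r \in Var$, $\nu(p) < 1 = \nu(q) = \nu(r)$, $u(p) > u(q)$ and $u(q) \le u(r) < 1$. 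Then $w(p) = g(\nu(p)) < \delta + \varepsilon \le w(q)$, so $w(p \to q) = 1$ and $w(\theta) = h(u(r))$, whereas $w(\Box\theta) = h(u(\Box\theta))$; nothing in Properties 1--6, nor in conditions \textbf{b}--\textbf{d} (which constrain $\nu$ only on modal atoms, leaving propositional variables unconstrained), visibly rules out $u(\Box\theta) > u(r)$, in which case $w(\Box\theta) > w(\theta)$ and $w \notin W^v_\varphi$. Closing this requires a genuinely new inductive invariant --- for instance ``$\nu(\theta) = 1$ implies $w(\theta) \ge h(u(\theta))$'', whose implication case in turn needs Property 5 extended to arbitrary formulas under the extra hypothesis $u(\theta_1) \le u(\theta_2)$ --- together with an argument that the lemma's hypotheses (condition \textbf{b} interacting with the $T$-theorems satisfied by both $u$ and $\nu$) exclude configurations like the one above. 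Neither your proposal nor the paper supplies this, so your diagnosis of where the difficulty sits is accurate, but the proof as written is incomplete at exactly that point.
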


\noindent where $\Pi^5\mathcal{G}$ is the class of possibilistic frames $(W, \pi)$ that validate axioms $(T_\Box)$ and $(T_\Diamond)$. However this result is not new, since it turns out that  $(W, \pi)$ validates $(T_\Box)$ and $(T_\Diamond)$ iff $\pi$ is such that $\pi(w) = 1$ for every $w \in W$. In other words, $\Pi^5\mathcal{G}$ is in fact the class of universal models,  the simplified semantics for $S5(\mathbf G)$, a result that is well-known in the literature \cite{CMRT19}. 
%It is worth noticing that the canonical model in the case of $KT45(\mathbf{G})$ is {\it universal} because we have $\forall w \in W^c: \pi(w) = 1$ and by using the canonical argument we can conclude that $KT45(\mathbf{G})$ is complete with respect to this subclass of Possibilistic models. This result is fully well known in the literature and only it is included for completeness.
}

\section{Decidability} \label{sec:decidability}
So far, we have shown that $Val( \Pi{\mathcal{ G}}) = ThK45(\mathbf{G})$, i.e.\ the set of valid formulas in   $\Pi\mathcal {G}$, the class of all $\Pi G$-frames, coincides with the set of theorems of the logic $K45(\mathbf{G})$, or in other words, the logic $K45(\mathbf{G})$ is sound and complete with respect to $\Pi G$-frames. It is natural to ask whether the logic 
$K45(\mathbf{G})$ is decidable. %with respect to this semantics. 
Unfortunately, % the answer to this question is negative, because  
it is easy to check that under the possibilistic semantics the logic $K45(\mathbf{G})$ does not satisfy the finite model property. Indeed, consider the formula
$$\square \lnot \lnot p \rightarrow \lnot \lnot \square p  $$ 
where $p$ is a propositional variable. This formula is not valid in the model $\m = \langle \mathbb{N},\pi, e\rangle,$ where for all $n \in \mathbb{N}$, %\olim{(``crisp'' is not defined anywhere)} \ricardo{OK. We delete the term.}
\begin{equation*}
\pi (n)=1 \mbox{  and  } e(p,n)=\tfrac{1}{n+1}.
\end{equation*}%
%Indeed, assume $\varphi := p$ for some propositional variable. 
Then, for all $n \in \mathbb{N}$ we have:
$$e(\lnot \lnot p,n)=(\frac{1}{n+1} \Rightarrow 0) \Rightarrow 0=1,$$ 
 
$$e(\Box p,n )=\inf_{n \in \mathbb{N}} \{ \pi(n) \Rightarrow e(p, n)\} = \inf_{n\in \mathbb{N}}\{1{\Rightarrow} \frac{1}{n+1}\}=0,$$
$$e(\Box \neg\neg p,n )=\inf_{n \in \mathbb{N}} \{ \pi(n) \Rightarrow e(\neg\neg p, n)\} = \inf_{n\in \mathbb{N}}\{1{\Rightarrow} 1 \}=1,$$
%$e(w,\Box \varphi )=\inf_{w'\in W}\{R(w,w')\Rightarrow e(w',\varphi )\}$
and hence  $e(\square \lnot \lnot p \rightarrow \lnot \lnot \square p, n) 
= 0$.
However, $\Box \lnot \lnot p \rightarrow \lnot \lnot \Box p $ is valid in 
any $\Pi G$-model $\langle W,\pi,e\rangle $ where $W$ is finite.

%\subsection{An alternative semantics}
Nevertheless, in \cite{CMRR2013}, an alternative semantics was provided for $K(\mathbf{G})$ which admits the finite model property. In this section, we are going to adapt
that semantics in order to obtain an equivalent class of $\Pi G$-models. For that, we are going to use the same strategy used in \cite{CMRR2013}, i.e. by limiting  the truth-values of modal formulae to a finite number of possibilities. According to this idea,  we propose the following adaptation of the original semantics given in  \cite{CMRR2013}.
\begin{definition}
A  ${\Pi GF}$-model is a quadruple $\m= \langle W, \pi, T, e \rangle$, where $ \langle W, \pi, e \rangle
$ is a $\Pi G$-model and $T \in \mathcal{P}_{<\omega}(\left[ 0, 1 \right])$ is a finite set of
truth values satisfying $\{0,1\} \subseteq T \subseteq [0,1]$. 
The valuation $e$ is extended to formulas using the same clauses for non-modal connectives as for $\Pi G$-models, and using the following revised clauses for modal connectives:
\begin{eqnarray*}
	e(\bo \f, x)	&	=	&	\max \left \{r \in T: r \leq \inf_{y \in W} \{\pi(y) {\Rightarrow} \ e(\f, y)\} \right \} \\
	e(\di \f, x)	&	=	&	\min \left \{r \in T: r \geq \sup_{y \in W} \{\min(\pi(y),e(\f, y))\} \right \}.
\end{eqnarray*}
A formula $\varphi$ is said to be \emph{valid} in $\m$ if $e(\varphi,x) = 1$ for all $x \in W$. We will denote by $\Pi \mathcal{GF}$ the class of all $\Pi GF$-models.
\end{definition}

Notice that now the formula $\f = \bo \neg \neg p \to \neg \neg \bo p$ has a finite
$\Pi GF$-counter-model. Indeed, consider the $\Pi GF$-model $ \m_0 = 
\langle W,\pi,T,e \rangle$ with $W = \{a\}$, $\pi(a) = 1 $, $T
= \{0,1\}$, and such that $e(p,a) = \frac{1}{2}$. Then we have: 
\begin{itemize}
\item $e(\neg p, a) = 0$, $\pi(a) \Rightarrow e(\neg \neg p, a) = 1$, 
and so $e(\bo \neg \neg p, a) = 1$.
\item Further, $e(\bo p, a) =  0$ (since $\pi(a) {\Rightarrow} \ e(p, a) = \frac{1}{2}$,
and $0$ is the next smallest element of $ T$);
\item Hence, $e(\neg \bo p, a) = 1$ and $e(\neg \neg \bo p, a) = 0$.
\item Therefore,  $1 = e(\bo \neg \neg p, a) > e( \neg \neg \bo p, a) = 
0$ and $\bo \neg \neg p \to \neg \neg \bo p$ is not valid in $\frm{M}_0$.
\end{itemize}

Next, we are going to prove that both semantics characterize the same logic. First we need the following lemmas to prove the main result.

\begin{lemma} \label{extended}
Let $\m =  \langle W, \pi, T, e \rangle$ be an $\Pi GF$-model. Given an 
order-embedding $h \colon [0,1] \to [0,1]$ satisfying $h(0) =0$ , $h(1) 
= 1$, and for any $t \in T :  h(t) = t$, consider $\wh{\m} =   \langle \wh{W}, \wh{\pi}, \wh{T}, \wh{e} \rangle $,  with $\wh{W} = W_{\m}$, $\wh{\pi}( x) = h(\pi (x))$,
$\wh{T}(x) = T(x)$, and  $\wh{e}(p,x) = h(e(p,x))$ for all  $x \in W$
and $p \in \rm{Var}$. Then, for all $\f \in \mfml$ and $x \in W$: $$\wh{e}(\f,x) = h(e(\f,x))$$
\end{lemma}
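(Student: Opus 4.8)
The plan is to prove the identity $\wh{e}(\f,x) = h(e(\f,x))$ by induction on the complexity of $\f$, uniformly in $x \in W$. Before starting the induction I would isolate one auxiliary fact and use it everywhere: an order-embedding $h$ with $h(0)=0$ and $h(1)=1$ commutes with all the G\"odel operations. Being an order-embedding means precisely that $a \le b$ iff $h(a) \le h(b)$, so $h$ is strictly increasing and, crucially, \emph{reflects} the order. From this one immediately gets $h(\min(a,b)) = \min(h(a),h(b))$ and $h(\max(a,b)) = \max(h(a),h(b))$, and for the G\"odel implication: if $a \le b$ then $h(a) \le h(b)$, so $h(a \Rightarrow b) = h(1) = 1 = h(a) \Rightarrow h(b)$; if $a > b$ then $h(a) > h(b)$, so $h(a \Rightarrow b) = h(b) = h(a) \Rightarrow h(b)$.

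The base and propositional cases are then routine. For $\f = \gbot$ both sides equal $0 = h(0)$, and for $\f = p$ the identity is the definition of $\wh{e}$. For $\f = \p \star \x$ with $\star \in \{\wedge,\vee,\to\}$ I would apply the induction hypothesis $\wh{e}(\p,x) = h(e(\p,x))$ and $\wh{e}(\x,x) = h(e(\x,x))$, followed by the commutation fact for the matching operation, to obtain $\wh{e}(\p \star \x,x) = h(e(\p \star \x,x))$.

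The crux is the modal step. Take $\bo\f$ and set $a_y = \pi(y) \Rightarrow e(\f,y)$ and $I = \inf_{y\in W} a_y$, so that $e(\bo\f,x) = \max\{r \in T : r \le I\} =: m$. Using the induction hypothesis and the commutation of $h$ with $\Rightarrow$, the inner term computed in $\wh{\m}$ is $\wh{\pi}(y) \Rightarrow \wh{e}(\f,y) = h(a_y)$, whence $\wh{e}(\bo\f,x) = \max\{r \in T : r \le \wh{I}\}$ with $\wh{I} = \inf_{y} h(a_y)$. Since $m \in T$ and $h$ fixes $T$ pointwise, $h(e(\bo\f,x)) = m$, so it suffices to show $\max\{r \in T : r \le \wh{I}\} = m$. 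For $m \le \wh{I}$: from $m \le I$ and monotonicity, $m = h(m) \le h(a_y)$ for every $y$, hence $m \le \wh{I}$. For maximality, suppose some $m' \in T$ satisfies $m' > m$ and $m' \le \wh{I}$; since $m$ is the largest element of $T$ below $I$, $m' > m$ forces $m' > I$, so there is a $y$ with $a_y < m'$; but $m' \le \wh{I} \le h(a_y)$ together with $h(m') = m'$ gives $h(m') \le h(a_y)$, and because $h$ reflects the order this yields $m' \le a_y$, a contradiction. The diamond case is dual: with $b_y = \min(\pi(y),e(\f,y))$, $S = \sup_y b_y$, and $M = \min\{r\in T : r \ge S\}$, the symmetric argument shows $\min\{r \in T : r \ge \wh{S}\} = M$, where $\wh{S} = \sup_y h(b_y)$.

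The main obstacle lies exactly here: since $h$ need not be continuous, $\inf_y h(a_y)$ and $h(\inf_y a_y)$ may genuinely differ, so one cannot simply push $h$ through the infimum (or supremum). What rescues the argument is that the modal value already lands in the finite set $T$, which $h$ fixes, and that the order-\emph{reflecting} property of an order-embedding (not mere monotonicity) lets me transport the decisive strict inequality back across the inf/sup even when $h$ is discontinuous. I would emphasize in the write-up that both the full order-embedding hypothesis and the condition $h|_T = \mathrm{id}$ are used essentially in this step.
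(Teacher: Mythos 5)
Your proof is correct, but it takes a different route from the paper: the paper disposes of this lemma in one line, citing it as a special case of part~(c) of Lemma~1 in the WOLLIC 2013 paper of Caicedo, Metcalfe, Rodriguez and Rogger (\cite{CMRR2013}), where the analogous statement is proved for general $GF$-models with accessibility relations; here it is merely specialized to the possibilistic setting $R(x,y)=\pi(y)$. What you have done instead is reconstruct the underlying argument from scratch: the commutation of an order-embedding fixing $\{0,1\}$ with $\min$, $\max$ and G\"odel $\Rightarrow$, the routine induction for the propositional connectives, and---the only delicate point---the modal step, where $h$ need not commute with the infimum or supremum because it need not be continuous. Your resolution is exactly the right one: the modal value already lies in the finite set $T$, which $h$ fixes pointwise, and the order-\emph{reflecting} half of the embedding property lets you transport the decisive strict inequality back across the $\inf$/$\sup$ (your maximality/minimality contradictions for $m'$ and $M'$ are both sound, as is the dual diamond case you sketched). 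The trade-off is the usual one: the paper's citation buys brevity and rests on a published, more general result, while your self-contained induction makes visible precisely where the two hypotheses --- full order-embedding rather than mere monotonicity, and $h|_T=\mathrm{id}$ --- are indispensable, which the citation hides.
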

\begin{proof2} It is a special case of part (c) of Lemma 1 in~\cite{CMRR2013}.
\end{proof2}

Now, we provide the key construction of a $\Pi G$-model taking the same truth values for formulae as a given $\Pi GF$-model.

\begin{lemma}\label{rtlD45}
For any  $\Pi GF$-model $\m = \la W,\pi, T, e \ra$, there is a  $\Pi G$-model $\wh{\m}= \la \wh{W},\wh{\pi},\wh{e} \ra$ with $W \subseteq \wh{W}$,
such that $\wh{e}(\f, x) = e(\f,x)$ for all $\f \in \mfml$ and $x \in W_\m$.
\end{lemma}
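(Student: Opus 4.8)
The plan is to obtain $\wh\m$ from $\m$ by adjoining countably many rescaled copies of $W$, chosen so that the genuine infima and suprema governing the $\Pi G$-semantics land exactly on the $T$-rounded values prescribed by the $\Pi GF$-semantics. The key observation is that in $\m$ every modal formula already takes a value in the finite set $T$, and these values are world-independent; the only discrepancy between the two semantics is that the true quantities $\inf_{y}\{\pi(y)\Rightarrow e(\f,y)\}$ and $\sup_y\{\min(\pi(y),e(\f,y))\}$ may fall strictly inside a gap of $T$ and then get rounded down, resp.\ up. Adjoining worlds that realise values approaching the relevant gap endpoints will force the genuine inf and sup to coincide with these rounded values.

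Concretely, write $T = \{t_0 = 0 < t_1 < \dots < t_n = 1\}$ and fix two families of continuous order-isomorphisms $h_k^\downarrow, h_k^\uparrow \colon [0,1]\to[0,1]$ ($k\in\mathbb N$), each fixing every point of $T$ and mapping every gap $(t_i,t_{i+1})$ onto itself, such that on each gap $h_k^\downarrow$ pushes points towards the lower endpoint and $h_k^\uparrow$ towards the upper one, with $h_k^\downarrow(r)\to t_i$ and $h_k^\uparrow(r)\to t_{i+1}$ as $k\to\infty$ for every $r\in(t_i,t_{i+1})$ (e.g.\ rescale each gap by $\theta\mapsto\theta^k$, resp.\ $\theta\mapsto\theta^{1/k}$). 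I set $\wh W = W\sqcup\bigsqcup_k W_k^\downarrow\sqcup\bigsqcup_k W_k^\uparrow$, where each $W_k^\downarrow,W_k^\uparrow$ is a disjoint copy of $W$; on the original worlds $\wh\pi$ and $\wh e$ coincide with $\pi,e$, while on the copy indexed by a given $h$ (one of the $h_k^\downarrow,h_k^\uparrow$) I put $\wh\pi(y_h)=h(\pi(y))$ and $\wh e(p,y_h)=h(e(p,y))$ for $p\in\mathrm{Var}$.

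The heart of the argument is then an induction on modal depth establishing simultaneously that (i) for every $\f$ and every $x\in W$, $\wh e(\f,x)=e(\f,x)$, and (ii) on the copy indexed by $h$, $\wh e(\f,y_h)=h(e(\f,y))$. Because every modal subformula has, by the induction hypothesis, a world-independent value lying in $T$ — hence fixed by each $h$ — claim (ii) for propositional combinations follows exactly as in Lemma~\ref{extended}, order-isomorphisms commuting with the G\"odel connectives; claim (i) is then immediate since $h$ is the identity on the already-correct modal values. It remains to evaluate a modal formula, say $\bo\f$ with $\f$ of smaller depth. Since $h(a\Rightarrow b)=h(a)\Rightarrow h(b)$ and $h$ commutes with infima, the contribution of the $h$-copy to $\inf_{\wh y}\{\wh\pi(\wh y)\Rightarrow\wh e(\f,\wh y)\}$ equals $h(I)$, where $I=\inf_{y\in W}\{\pi(y)\Rightarrow e(\f,y)\}$. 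The original worlds contribute $I$, every copy contributes a value $\ge t_i := \max\{t\in T : t\le I\}$ (all the $h$ fix $t_i$ and are increasing), and along the family $h_k^\downarrow$ the contributions $h_k^\downarrow(I)$ decrease to $t_i$; hence the overall infimum is exactly $t_i=e(\bo\f,x)$. The dual computation with the $h_k^\uparrow$ copies gives $\sup_{\wh y}\{\min(\wh\pi(\wh y),\wh e(\f,\wh y))\}=\min\{t\in T: t\ge S\}=e(\di\f,x)$, where $S=\sup_{y\in W}\{\min(\pi(y),e(\f,y))\}$; crucially, the push-up copies never drop the infimum below $t_i$ and the push-down copies never raise the supremum above the ceiling of $S$, so neither family spoils the other. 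This closes the induction and yields $\wh e(\f,x)=e(\f,x)$ for all $\f$ and all $x\in W$.

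The main obstacle is precisely this no-overshoot bookkeeping: one must move values downward (to pin the infima) and upward (to pin the suprema) within the very same gaps, which is why two separate families of copies are needed, and one must verify that each family leaves the other's aggregate untouched. The remaining verifications — that $h_k^\downarrow(I)\to t_i$ and $h_k^\uparrow(S)$ tends to the ceiling of $S$, and that the copies genuinely realise $h$-scaled values — are routine, the latter being an instance of the commutation already recorded in Lemma~\ref{extended}.
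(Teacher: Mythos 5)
Your proposal is correct and takes essentially the same route as the paper's own proof: adjoin countably many copies of $W$ rescaled by order-embeddings that fix $T$ and squeeze each gap toward its lower (respectively upper) endpoint, then run a simultaneous induction showing that original worlds keep their values while the copy indexed by $h$ realises the $h$-images, so the infima defining $\Box$ and the suprema defining $\Diamond$ are pinned exactly to the $T$-rounded values. The only differences are presentational: the paper interleaves your two families $h_k^\downarrow,h_k^\uparrow$ as the even/odd members of a single sequence $\{h_k\}$ and settles the modal step by a witness case-split (value attained at $\alpha_i$ versus all values strictly above it) instead of your continuity/commutation argument.
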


\begin{proof2}
We proceed similarly to the proof of Lemma 4 in \cite{CMRR2013}. Given a $\Pi GF$-model $\m = \la W,\pi, T,e \ra$, we  construct its associated $\Pi G$-model $\wh{\m}$
directly by  taking infinitely many copies of $\m$. Consider $T = \{\alpha_1, \ldots, \alpha_n\}$ with $0 = \alpha_1 < \ldots < \alpha_n = 1$ and, using Lemma~\ref{extended}, define a family of order-embeddings $\{h_k\}_{k \in \mathbb{Z}^+}$ from $[0,1]$ into $[0,1$]
satisfying $h_k (0) = 0$ and $h_k(1) = 1$, such that
\begin{center}
\begin{tabular}{rcll}
$h_k(\alpha_i)$					&	$=$	&	$\alpha_i$ &	for all $i \leq n-1$ and $k \in \mathbb{Z}^+$\\
$h_k[(\alpha_i, \alpha_{i+1})]$	&	$=$	&	$(\alpha_i, \min(\alpha_i + \frac{1}{k}, \alpha_{i+1}))$	 &	for all $i \leq
n-1$ and even $k \in \mathbb{Z}^+$\\
$h_k[(\alpha_i, \alpha_{i+1})]$	&	$=$	&	$(\max(\alpha_{i}, \alpha_{i+1}- \frac{1}{k}), \alpha_{i+1})$	&	for all $i
\leq n-1$ and odd $k \in \mathbb{Z}^+$.\\
\end{tabular}
\end{center}

For all  $k \in \mathbb{Z}^+$, we define a  $\Pi G$-model
 $\wh{\m}_k = \langle \wh{W}_k, \wh{\pi}_k, \wh{e}_k \rangle$ such that 
each $\wh{W}_k$ is a copy of $W$ with distinct
worlds,  $ \wh{\pi}_k = h_k(\pi)$ and $\wh{e}_k(\f,x^k) = h_k(e(\f,x))$ for each copy $x^k$ of $x \in W$ and $\f \in \mfml$. We also  let
$\wh{W}_0 = W$,  $ \wh{\pi}_0 = \pi$ and $\wh{e}_0 = e$. Then $\wh{\m} = \langle \wh{W},\wh{\pi},\wh{e} \rangle$ where
\[
\wh{W} = \bigcup_{k \in \mathbb{N}} \wh{W}_k \qquad {\rm and}  \ \text{ 
for } \wh{x} \in \wh{W}_k : \qquad \wh{\pi}(\wh{x}) = \wh{\pi}_k(\wh{x}) \qquad ;\qquad
\wh{e}(p,\wh{x}) = \wh{e}_k(p,\wh{x}) .
\]
Now, It then suffices to prove that $\wh{e}(\f,x) = e(\f,x)$ for all $\f \in \mfml$ and  $x \in W$, proceeding by an induction on $\ell(\f)$.  The base case $\ell(\f) = 1$ follows directly from the definition of $\wh{e}$. For the inductive step, the cases for the
non-modal connectives follow easily using the induction hypothesis. Let us just consider the case $\f = \bo \p$, the case $\f = \di \p$ being very similar. There are two possibilities.  Suppose first that
\[
e(\bo \p,x) = \max\{r \in T : r \leq \inf \{\pi(y) \Rightarrow e(\p, y): y \in W\}\} = 1.
\]
That means for all $y \in W \  : \  \pi(y) \leq e(\p, y)$ and hence,  for 
all  $k \in \mathbb{Z}^+$ and  $\wh{y} \in \wh{W}$:  $h_k(\pi(y)) = \wh{\pi}_k ({y}^k) = \wh{\pi} (\wh{y}) \leq h_k (e(\p, y)) = \wh{e}_k(\p,y^k)= \wh{e}(\p,\wh{y})$.
It follows that
\[
\wh{e}(\bo \p, \wh{x}) = \inf\{\wh{\pi}(\wh{y}) \Rightarrow \wh{e}(\p,\wh{y}): \wh{y} \in \wh{W}\} = 1 = e(\bo \p,x).
\]
Now suppose that $e(\bo \p,x) = \alpha_i < 1$ for some $i \leq m-1$. Then  $\pi(z) \Rightarrow e(\p,z) \geq \alpha_i$ for all $z \in W$, and thus, $(\star)$, $\wh{\pi} (\wh{z}) \Rightarrow \wh{e}(\p,\wh{z}) \geq \alpha_{i}$ for all $\wh{z}
\in \wh{W}$, by construction using the order-embeddings $\{h_k\}_{k \in \mathbb{Z}^+}$.

There are two subcases. First, suppose that there is at least one $y \in W$ such that $\pi(y) \Rightarrow e(\p,y) =
\alpha_i$; call it $y_0$. This means that $\pi(y_0) > e(\p,y_0) = \alpha_i$ and for all $k \in \mathbb{Z}^+$, $\wh{e}(
\p, \wh{y}_0) = \wh{e}_k(\p,{y}_0^k) =  h_k(e(\p,y_0))
= h_k(\alpha_i)= \alpha_i$. Since $\pi(y_0) > \alpha_i$, also for all 
$k \in \mathbb{Z}^+$, $\wh{\pi}(\wh{y}_0) = \wh{\pi}_k({y}_0^k) =
h_k(\pi(y_0)) > \alpha_i = \wh{e}(\p, \wh{y}_0)$, and hence, using $(\star)$,
\[
\wh{e}(\bo \p,\wh{x})=\inf\{\wh{\pi} (\wh{z}) \Rightarrow \wh{e}(\p,\wh{z}): \wh{z} \in \wh{W}\} = \alpha_i = e(\bo \p, x).
\]
Now suppose that $\pi(y) \Rightarrow e(\p,y) > \alpha_i$ for all $y \in W$. Since $e(\bo \p,x) = \max\{r \in T
: r \leq \inf \{\pi(y) \Rightarrow e(\p, y): y \in W\}\} = \alpha_i$, there is at least one $y \in W$ such that $\pi(y) \Rightarrow e(\p,y) \in (\alpha_i, \alpha_{i+1})$; call it $y_0$. Then, by construction, for any $\epsilon > 0$ there is a $k \in \mathbb{Z}^+$ such that $\wh{\pi} ({y}^k_0) \Rightarrow \wh{e}(\p, {y}^k_0) \in (\alpha_i, \alpha_i + \epsilon)$. 
Using $(\star)$, this ensures that\\

\qquad$\wh{e}(\bo \p,\wh{x})= \inf\{\wh{\pi}( z) \Rightarrow \wh{e}(\p,z): z \in \wh{W}\} = \alpha_i = e(\bo\p,x)$.

\end{proof2}
As an immediate consequence of Lemma \ref{rtlD45}, we have the next corollary. 
% $$Val( \Pi{\mathcal{ G}}) \supseteq  Val( \Pi{\mathcal{ GF}})$$

\begin{corollary} \label{Decib1} $Val( \Pi{\mathcal{ G}}) \subseteq  Val( 
\Pi{\mathcal{ GF}})$. 
\end{corollary}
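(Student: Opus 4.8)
The plan is to prove the contrapositive of the stated inclusion, namely that every formula which fails to be $\Pi\mathcal{GF}$-valid also fails to be $\Pi\mathcal{G}$-valid. Since all the substantive model-theoretic work has already been carried out in Lemma~\ref{rtlD45}, the argument reduces to a one-line application of that lemma, and I expect no genuine obstacle beyond correctly matching the two notions of validity across the embedding $W \subseteq \wh{W}$.

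Concretely, suppose $\varphi \notin Val(\Pi\mathcal{G F})$. By the definition of validity in a $\Pi GF$-model, there is some $\Pi GF$-model $\m = \la W,\pi, T, e \ra$ and a world $x_0 \in W$ with $e(\varphi, x_0) \neq 1$. I would then invoke Lemma~\ref{rtlD45} applied to this $\m$, obtaining a $\Pi G$-model $\wh{\m} = \la \wh{W},\wh{\pi},\wh{e} \ra$ with $W \subseteq \wh{W}$ and satisfying $\wh{e}(\psi, x) = e(\psi,x)$ for every $\psi \in \mfml$ and every $x \in W$. Instantiating this at $\psi = \varphi$ and $x = x_0$ gives $\wh{e}(\varphi, x_0) = e(\varphi, x_0) \neq 1$, so $\varphi$ is not valid in the $\Pi G$-model $\wh{\m}$, whence $\varphi \notin Val(\Pi\mathcal{G})$.

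Taking the contrapositive then yields $Val(\Pi\mathcal{G}) \subseteq Val(\Pi\mathcal{GF})$, as claimed. The only point that deserves a moment of care is that $\Pi\mathcal{GF}$-validity quantifies over the worlds $W$ of the original model $\m$, and these are exactly the worlds preserved by the inclusion $W \subseteq \wh{W}$ furnished by Lemma~\ref{rtlD45}; consequently the counterexample world $x_0$ survives into $\wh{\m}$ carrying the same truth value for $\varphi$, which is precisely what the reduction requires. This is what makes the corollary \emph{immediate} from Lemma~\ref{rtlD45} rather than requiring any further construction.
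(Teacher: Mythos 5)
Your proof is correct and matches the paper's approach: the paper states the corollary as an immediate consequence of Lemma~\ref{rtlD45}, which is precisely the reduction you carry out, merely phrased in contrapositive form (transferring a $\Pi GF$-countermodel to a $\Pi G$-countermodel with the same truth value at the same world). Your closing remark about the counterexample world surviving the inclusion $W \subseteq \wh{W}$ is exactly the point that makes the lemma applicable, so nothing is missing.
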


The next lemma paves the way to prove in Theorem \ref{Decib2} that the logic $K45(\mathbf{G})$ has the finite model property and offers a bound
on the complexity. We call a set $\Sigma \subseteq \mfml$ a \emph{fragment} if it is closed under subformulas.

\begin{lemma} \label{ltrD45}
{Let $\Si \subseteq \mfml$ be a finite fragment.} Then, for any  $\Pi G$-model $\m$, there is a finite  $\Pi GF$-model $\wh{\m}$ with $\wh{W} 
\subseteq W$, such that
$\wh{e}(\f, x) = e(\f,x)$ for all $\f \in \Si$ and $x \in \wh{W}$. Moreover, $|\wh{W}|
+|\wh{T}| \leq 2|\Si|$.
\end{lemma}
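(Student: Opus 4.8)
\textbf{Proof strategy for Lemma~\ref{ltrD45}.}
The plan is to construct the finite $\Pi GF$-model $\wh{\m}$ from a given $\Pi G$-model $\m = \langle W, \pi, e\rangle$ by carefully selecting a finite subset of worlds together with a finite set of truth values, so that the truth values of the formulas in the finite fragment $\Si$ are faithfully preserved. The guiding idea, following Lemma~4 of \cite{CMRR2013}, is that although the infima and suprema defining $e(\bo\f, x)$ and $e(\di\f, x)$ range over all of $W$, for each modal subformula in $\Si$ these extrema are either attained or approximated by finitely much data, so a finite amount of information suffices.

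First I would collect the relevant truth values into a candidate set $\wh{T}$. For each modal formula $\bo\p \in \Si$, I record the value $e(\bo\p, x) = \inf_{y \in W}\{\pi(y)\Rightarrow e(\p, y)\}$, and dually for each $\di\p \in \Si$ the value $\sup_{y\in W}\{\min(\pi(y), e(\p,y))\}$; together with $0$ and $1$ these generate a finite set $\wh{T}$ with at most $|\Si|$ proper elements, so $|\wh{T}| \leq |\Si|+1$ roughly. Next, for each such modal formula I would select a single witnessing world from $W$: for a $\di\p$ whose supremum is attained, a world realising it, and when it is only approached, a world bringing $\min(\pi(y), e(\p,y))$ into the correct cell determined by $\wh{T}$; symmetrically for $\bo\p$ using $\pi(y)\Rightarrow e(\p,y)$. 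Letting $\wh{W}$ be the (finite) set of chosen witnesses, the cardinality bound $|\wh{W}| + |\wh{T}| \leq 2|\Si|$ should follow by charging one witness world and one truth value to each modal subformula in $\Si$, with the two constants $0,1$ absorbed into the slack.

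The core of the argument is then the claim that $\wh{e}(\f,x) = e(\f,x)$ for all $\f\in\Si$ and $x\in\wh{W}$, proved by induction on the complexity $\ell(\f)$. The propositional variable and constant cases are immediate, and the non-modal connectives are routine since $\wh{W}\subseteq W$ and the truth values of proper subformulas agree by the induction hypothesis. For the modal step $\f = \bo\p$, the new clause reads $\wh{e}(\bo\p, x) = \max\{r\in\wh{T}: r \leq \inf_{y\in\wh{W}}\{\wh{\pi}(y)\Rightarrow\wh{e}(\p,y)\}\}$; using the induction hypothesis this inner infimum over $\wh{W}$ must be shown to land in the same cell of $\wh{T}$ as the infimum over all of $W$, which is exactly what the witness-world selection guarantees, so that rounding down into $\wh{T}$ returns the original value $e(\bo\p,x)$. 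The case $\f=\di\p$ is symmetric with rounding up.

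The main obstacle I expect is the verification that the witness selection genuinely preserves the modal values once they are rounded into $\wh{T}$, particularly in the subcase where the defining infimum (or supremum) is not attained in $W$ but only approached: here one must ensure the chosen witness pushes the restricted extremum strictly into the half-open interval $(\alpha_i, \alpha_{i+1})$ between consecutive elements of $\wh{T}$, so that the $\max$-rounding of the $\Pi GF$-clause recovers exactly $\alpha_i$ rather than over- or under-shooting. Managing this cell-membership argument uniformly for all modal subformulas, while keeping the witness count within the stated bound $2|\Si|$, is the delicate part; the rest is bookkeeping closely paralleling \cite{CMRR2013}.
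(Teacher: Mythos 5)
Your proposal follows essentially the same route as the paper's proof: you take $\wh{T}$ to be the values of the modal subformulas of $\Si$ together with $\{0,1\}$, pick one witness world per $\bo\p$ and $\di\p$ forcing the restricted infimum (resp.\ supremum) into the half-open cell $[\alpha_i,\alpha_{i+1})$ (resp.\ $(\alpha_{i-1},\alpha_i]$) determined by $\wh{T}$, restrict $\pi$ and $e$ to these witnesses, and conclude by induction on $\ell(\f)$ that the max/min rounding recovers the original values, with the same one-world-plus-one-value-per-modal-subformula accounting for the bound $|\wh{W}|+|\wh{T}|\leq 2|\Si|$. The "delicate" cell-membership step you flag is handled in the paper exactly as you anticipate, and works uniformly whether or not the extremum is attained, since in both cases the witness value lies below $\alpha_{i+1}$ (resp.\ above $\alpha_{i-1}$) while $\wh{W}\subseteq W$ gives the bound on the other side.
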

\begin{proof2}
{Let $\Si\subseteq \mfml$ be a finite fragment}, $\m = \langle W, \pi, e \rangle$ a  $\Pi G$-model.  First, define  $\Si_\bo$ as the set of 
all box-formulas in $\Si$, $\Si_\di$ as the set of all diamond-formulas in $\Si$, and $\Si_{
\rm{Var}}$  as the set of all variables in $\Si$. Let us also define $e_{x}[\De] = \{e(\f,x): \f \in \De\}$ for any $x
\in W$ and $\De \subseteq \mfml$.  In addition,  let $e_{x}[\Si_\bo \cup \Si_\di] \cup  \{0,1 \}=\{\alpha_1, \ldots, \alpha_n\}$ with $0 = \alpha_1 < \ldots < \alpha_n = 1$.

Next, we choose a finite number of $y \in W$. For each $\bo \p \in \Si_\bo$ such that $e(\bo \p,x) = \alpha_i < 1$, choose a $y = y_{\bo \p}
 \in W$ such that $e (\p,y_{\bo \p}) < \alpha_{i+1}$, and for each $\di \p \in \Si _\di$, such that $e(\di \p,x) =
\alpha_i > 0$, choose a $y = y_{\di \p} \in W$ such that $e( \p,y_{\di \p}) > \alpha_{i-1}$. Then let $\wh{W} = \{x
\} \cup \{y_{\bo \p} \in W: \bo \p \in \Si_\bo\} \cup \{y_{\di \p} \in W: 
\di \p \in \Si_\di\}$. Clearly $\wh{W}
\subseteq W$ is finite. We define $\wh{\m} = \langle \wh{W}, \wh{\pi}, \wh{T}, \wh{e} \rangle$ where $\wh{T} = e_{x}[\Si_\bo
\cup \Si_\di] \cup \{0,1\}$, and both $\wh{\pi}$ and $\wh{e}$ are equal to $\pi$ and $e$  restricted to $\wh{W}$, respectively. It then follows by 
induction on $\ell(\f)$ that $\wh{e}(\f, x) = e(\f,x)$ for all $x \in \wh{W}$ and $\f \in\Si$.  The base case follows from the definition of $\wh{e}$.  For the inductive step, let $\f \in \Si$ be of the form $\f = \bo \p$ (the non-modal cases follow directly, using the induction hypothesis). We need to consider the same two cases it was considered in previous 
Lemma \ref{rtlD45}. First, note that always it is true that:
$$ \inf\{ \pi(y) \Rightarrow e(\p,y): {y \in W} \} \leq  \inf\{ \wh{\pi}(\wh{y}) \Rightarrow \wh{e}(\p,\wh{y}): \wh{y} \in \wh{W}\}$$
because of  $\wh{W} \subseteq W$. Now suppose that $1 = e(\bo \p, x)$. Then, because $1 \in \wh{T}$,
\begin{align*}
	1 = e(\bo \p, x) &= \inf\{ \pi(y) \Rightarrow e(\p,y): {y \in W} \} \\
		&\leq \inf\{ \wh{\pi}(\wh{y}) \Rightarrow \wh{e}(\p,\wh{y}): \wh{y} \in \wh{W}\} \\
		&\leq \max\{ r \in \wh{T} : r \leq  \inf\{ \wh{\pi}(\wh{y}) \Rightarrow \wh{e}(\p,\wh{y}): \wh{y} \in \wh{W}\}\} \\
		&= \wh{e}(\bo \p, x).
\end{align*}

For the second case, $e(\bo \p, x) = \inf\{ \pi(y) \Rightarrow e(\p,y) : y \in W\}  = \alpha_i < 1$ for some $i \in \{1, \ldots, n-1\}$. According to our choise, there exists a  $ y_{\bo \p} \in \wh{W}$ such that $ \alpha_i \leq \wh{\pi}(y_{\bo \p}) \Rightarrow \wh{e}(\p, y_{\bo \p}) <  \alpha_{i+1}$. Hence, $$ \alpha_i \leq  \inf\{ \wh{\pi}(\wh{y}) \Rightarrow \wh{e}(\p,\wh{y}): \wh{y} \in \wh{W}\}\} \leq \wh{\pi}(y_{\bo \p}) \Rightarrow \wh{e}(\p, y_{\bo \p}) <  \alpha_{i+1}$$
Thus
$$ \wh{e}(\bo \p, x)= \max\{ r \in \wh{T} : r \leq  \inf\{ \wh{\pi}(\wh{y}) \Rightarrow \wh{e}(\p,\wh{y}): \wh{y} \in \wh{W}\}\} =  \alpha_i$$

The diamond-case case follows similarly to the box-case and is therefore omitted.

Finally, we note that by the construction of $\wh{\m}$, $|\wh{W}| \leq |\Si_\bo \cup \Si_\di| + 1 \leq |\Si|$ and $|\wh{T}| \leq |\Si_\bo \cup \Si 
_\di| + 2 \leq |\Si|$, and therefore $|\wh{W}| + |\wh{T}| \leq 2 |\Si|$.

\end{proof2}

As a direct consequence of the above lemma  we have the converse inclusion of Corollary \ref{Decib1}.

\begin{corollary}  $Val( \Pi{\mathcal{ GF}}) \subseteq Val( \Pi{\mathcal{ 
G}})   $. 
\end{corollary}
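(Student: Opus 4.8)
This corollary is the converse inclusion to Corollary \ref{Decib1}, and it is meant to follow directly from Lemma \ref{ltrD45}; the plan is to argue by contraposition. Since a formula lies in $Val(\Pi\mathcal{G})$ (resp.\ $Val(\Pi\mathcal{GF})$) exactly when it is refuted in no $\Pi G$-model (resp.\ no $\Pi GF$-model), proving $Val(\Pi\mathcal{GF}) \subseteq Val(\Pi\mathcal{G})$ amounts to showing that every formula refuted in some $\Pi G$-model is also refuted in some $\Pi GF$-model. So I would start from a $\Pi G$-counter-model and manufacture a $\Pi GF$-counter-model out of it.

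Concretely, assume $\varphi \notin Val(\Pi\mathcal{G})$. Then there is a $\Pi G$-model $\m = \langle W, \pi, e\rangle$ and a world $x_0 \in W$ with $e(\varphi, x_0) < 1$. I would take the fragment $\Si := Sub(\varphi)$, which is finite and, being the set of subformulas of $\varphi$, closed under subformulas; moreover $\varphi \in \Si$. Applying Lemma \ref{ltrD45} to $\m$ and $\Si$, with the distinguished world of the construction chosen to be $x_0$, yields a finite $\Pi GF$-model $\wh{\m} = \langle \wh{W}, \wh{\pi}, \wh{T}, \wh{e}\rangle$ with $\wh{W} \subseteq W$, $x_0 \in \wh{W}$ (the base point of the construction), and $\wh{e}(\psi, y) = e(\psi, y)$ for all $\psi \in \Si$ and $y \in \wh{W}$.

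It then remains only to read off the conclusion: since $\varphi \in \Si$ and $x_0 \in \wh{W}$, we get $\wh{e}(\varphi, x_0) = e(\varphi, x_0) < 1$, so $\varphi$ is not valid in the $\Pi GF$-model $\wh{\m}$, whence $\varphi \notin Val(\Pi\mathcal{GF})$. Combined with Corollary \ref{Decib1}, this establishes $Val(\Pi\mathcal{G}) = Val(\Pi\mathcal{GF})$.

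The whole argument is essentially a one-line application of the preceding lemma, so there is no substantial obstacle; the only points demanding attention are bookkeeping ones. I must make sure that the chosen $\Si$ genuinely is a fragment (closed under subformulas) and contains $\varphi$, and that the world $x_0$ at which $\varphi$ fails is retained in $\wh{W}$ --- which is guaranteed because $x_0$ is precisely the world around which the finite model in Lemma \ref{ltrD45} is built. All the real work --- truncating the infinitely many possible modal truth values to the finite set $\wh{T}$ while preserving the values of the formulas in $\Si$ --- has already been carried out inside that lemma.
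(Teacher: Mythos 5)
Your proof is correct and follows exactly the route the paper intends: the paper states this corollary as a direct consequence of Lemma \ref{ltrD45}, and your contrapositive argument (turn a $\Pi G$-counter-model at a world $x_0$ into a finite $\Pi GF$-counter-model via the lemma applied to the fragment $Sub(\varphi)$, with $x_0$ as the distinguished world of the construction) is precisely the intended unfolding of that one-line justification. Your attention to the bookkeeping points --- that $Sub(\varphi)$ is a finite fragment containing $\varphi$ and that $x_0$ survives into $\wh{W}$ --- is exactly what makes the ``direct consequence'' claim rigorous.
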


Finally, we can state the main result of this section. 

\begin{theorem}[main] \label{Decib2} For each $\f \in \mfml$:
$\mdl{\Pi \mathcal{G}} \f$ iff $\f$ is valid in all {(finite)} $\Pi GF$-models $\m = \langle W,\pi,T,e \rangle$ satisfying
					$|W|+|T| \leq 2(\ell(\f)+2)$.
\end{theorem}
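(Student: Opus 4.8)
The plan is to obtain the theorem directly from Corollary~\ref{Decib1} together with the finite-model construction of Lemma~\ref{ltrD45}; no genuinely new argument is needed, only careful bookkeeping of the size bound.

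The left-to-right direction is immediate. If $\models_{\Pi\mathcal{G}}\f$, then by Corollary~\ref{Decib1} we have $\f \in Val(\Pi\mathcal{GF})$, so $\f$ is valid in every $\Pi GF$-model and, a fortiori, in every finite one satisfying $|W|+|T|\leq 2(\ell(\f)+2)$.

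For the converse I would argue contrapositively. Suppose $\not\models_{\Pi\mathcal{G}}\f$; then there is a $\Pi G$-model $\m=\langle W,\pi,e\rangle$ and a world $x_0\in W$ with $e(\f,x_0)<1$. Put $\Si := Sub(\f)$, a finite fragment containing $\f$ (and $\gbot$ by convention), and apply Lemma~\ref{ltrD45} to $\m$, taking $x_0$ as the distinguished base world around which the construction of that lemma is carried out, so that $x_0\in\wh{W}$. This yields a finite $\Pi GF$-model $\wh{\m}=\langle \wh{W},\wh{\pi},\wh{T},\wh{e}\rangle$ with $\wh{W}\subseteq W$ and $\wh{e}(\psi,x)=e(\psi,x)$ for all $\psi\in\Si$ and $x\in\wh{W}$. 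Since $\f\in\Si$ and $x_0\in\wh{W}$, we get $\wh{e}(\f,x_0)=e(\f,x_0)<1$, so $\f$ is refuted in $\wh{\m}$.

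It remains to check the size constraint. Lemma~\ref{ltrD45} gives $|\wh{W}|+|\wh{T}|\leq 2|\Si|=2|Sub(\f)|$, so it suffices to note the elementary bound $|Sub(\f)|\leq \ell(\f)+2$: the distinct subformulas of $\f$ number at most $\ell(\f)$, and adjoining $\gbot$ (together with $\gtop$, if one includes it in the fragment) accounts for the remaining slack. Hence $|\wh{W}|+|\wh{T}|\leq 2(\ell(\f)+2)$ and $\wh{\m}$ is the required bounded finite countermodel. The whole proof is therefore a repackaging of earlier results; the only delicate points, which I would make explicit, are ensuring that the world $x_0$ at which $\f$ fails is retained in $\wh{W}$ (obtained by using it as the base world in the construction of Lemma~\ref{ltrD45}) and confirming that $Sub(\f)$ fits within the stated $\ell(\f)+2$ bound.
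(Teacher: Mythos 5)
Your proof is correct and follows essentially the same route the paper intends: Theorem~\ref{Decib2} is stated there as a direct consequence of Corollary~\ref{Decib1} (giving the left-to-right direction) and Lemma~\ref{ltrD45} applied to the fragment $\Si = Sub(\f)$ (giving the converse by contraposition). Your two explicit clarifications---that the refuting world $x_0$ is retained in $\wh{W}$ by taking it as the base world of the construction in Lemma~\ref{ltrD45}, and that $|Sub(\f)| \leq \ell(\f)+2$ yields the stated cardinality bound---are exactly the bookkeeping the paper leaves implicit.
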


%\olim{There's some inconsistency in notation: is it $\mfml$ or $\mathcal{L}_{\Box \Diamond }(Var)$?} \ricardo{OK. We can become consistent ;-)}
{ As a direct consequence we get the decidability of $K45({\bf G})$. 

\begin{corollary} The logic $K45({\bf G})$ is decidable. 
\end{corollary}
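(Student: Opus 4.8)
The plan is to derive decidability of $K45(\mathbf{G})$ as a routine consequence of the completeness theorem (Theorem~\ref{WeakCompleteness}) together with the bounded finite model property established in Theorem~\ref{Decib2}. The key observation is that, by combining these two results, theoremhood in $K45(\mathbf{G})$ is reduced to a finite, effectively checkable search over models whose size is bounded by a computable function of the input formula.

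First I would recall that by Theorem~\ref{WeakCompleteness} we have $\vdash_{K45(\mathbf{G})}\varphi$ iff $\models_{\Pi\mathcal{G}}\varphi$, so deciding theoremhood is the same as deciding $\Pi\mathcal{G}$-validity. Next, by Theorem~\ref{Decib2}, $\models_{\Pi\mathcal{G}}\varphi$ holds iff $\varphi$ is valid in all finite $\Pi GF$-models $\m=\langle W,\pi,T,e\rangle$ with $|W|+|T|\leq 2(\ell(\varphi)+2)$. Chaining these equivalences gives that $\vdash_{K45(\mathbf{G})}\varphi$ iff $\varphi$ is valid in every such bounded finite $\Pi GF$-model. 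It therefore remains only to argue that the latter condition is decidable.

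For decidability of the model-checking condition I would proceed as follows. Given $\varphi$, the bound $N:=2(\ell(\varphi)+2)$ is computed directly from the formula. Up to isomorphism there are only finitely many candidate frame skeletons $\langle W,T\rangle$ with $|W|+|T|\leq N$, since both $W$ and $T$ are finite of bounded cardinality. The genuinely continuous parameters are the values of $\pi$ and of $e(p,x)$ for the finitely many relevant variables $p\in Sub(\varphi)$ and worlds $x\in W$; however, since the truth values of modal subformulas are forced into the finite set $T$, only the relative order of these finitely many values together with their position relative to the elements of $T$ matters for computing $e(\psi,x)$ for every $\psi\in Sub(\varphi)$. Thus one may restrict $\pi(x)$ and $e(p,x)$ to range over the finite set $T$ (or any fixed finite refinement of it), yielding only finitely many $\Pi GF$-models to inspect; in each, evaluating $e(\varphi,x)$ at every $x\in W$ is a finite computation because $W$, $T$ and $Sub(\varphi)$ are all finite and the modal clauses are explicit maxima and minima over $W$ and $T$.

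The main obstacle, and the only point requiring care, is justifying that it suffices to let the truth values range over a finite set rather than over all of $[0,1]$: one must check that replacing the $[0,1]$-valued assignments by their induced order type relative to $T$ does not change the computed value of any subformula of $\varphi$. This follows from the fact that all relevant operations ($\min$, $\max$, G\"odel implication, and the $T$-rounding in the modal clauses) depend only on the order relations among the finitely many values involved, so an order-isomorphism fixing $T$ preserves all the values $e(\psi,x)$ for $\psi\in Sub(\varphi)$ --- exactly the content underlying Lemma~\ref{extended}. Granting this, the whole procedure is a terminating finite enumeration, and hence $K45(\mathbf{G})$ is decidable.
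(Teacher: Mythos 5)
Your proposal is correct and takes essentially the same route the paper intends: the corollary is stated there as a direct consequence of the bounded finite model property of Theorem~\ref{Decib2} combined with completeness (Theorem~\ref{WeakCompleteness}), and your enumeration argument merely spells out the routine search that the paper leaves implicit. One small point of care: restricting $\pi$ and $e(p,x)$ to range over $T$ itself would not suffice (cf.\ the counter-model for $\Box\neg\neg p \to \neg\neg\Box p$, where $e(p,a)=\tfrac{1}{2}\notin T$), so your parenthetical ``sufficiently fine finite refinement,'' justified by the order-isomorphism content of Lemma~\ref{extended}, is the essential ingredient and is correctly invoked.
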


Similarly, one can prove that the logic $KD45({\bf G})$ is decidable as well. 

\begin{corollary} The logic $KD45({\bf G})$ is decidable. 
\end{corollary}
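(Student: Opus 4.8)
The plan is to mirror the development of this section inside the normalized setting, combining it with the finite strong completeness of $KD45(\mathbf{G})$ with respect to the class $\Pi^*\mathcal{G}$ of normalized possibilistic models established just after Theorem~\ref{JointCompleteness}. I would call a $\Pi GF$-model \emph{normalized}, and write $\Pi^*\mathcal{GF}$ for the resulting class, when $\Diamond \gtop$ is valid in it. Note that in a $\Pi G$-model this amounts exactly to $\sup_{w}\pi(w)=1$, while in a $\Pi GF$-model $\langle W,\pi,T,e\rangle$ with $T=\{\alpha_1 < \dots < \alpha_n = 1\}$ it amounts to $\sup_w \pi(w) > \alpha_{n-1}$. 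The goal is then to prove the analogue of Theorem~\ref{Decib2}, namely that $\models_{\Pi^*\mathcal{G}} \f$ iff $\f$ is valid in every finite normalized $\Pi GF$-model whose size is bounded by a computable function of $\ell(\f)$; decidability of $KD45(\mathbf{G})$ follows exactly as for $K45(\mathbf{G})$, since up to order-isomorphism there are only finitely many such models and validity in each is effectively checkable.

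First I would re-run Lemma~\ref{rtlD45} and its corollary inside the normalized class. The construction there replaces a model by infinitely many copies glued through the order-embeddings $\{h_k\}$ of Lemma~\ref{extended}, all of which fix $1$ and, for odd $k$, send $(\alpha_{n-1},1)$ arbitrarily close to $1$. Hence, if the starting $\Pi GF$-model validates $\Diamond\gtop$, i.e.\ carries some possibility value above $\alpha_{n-1}$, the glued $\Pi G$-model $\wh{\m}$ satisfies $\sup\wh{\pi}=1$ and is therefore normalized; as truth values are preserved, this yields $Val(\Pi^*\mathcal{G})\subseteq Val(\Pi^*\mathcal{GF})$.

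The delicate direction, and the step I expect to be the main obstacle, is the analogue of Lemma~\ref{ltrD45}, which passes from an arbitrary normalized $\Pi G$-model $\m$ to a finite submodel $\wh{\m}$ with $\wh{W}\subseteq W$. The difficulty is that $\sup_w\pi(w)=1$ need not be attained, so the naive restriction of $\pi$ to the finite set of witnesses can collapse the supremum strictly below $1$ and destroy normalization. The remedy, already exploited in the completeness proof for $KD45(\mathbf{G})$, is to enlarge the finite fragment $\Si$ by $\gtop$ and $\Diamond\gtop$ (increasing $|\Si|$ by a constant only). Since $\m$ is normalized we have $e(x,\Diamond\gtop)=1$, so the witness-selection step retains some world $y_{\Diamond\gtop}$ with $\pi(y_{\Diamond\gtop})>\alpha_{n-1}$; consequently $\sup_{\wh{y}}\wh{\pi}(\wh{y})>\alpha_{n-1}$ and $\wh{e}(x,\Diamond\gtop)=1$, so that $\wh{\m}$ again validates $\Diamond\gtop$ and is normalized in the sense above. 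The remaining box/diamond inductive steps go through verbatim, giving $Val(\Pi^*\mathcal{GF})\subseteq Val(\Pi^*\mathcal{G})$ together with a size bound $|\wh{W}|+|\wh{T}|\le 2(\ell(\f)+c)$ for a small constant $c$ absorbing the two extra fragment formulas.

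Putting the two inclusions together with the $KD45(\mathbf{G})$ completeness corollary yields the normalized analogue of Theorem~\ref{Decib2}, and hence the decision procedure: to test $\vdash_{KD45(\mathbf{G})}\f$ it suffices to check validity of $\f$ in the finitely many finite normalized $\Pi GF$-models of size at most $2(\ell(\f)+c)$, each check being a finite computation. Apart from the normalization-preservation point just described, every other ingredient is a routine transcription of the $K45(\mathbf{G})$ argument, once the fragment is closed under $\Diamond\gtop$ and the notion of normalized $\Pi GF$-model is identified with validity of the axiom $D$.
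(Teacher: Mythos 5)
Your proposal is correct and follows essentially the approach the paper intends: the paper proves decidability of $K45(\mathbf{G})$ via the two translation lemmas between $\Pi G$-models and finite $\Pi GF$-models, and then merely asserts that $KD45(\mathbf{G})$ is handled ``similarly,'' which is exactly the adaptation you carry out. Your write-up supplies the two details the paper leaves implicit and gets both right: the correct notion of normalized $\Pi GF$-model (validity of $\Diamond\top$, i.e.\ $\sup_w \pi(w) > \alpha_{n-1}$, rather than $\sup_w \pi(w)=1$, which the finite restriction could not preserve), and the need to close the fragment under $\top$ and $\Diamond\top$ so that the witness selection keeps a world of sufficiently large possibility degree, ensuring normalization survives both the gluing and the finite-restriction constructions.
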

}

%\blue{Any comment about decidability of $KD45({\bf G})$?? } \textcolor{red} {YES}

\section{Conclusions} \label{sec:Disc-concl}
In this paper we have studied the logic over G\"odel fuzzy logic arising from many-valued G\"odel Kripke models with possibilistic semantics, and have shown that it actually corresponds to a simplified semantics for the 
logic $K45({\bf G})$, the extension of Caicedo and Rodriguez's bi-modal G\"odel logic with many-valued versions of the well-known modal axioms  $4$ and $5$. We have also considered the extension with the axiom $D$, the logic $KD45({\bf G})$, and have shown to be captured by normalised possibilistic G\"odel Kripke models. In this way, we have obtained many-valued G\"odel generalizations of the results reported by Pietruszczak in \cite{Pietrus09} about simplified semantics for several classical modal logics. We have also shown the decidability of those logics.

It is worth noticing that the truth-value of a formula $\Diamond \varphi$ 
in a possibilistic Kripke model is indeed a proper generalization of the possibility measure of $\varphi$ when $\varphi$ is a classical proposition, however the semantics of $\Box \varphi$ is not. This is due to the fact that the negation in G\"odel logic is not involutive. Therefore, a first open problem we leave for further research is to consider to extension of the logic $K45({\bf G})$ with an involutive negation and investigate its possibilistic semantics. {Finally, we would like to study the connection between our simplified possiblistic semantics and the pseudomonadic algebras proposed in \cite{BCR2019}}.

\paragraph{Acknowledgments} Rodriguez acknowledges  partial support  of Argentinean projects: PICT-2019-2019-00882, UBA-CyT-20020190100021BA and PIP 112-2015-0100412 CO.
Tuyt is supported by Swiss National Science Foundation (SNF) grant 200021\_184693. Esteva and Godo acknowledge  partial support  by the Spanish project 
PID2019-111544GB-C21.

\section*{Appendix}

%\noindent {\bf Lemma \ref{normalization}} {\em Let $u \in W^v$ and let $\nu: {Var\cup X} \mapsto [0,1]$ be a G\"odel valuation. Define $\delta = \max \{ u(\lambda) : \nu(\lambda) < 1 \mbox{ and } \lambda \in \Delta_\varphi\}$ and  $\beta = \min \{ u(\lambda) : \nu(\lambda)= 1 \mbox{ and } \lambda \in \Delta_\varphi\}$. If $\nu$ satisfies the following conditions:
%
%\begin{description}
%\item [a.] $\nu(ThK45(\mathbf{G}))=1$.
%\item [b.] For all $\lambda \in X$, we have $u(\lambda) > \delta \Rightarrow \nu(\lambda)=1$.
%\item [c.] For any $\psi, \phi \in \{\lambda : u(\lambda) \leq \delta \mbox{ and } \lambda \in X \}$: $u(\psi) < u(\phi)$ implies $\nu(\psi) < \nu(\phi)$.
%\end{description}
%then, there exists a G\"odel valuation $w \in W^v$ such that for any $\varepsilon > 0$ with $\delta+\varepsilon < \beta$, and for any formulae $\psi$ and $\phi$, the following conditions hold:
%\begin{enumerate}
%\item $\nu(\psi) = 1 $  implies $ w(\psi) \geq \delta+\varepsilon$.
%\item $\nu(\psi) < 1 $ implies $ w(\psi) < \delta+\varepsilon$.
%\item $1 \neq  \nu(\psi) \leq \nu(\phi)$ implies $w(\psi) \leq w(\phi)$.
%\item $\nu(\psi) < \nu(\phi)$ implies $ w(\psi) < w(\phi)$.
%%\item $\nu(\psi) = \nu(\phi) = 1  \mbox{ and } u(\psi) \leq u(\phi)$ imply $ w(\psi) \leq w(\phi)$.
%%\item $\nu(\psi) = \nu(\phi) = 1 \mbox{ and } u(\psi) < u(\phi)$ imply $ w(\psi) < w(\phi)$.
%\end{enumerate}
%}

%\noindent {\bf Proof of Lemma 2}

\noindent \textbf{Claim from Lemma \ref{normalization}}.  {\em Induction hypothesis (IH):  if Properties 1--4 are satisfied by formulas with complexity at most $n$ then these properties keep holding for formulae with complexity at most $n+1$.}
\begin{proof2}
This case will become important when we try to prove that $w$ satisfies the axioms of $K45(\mathbf{G})$. \label{extraproperty}
% \end{enumerate}
% In fact, in the case both formulae $\psi$ and $\phi$ belong to $Var \cup X$  we can prove that two further conditions hold:
% \begin{enumerate}
% \item[5.] If $\nu(\psi)=\nu(\phi)=1$ and $u(\psi) \leq u(\phi)$ then $w(\psi) \leq w(\phi)$.  \label{extraproperty1}
% \item[6.] If $\nu(\psi)=\nu(\phi)=1$ and $u(\psi) < u(\phi)$ then $w(\psi) < w(\phi)$.  \label{extraproperty2}
% \end{enumerate}
% Indeed, by definition of $w$, we have $w(\psi)= h(u(\psi))$ and $w(\phi)= h(u(\phi))$, and since $h$ is strictly increasing, $u(\psi) \leq u(\phi)$ (resp. $u(\psi) < u(\phi)$) implies $h(u(\psi)) \leq h(u(\phi))$ (resp. $h(u(\psi)) < h(u(\phi))$), as desired. These properties will become important when proving that $w$ satisfies the axioms of $K45(\mathbf{G})$.
%

%Now, we consider the inductive step. The induction hypothesis (IH) is that Properties 1--4 are satisfied by formulas with complexity at most $n$ and we want to prove these properties keep holding for formulae with complexity at most $n+1$. 
We consider different cases according to the main operator being either $\wedge$ or $\to$.
First, we prove Properties 1 and 2, that involve only one formula:
  \begin{description}
    \item[Case $\psi= \psi_1 \wedge \psi_2$.]  \ \vspace{-3mm} \\
    \begin{enumerate}
      \item Let $\nu(\psi_1 \wedge \psi_2)=1$.  Then $\nu(\psi_1) = \nu(\psi_2)=1$. In such a case, from the inductive hypothesis, we conclude that $w(\psi_1) \geq \delta +\varepsilon$ and $w(\psi_2)\geq \delta +\varepsilon$. Therefore, $w(\psi_1 \wedge \psi_2) \geq \delta +\varepsilon$.
      \item Let $\nu(\psi_1 \wedge \psi_2)< 1$. Then, without loss of generality, we can assume that $\nu(\psi_1 \wedge \psi_2)=\nu(\psi_1)\leq \nu(\psi_2)$. In the case that $\nu(\psi_1) < 1$, we can apply the inductive hypothesis and conclude that $w(\psi_1) < \delta + \varepsilon$ and $w(\psi_1) \leq w(\psi_2)$, according to Properties 2 and 3, respectively. 
Therefore, we obtain $w(\psi_1 \wedge \psi_2) = w(\psi_1) < \delta+\varepsilon$.
    \end{enumerate}
    \item[Case $\psi= \psi_1 \to \psi_2$.] \ \vspace{-3mm} \\
    \begin{enumerate}
      \item Let $\nu(\psi_1 \to \psi_2)=1$. Then $\nu(\psi_1) \leq \nu(\psi_2)$. First, we consider the case $\nu(\psi_1)<1$ where we can apply Property 3 by the inductive hypothesis, obtaining that $w(\psi_1) \leq w(\psi_2)$ and, hence, $w(\psi_1 \to \psi_2) =1 \geq \delta+\varepsilon$. 
Now, we turn to the case $\nu(\psi_1) = 1 = \nu(\psi_2)$. In this situation, by applying Property 1 on $\psi_2$, we conclude that $w(\psi_1 \to \psi_2) \geq w(\psi_2) \geq \delta + \varepsilon$.
      \item Let $\nu(\psi_1 \to \psi_2)< 1$. Then $\nu(\psi_1) > \nu(\psi_2)$. By inductive hypothesis, by applying Property 4, we have that  $w(\psi_1) > w(\psi_2)$, and hence $w(\psi_1 \to \psi_2) = w(\psi_2)$, but since  $\nu(\psi_2)< 1$, then by Property 2, $w(\psi_2) < \delta + \varepsilon$.
    \end{enumerate}
  \end{description}

  Next we prove Properties 3 and 4, that involve two formulas. We proceed 
by considering the four possible combinations.
  \begin{description}
    \item[Case $\psi=\psi_1 \wedge \psi_2$ and $\phi=\phi_1 \wedge \phi_2$.] In this case the proof is easy because, without loss of generality, we can assume that $\nu(\psi_1 \wedge \psi_2) = \nu(\psi_1)$ and $\nu(\phi_1 \wedge \phi_2) = \nu(\phi_1)$.
    \begin{description}
      \item[$3.$] Let $1 \neq \nu(\psi_1 \wedge \psi_2)\leq \nu(\phi_1 \wedge \phi_2)$. According to our assumption, this means $1 \neq \nu(\psi_1) \leq \nu(\phi_1)$, and by applying induction hypothesis we obtain $w(\psi_1) \leq w(\phi_1)$ and $w(\psi_1) < \delta + \varepsilon$. Now, we need to consider two additional cases:
          \begin{description}
            \item[$a.$] If $\nu(\psi_2) = 1$ then, by IH, $w(\psi_2) \geq \delta + \varepsilon$. Thus, we obtain $w(\psi_1\wedge \psi_2) = w(\psi_1) < \delta + \varepsilon$. In addition, if $\nu(\phi_1)<1$ then, by IH (Property 3), $w(\phi_1) \leq w(\phi_2)$ and $w(\psi_1 \wedge \psi_2)\leq w(\phi_1 \wedge \phi_2)$. On the contrary, if $\nu(\phi_1)=1$ then, $\nu(\phi_2)=1$ and $w(\phi_1 \wedge \phi_2) \geq \delta + \varepsilon > w(\psi_1 \wedge \psi_2)$.
            \item[$b.$] If $\nu(\psi_2) < 1$ then, by IH on Properties 2 and 3, $w(\psi_1) \leq w(\psi_2) < \delta + \varepsilon$ which implies $w(\psi_1)= w(\psi_1 \wedge \psi_2) \leq w(\phi_1)$. The proof of this case is completed by reproducing the both alternatives when $\nu(\phi_1)< 1$ and $\nu(\phi_1)=1$, which were analyzed in the previous item.
          \end{description}
      \item[$4.$] Let $\nu(\psi_1 \wedge \psi_2) < \nu(\phi_1 \wedge \phi_2)$. Again, this means $\nu(\psi_1) < \nu(\phi_1)$. By IH, we obtain $w(\psi_1) < w(\phi_1)$ and $w(\psi_1) < \delta + \varepsilon$ (Properties 4 
and 2, respectively). The rest of the proof runs as before.
    \end{description}
    \item[Case $\psi=\psi_1 \to \psi_2$ and $\phi=\phi_1 \wedge \phi_2$.] Again, without loss of generality, we can assume that $\nu(\phi_1 \wedge \phi_2) = \nu(\phi_1)$.
    \begin{description}
      \item[$3.$] Let $1 \neq \nu(\psi_1 \to \psi_2)\leq \nu(\phi_1 \wedge \phi_2)$. Since $\nu(\psi_1 \to \psi_2)< 1$, we conclude that $\nu(\psi_1) > \nu(\psi_2)$ and $1 \neq \nu(\psi_2) \leq \nu(\phi_1)$. By IH, we know that $w(\psi_2) \leq w(\phi_1)$ and $w(\psi_1)> w(\psi_2)$. In addition, if $1\neq \nu(\phi_1) \leq \nu(\phi_2)$ then, by IH, we have $w(\psi_1 \to \psi_2) = w(\psi_2) \leq w(\phi_1) = w(\phi_1 \wedge \phi_2)$. On the contrary, if $\nu(\phi_1) = 1 = \nu(\phi_2)$ then $w(\phi_1 \wedge \phi_2) \geq \delta + \varepsilon > w(\psi_2) = w(\psi_1 \to \psi_2)$.
      \item[$4.$] Let $\nu(\psi_1 \to \psi_2) < \nu(\phi_1 \wedge \phi_2)$. 
      As in the previous proof, we can conclude that $\nu(\psi_1) > \nu(\psi_2)$ and so by Property 4, $w(\psi_1) > w(\psi_2)$, i.e.\ $w(\psi_1 \to \psi_2) = w(\psi_2)$. Moreover, as $\nu(\psi_2) < \nu(\phi_1)$, $w(\psi_2) < w(\phi_1)$ again by Property 4. 
      If $1\neq \nu(\phi_1) \leq \nu(\phi_2)$ then, by IH, $w(\phi_1) = w(\phi_1 \wedge \phi_2)$. On the contrary, if $\nu(\phi_1) = 1 = \nu(\phi_2)$ then, by IH, $w(\phi_1 \wedge \phi_2) \geq \delta + \varepsilon > w(\psi_2) = w(\psi_1 \to \psi_2)$.
    \end{description}
    \item[Case $\psi=\psi_1 \wedge \psi_2$ and $\phi=\phi_1 \to \phi_2$.] Again, without loss of generality, we can assume that $\nu(\psi_1 \wedge \psi_2) = \nu(\psi_1)$.
    \begin{description}
      \item[$3.$] Let $1 \neq \nu(\psi_1 \wedge \psi_2)\leq \nu(\phi_1 \to \phi_2)$. According to our assumption, we know $\nu(\psi_1 \wedge \psi_2)=\nu(\psi_1)<1$. Then, by IH, we obtain $w(\psi_1 \wedge \psi_2)=w(\psi_1)< \delta + \varepsilon$. If $\nu(\phi_1 \to \phi_2)<1$ then $ \nu(\phi_1) > \nu(\phi_2) \geq \nu(\psi_1)$ and, by IH, we know $w(\phi_1 \to 
\phi_2)= w(\phi_2)\geq w(\psi_1)= w(\psi_1 \wedge \psi_2)$. On the contrary, $\nu(\phi_1 \to \phi_2)=1$ implies $\nu(\phi_1) \leq \nu(\phi_2)$. In this case, if $\nu(\phi_1) < 1$ then, by IH $w(\phi_1) \leq w(\phi_2)$ which implies $w(\phi_1 \to \phi_2)=1$. If $\nu(\phi_1) = 1 = \nu(\phi_2)$ then, by IH, $w(\phi_1 \to \phi_2) \geq w(\phi_2) \geq \delta + \varepsilon > w(\psi_1) = w(\psi_1 \wedge \psi_2)$.
      \item[$4.$] Let $\nu(\psi_1 \wedge \psi_2) < \nu(\phi_1 \to \phi_2)$. In this case, we can proceed analogously to the previous proofs. For this reason, we leave to the reader to complete the details.
    \end{description}
    \item[Case $\psi=\psi_1 \to \psi_2$ and $\phi=\phi_1 \to \phi_2$.] \ \vspace{-3mm} \\
    \begin{description}
      \item[$3.$] Let $1 \neq \nu(\psi_1 \to \psi_2)\leq \nu(\phi_1 \to \phi_2)$. As $\nu(\psi_1 \to \psi_2) \neq 1$, we conclude $\nu(\psi_1 \to \psi_2) = \nu(\psi_2) < 1$. In addition, if $\nu(\phi_1 \to \phi_2)< 1$ then $\nu(\phi_1 \to \phi_2)= \nu(\phi_2)$. Thus, if the truth values of both implications are less than one, we have $1 \neq \nu(\psi_2) \leq \nu(\phi_2)$. By IH, we know $w(\psi_2) \leq w(\phi_2)$, $w(\psi_1) > w(\psi_2)$ and $w(\phi_1) > w(\phi_2)$. Therefore, $w(\psi_1 \to \psi_2) = w(\psi_2) \leq w(\phi_2) = w(\phi_1 \to \phi_2)$.
          On the contrary, if $\nu(\phi_1 \to \phi_2)= 1$ then $\nu(\phi_1 \leq \nu(\phi_2)$. If $\nu(\phi_1)<1$ then, by IH, $w(\phi_1 \to \phi_2) =1$.
          In the opposite case, if $\nu(\phi_1)=1=\nu(\phi_2)$ then, by IH, $w(\phi_1 \to \phi_2) \geq w(\phi_2) \geq \delta + \varepsilon > w(\psi_2) = w(\psi_1 \to \psi_2)$.
      \item[$4.$] Let $\nu(\psi_1 \to \psi_2) < \nu(\phi_1 \to \phi_2)$. If both $\nu(\psi_1 \to \psi_2) < 1 $ and $\nu(\phi_1 \to \phi_2) < 1$ then we can proceed as in the previous proof and we can apply IH concluding 
that $w(\psi_1 \to \psi_2)=w(\psi_2) < w(\phi_2)=w(\phi_1 \to \phi_2)$. On the contrary, if $\nu(\phi_1 \to \phi_2) = 1$ then we need to consider two alternatives. First, $1\neq\nu(\phi_1)\leq \nu(\phi_2)$ implies 
by IH that $w(\psi_1 \to \psi_2)=w(\psi_2) < \delta + \varepsilon < 1 = 
w(\phi_1 \to \phi_2)$. Second, if $\nu(\phi_1)=1=\nu(\phi_2)$ then, by IH, $w(\phi_1 \to \phi_2) \geq w(\phi_2) \geq \delta + \varepsilon > w(\psi_2) = w(\psi_1 \to \psi_2)$.
    \end{description}
  \end{description}

This finishes the proof of the claim. \end{proof2}

%First of all, notice that $w \sim_\varphi v$. Indeed,
%let $\chi \in X$ and recall that, by hypothesis, $u \in W^v$ and thus $w \sim_\varphi v$. If $ \nu(\chi) < 1$
%and $ \chi \in \Delta_\varphi$, then $w(\chi) = u(\chi) \leq \delta$, otherwise, i.e.\ if $\nu(\chi) = 1$
%and $\chi \in \Delta_\varphi$, then $w(\chi) = h(u(\chi)) = u(\chi) \geq \beta$. Therefore, in any case, $w(\chi) = u(\chi)$ for any $\phi \in Sub(\varphi)$, that is, $w \sim_\varphi v$
%
%The axioms of $\mathcal{G}$ are evaluated to $1$ by any G\"odel valuation. As for the specific axioms of $K45(\mathbf{G})$, this follows by observing that all these axioms are of the form $\phi \to \psi$ for some $\phi, \psi \in X$. Since $w \sim_\varphi v$, we have that $w(\varphi \to \psi) = w(\varphi) \Rightarrow w(\psi) = v(\phi)  \Rightarrow v(\psi) = v(\phi \to \psi) = 1$, the last equality holding since $v$ validates all axioms $K45(\mathbf{G})$.

% Como h es estrictamente creciente obtenemos lo buscado a partir de la hip—tesis que $u(\varphi) \leq u(\psi)$. Eso surge esencialmente de la Property 5
%
%El detalle final es notar que todo axioma modal es de la pinta una f—rmula de X implica otra f—rmula de X.

\medskip

\noindent \textbf{Claim 1 from Lemma \ref{equation-joint}}. \emph{If $u(\Box \psi )=\alpha <1$, for every $\varepsilon > 0$, there exists a valuation $w\in W^c$ such that $\pi^c(w) > w(\psi)$ and $w(\psi ) < \alpha + \varepsilon$, and thus $\pi^c(w) \Rightarrow w(\psi ) = w(\psi ) < \alpha + \varepsilon$}.\medskip

\begin{proof2} 

The proof is achieved in two stages:
\begin{itemize}
\item first producing a valuation $ \nu \in W$ satisfying $\nu(\psi)<1$ and preserving the relative ordering conditions the values $w(\theta )$ must
satisfy, conditions which may be coded by a theory $\Gamma_{\psi,u}$;

\item and then moving the values $\nu(\theta )$, for $\theta \in Sub(\varphi)$,  to the correct
valuation $w$ by composing $\nu$ with an increasing function of [0,1], using Lemma~\ref{normalization}.
\end{itemize}
Assume $u(\Box \psi)=\alpha <1$, and define the following set of formulas (all formulas involved
ranging in\ $\mathcal{L}_{\square \Diamond }(Var)$):
\begin{equation*}
\begin{array}{ll}
\Gamma_{\psi,u}= & \{\chi : \chi \in X \mbox{ and } u(\chi) > \alpha \} 
\\
& \cup \{\lambda \rightarrow \theta  : \lambda \in \Delta_\varphi \mbox{ and } u(\lambda) \leq u(\Box \theta) \} \\
& \cup \{(\theta \rightarrow \lambda)\rightarrow \lambda : \lambda \in \Delta_\varphi \mbox{ and }  u(\lambda)< u(\Box \theta)< 1 \} \\
& \cup \{\theta   \rightarrow \lambda : \lambda \in \Delta_\varphi \mbox{ 
and } u(\Diamond \theta) \leq u(\lambda) \} \\
& \cup \{(\lambda \rightarrow \theta)\rightarrow \theta : \lambda \in \Delta_\varphi \mbox{ and }  u(\Diamond \theta)< u(\lambda)< 1 \} \\
& \cup \{(\chi_1 \rightarrow \chi_2) \rightarrow \chi_2 : \chi_1 , \chi_2 
\in X \mbox{ and } u(\chi_2) < u(\chi_1) \leq \alpha \} \\
\end{array}%
\end{equation*}%
Then we can check that $u(\square \xi ) > \alpha $ for each $\xi \in \Gamma_{\psi,u}$. Indeed, first recall that, by $U_\Box$ and $U_\Diamond$ of Proposition \ref{simplif},  for any $\lambda \in X$ (in particular $\Delta_{\varphi}$) we have $u(\di \lambda) \leq {u(\lambda)} \leq u(\bo \lambda)$ in $\lgc{K45(\alg{G})}$. 
%$u(\lambda) = u(\Box \lambda) = u(\Diamond \lambda)$. \olim{This is not true; this holds in $KD45(\alg{G})$, but not in $K45(\alg{G})$. For example, $\bo \varphi \to \di \bo \varphi$ is false even in classical $\lgc{K45}$. \ricardo{You are right. I didn't realize this problem. I need to think about it. }We only have $u(\di \lambda) \leq \lambda \leq u(\bo \lambda)$ in $\lgc{K45(\alg{G})}$, but this actually suffices for our purposes here.}  
We 
analyse case by case. For the first set of formulas,  it is clear by construction. For the second set,  we have
$u(\square (\lambda \rightarrow \theta )) \geq u(\Diamond \lambda
\rightarrow \square \theta)= u(\Diamond \lambda) \Rightarrow u(\Box \theta) \geq u(\lambda) \Rightarrow u(\Box \theta) = 1$,  by $FS2$. For the 
third, by FS2 and P, we have
\begin{align*}
u(\square ((\theta \rightarrow \lambda)\rightarrow \lambda)) \geq u(\Diamond(\theta \to \lambda) \to \Box \lambda) &\geq
u((\Box \theta  \to \Diamond \lambda) \to \Box \lambda) \\
	&\geq u((\bo \theta \to \lambda) \to \bo \lambda) \\
	&= {u(\lambda) \to u(\bo \lambda)} = 1,
\end{align*}
since $u(\di \lambda) \leq u(\lambda) \leq u(\bo \lambda)$ and $u(\lambda) < u(\bo \theta)$. The fourth and fifth  cases are very similar 
to the second and third ones. {As for the sixth case, by (T4), either $u(\Box \chi_1 \to \Diamond \chi_2) = 1$  or  $u(\Box((\chi_1 \to \chi_2) \to \chi_2)) = 1$. 
According to $(U_\Box)$, $(U_\Diamond)$, $(5_\Box)$ and $(5_\Diamond)$, we have that, for any $\varphi$, $\Diamond\Box \varphi \to \Box \varphi$ and $ \Diamond \varphi \to \Box\Diamond \varphi$ are theorems of $K45(G)$,  and thus 
%and hence, if A \in X, them 
 $u(\Box \chi_1) \geq u(\chi_1)$ and $u(\Diamond \chi_2) \leq u(\chi_2)$, from where it follows that
$$u(\Box \chi_1 \to \Diamond \chi_2) = u(\Box \chi_1) \to u(\Diamond \chi_2) \leq u(\chi_1) \to u(\chi_2) = u(\chi_2) < 1.$$
Therefore, it holds that $u(\Box((\chi_1 \to \chi_2) \to \chi_2)) = 1$.}  
%and since \chi_1, \chi_2 \in X, u(Box \chi_1) = u(\chi_1) and u(Diamond \chi_2) = u(\chi_2). Hence:
%
%u(Box \chi_1 \to Diamond \chi_2)= u(\chi_1) \to u(\chi_2).
%
%By (the old) definition of the sixth set, u(\chi_2) < u(\chi_1) <= alpha < 1, and thus u(\chi_1) \to u(\chi_2) = u(\chi_2) < 1.
%
%Therefore we conclude that u(Box(\chi_1 \to \chi_2) \to \chi_2)) = 1.

The fact that  $u(\square \xi ) > \alpha $ for each $\xi \in \Gamma_{\psi,u}$ implies
\begin{equation*}
\Gamma_{\psi,u} \not\vdash _{K45(\mathbf{G})}\psi,
\end{equation*}
otherwise there would exist $\xi _{1},\ldots ,\xi _{k}\in \Gamma_{\psi,u}$
such that $\xi _{1},\ldots ,\xi _{k} \vdash _{K45(\mathbf{G})} \psi.$ In such a case, we would have $\Box \xi _{1},\ldots ,\Box \xi _{k} \vdash _{K45(\mathbf{G})}\Box \psi $ by $Nec$
and $K_{\square }$. Then $\Box \xi _{1},\ldots ,\Box
\xi _{k},ThK45(\mathbf{G})\vdash _G\Box \psi$ by Lemma \ref{reduction}, and thus by Proposition \ref{ordersoundness} (i), and recalling that $u(ThK45(\mathbf{G}))=1$, we would have
\begin{equation*}
\alpha < \inf u(\{\Box \xi _{1},\ldots ,\Box \xi _{k}\}\cup ThK45(\mathbf{G}))\leq u(\square \psi )=\alpha ,
\end{equation*}
a contradiction. Therefore, by Proposition \ref{ordersoundness} (ii) there
exists a valuation $\nu:Var\cup X\mapsto \lbrack 0,1]$ such that $\nu(\Gamma_{\psi,u} \cup ThK45(\mathbf{G}))=1$ and $%
\nu(\psi )<1$. This implies the following relations between $u$ and $\nu,$
that we list for further use. Given $\lambda, \lambda_1, \lambda_2 \in \Delta_\varphi, \  \theta \in \mathcal{L}_{\square \Diamond }(Var)$ and $\chi, \chi_1, \chi_2 \in X$, we have: \label{proplambda}\medskip
\begin{description}

\item[\#\textbf{1}.] If $u(\chi)>\alpha $ then $\chi \in \Gamma_{\psi,u}$, and hence $\nu(\chi)=1$.

\item[\#\textbf{2}.] If $u(\lambda)\leq u(\square \theta)$  then $\nu(\lambda)\leq \nu(\theta )$ (since then $\lambda \rightarrow \theta \in \Gamma_{\psi,u})$. In particular, if $\lambda_1,\lambda_2 \in \Delta_\varphi$ and $u(\lambda_1)\leq u(\Box\lambda_2)$ then $\nu(\lambda_1)\leq \nu(\lambda_2)$. Furthermore, if $\Box \theta \in \Delta_\varphi$ then $\nu(\Box \theta) \leq \nu(\theta)$. That means, taking $\theta = 
\psi$,  $\nu(\Box \psi) \leq \nu(\psi) < 1$.

\item[\#\textbf{3}.] If $u(\lambda)< u(\square \theta)< 1$
then $\nu(\lambda)< \nu(\theta)$ or $\nu(\lambda)=1$ (since then $(\theta \rightarrow \lambda)\rightarrow \lambda)\in
\Gamma_{\psi,u}$). In particular, if $\lambda_1,\lambda_2 \in \Delta_\varphi$, $u(\lambda_1) < u(\lambda_2)$ and $u(\lambda_2) \leq u(\Box \psi)= 
\alpha $ then $\nu(\lambda_1) < \nu(\psi) < 1$ and thus $\nu(\lambda_1) < 
\nu(\lambda_2)$. This means that $\nu$ preserves in a strict sense the order values by $u$ of the formulas $\lambda \in \Delta_\varphi$ such that $u(\lambda) \leq \alpha$.
\item[\#\textbf{4}.] If $ u(\Diamond \theta) \leq u(\lambda)$ then $\nu(\theta) \leq \nu(\lambda)$ (because $\theta \to \lambda \in \Gamma _{\psi ,u}$). In particular, if $\Diamond \theta \in \Delta_\varphi$ then $\nu(\theta) \leq \nu(\Diamond \theta)$.

\item[\#\textbf{5}.] If  $ u(\Diamond \theta)< u(\lambda)< 1$ then $\nu(\theta) < \nu(\lambda)$ or $\nu(\theta) =1$. In particular, if $\lambda_1, \lambda_2 \in \Delta_\varphi$ and $u(\di \lambda_1) \leq u(\lambda_1)< u(\lambda_2)\leq \alpha = u(\Box \psi)$ then $\nu(\lambda_1)< \nu(\lambda_2)$. Furthermore, if $u(\lambda_2)>0$ then $\nu(\lambda_2)>0$ (taking $\lambda_1:=\Diamond\gbot$ since $u(\gbot )=u(\Diamond \gbot)=0$).
\item[\#\textbf{6}.] If $ u(\chi_2) < u(\chi_1) \leq \alpha$ then $\nu(\chi_2) < \nu(\chi_1) < 1$. Indeed, note that, by construction, $\nu(\psi) <1$, that implies, by \#\textbf{2}, $\nu(\Box \psi) < 1$. In addition, we know $\Box \psi \in \Delta_\phi$ and $u(\Box \psi) = \alpha$. Therefore, according to \#\textbf{4}, if $\chi_1 \in X$ and $u(\di \chi_1) \leq u(\chi_1) \leq u(\bo \psi) = u(\bo \bo \psi)$, then $\nu(\chi_1) \leq \nu(\bo \bo \psi) = \nu(\bo \psi) < 1$.
Finally, since $(\chi_1 \to \chi_2) \to \chi_2 \in \Gamma_{\psi ,u}$ we have $\nu(\chi_2) < \nu(\chi_1) < 1$.
\item[\#\textbf{7}.] {If $u(\lambda_1)\leq  u(\lambda_2)$ then $\nu(\lambda_1)\leq  \nu(\lambda_2)$. Note that this is a particular case of \#\textbf{2}.  Certainly,  if $\lambda_1, \lambda_2 \in \Delta_\varphi$, then by Lemma \ref{rmk} we have  
$u(\lambda_2) \leq  u(\Box\lambda_2)$, and, hence, by \#\textbf{2}, $\nu(\lambda_1)\leq  \nu(\lambda_2)$.}
\end{description}

According to the properties \#1--\#7, it is clear that $\nu$ satisfies the conditions {\bf a}-{\bf d} of Lemma \ref{normalization}. Indeed, from \#1 and \#2 it follows that
$$\alpha =  \max \{ u(\lambda) : \nu(\lambda) < 1 \mbox{ and } \lambda \in \Delta_\varphi\},$$
because, by construction of $\Gamma_{\psi,u}$ and by \#1, $\forall \lambda \in \Delta_\varphi$, if $u(\lambda) > \alpha$  then $\nu(\lambda) =1$ 
and by \#\textbf{2}, $\nu(\Box \psi) < 1$. Therefore $\alpha$ coincides with the value $\delta$ defined in  Lemma \ref{normalization}.

Consequently, for all $\varepsilon > 0$ such that $ \alpha +\varepsilon<\beta$, where $\beta$ is defined as in Lemma~\ref{normalization}, %taking  $\delta = \alpha$  in Lemma \ref{normalization},
there exists a valuation $w \in W^{v}$ such that $w(\psi) < \alpha +\varepsilon$.
Then in order to finish our proof, it remains to show that:
\begin{equation}\label{A-bound}
  \pi^c(w)= \inf_{\lambda \in sub(\varphi)} \min(v(\Box \lambda) \Rightarrow w(\lambda),  w(\lambda) \Rightarrow v(\Diamond \lambda)) >w(\psi)
\end{equation}
To do so, we will prove that, for any $\lambda \in sub(\varphi)$, both implications in (3) are  greater than or equal to $\alpha + \varepsilon$.\footnote{Remember that $u \sim_\varphi v \sim_\varphi w$.}
First we prove it for the first implication by cases:
\begin{description}
\item[-] If $\ v(\Box \lambda ) \leq \alpha < 1$ then $v(\Box \lambda) \Rightarrow w(\lambda) = 1$. Indeed, first of all, by   \#2,
from $u (\Box \lambda) = v(\Box \lambda ) \leq \alpha = u(\Box \psi)$ 
it follows $\nu(\Box \lambda) \leq \nu(\psi) < 1$. Now, since $u(\Box\lambda) \leq u(\Box\lambda)$, by \#2, we have $1 \neq \nu(\Box \lambda) \leq 
\nu(\lambda)$, and by 3 of Lemma \ref{normalization} we have $ v(\Box\lambda) = w(\Box \lambda) \leq w(\lambda)$. Then $v(\Box \lambda) \Rightarrow w(\lambda) =1$.

\item[-] If $v(\Box \lambda ) > \alpha$  then by \#1 and \#2, $1=\nu(\Box \lambda)\leq\nu(\lambda)$. Therefore, by 1 of Lemma \ref{normalization}, $w(\lambda)\geq \alpha + \varepsilon$ which implies $v(\Box\lambda) \Rightarrow w(\lambda) > \alpha$.
\end{description}

\noindent For the second implication we also consider  two cases:
\begin{description}
\item[-] If $v(\Diamond \lambda) = u(\Diamond \lambda) > \alpha$ then $\nu(\di \lambda) =1$ by \#1, so $w(\lambda) \geq w(\di \lambda) \geq \alpha + \varepsilon$.
it is obvious that $w(\lambda) \Rightarrow v(\Diamond \lambda) > \alpha$ because by Lemma \ref{normalization} we know $w(\lambda) \geq \alpha + \varepsilon$.

\item[-]  If $u(\Diamond \lambda) \leq \alpha$ then we obtain by \#4, that  $\nu(\lambda) \leq \nu(\Diamond \lambda) < 1$. Then by Lemma \ref{normalization} we have $w(\lambda) \leq w(\Diamond \lambda) = v(\Diamond \lambda)$ and thus $w(\lambda) \Rightarrow v(\Diamond \lambda) = 1$.  \hfill
\end{description}
\end{proof2}
\vspace{0.2cm}

\noindent \textbf{Claim 2 from Lemma  \ref{equation-joint}.  }\emph{If $u(\Diamond \psi )=\alpha >0$ then, for any small enough $\varepsilon >0,$ there exists a valuation $w'\in W$ such that $\min(w'(\psi ), \pi^c(w')) \geq \alpha -\varepsilon$}.

\begin{proof2}
%\blue{
Assume $u(\Diamond \psi )=\alpha >0$\footnote{In this context, $\alpha$ 
plays the role of $\beta$ in Lemma \ref{normalization}}. Then, we take $\delta = \max \{ u(\lambda) < \alpha : \lambda \in \Delta_\varphi \}$ (note that the last set is not empty because $\Diamond \gbot$ belongs to it) 
and define $\Gamma'_{\psi,u}$ in a similar way that it was defined in the 
proof of Claim 1:

\begin{equation*}
\begin{array}{ll}
\Gamma'_{\psi,u}= & \{\chi : \chi \in X \mbox{ and } u(\chi) > \delta \} \\
& \cup \{\lambda \rightarrow \theta  : \lambda \in \Delta_\varphi \mbox{ and } u(\lambda) \leq u(\Box \theta) \} \\
& \cup \{(\theta \rightarrow \lambda)\rightarrow \lambda : \lambda \in \Delta_\varphi \mbox{ and }  u(\lambda)< u(\Box \theta)< 1 \} \\
& \cup \{\theta   \rightarrow \lambda : \lambda \in \Delta_\varphi \mbox{ 
and } u(\Diamond \theta) \leq u(\lambda) \} \\
& \cup \{(\lambda \rightarrow \theta)\rightarrow \theta : \lambda \in \Delta_\varphi \mbox{ and }  u(\Diamond \theta)< u(\lambda)< 1 \} \\
& \cup \{(\chi_1 \rightarrow \chi_2) \rightarrow \chi_2 : \chi_1 , \chi_2 
\in X \mbox{ and } u(\chi_2) < u(\chi_1) \leq \delta \} \\
\end{array}%
\end{equation*}%

If $u(\Diamond \psi )=\alpha >0$, then we let $U_{\psi,u}^{{}} = \psi 
\to \theta$ where $\theta \in \Delta_\varphi$ and $u(\theta) = \delta$. 
We claim that
\begin{equation*}
\Gamma'_{\psi,u} \not\vdash _{K45(\mathbf{G})} U_{\psi ,u} \ ,
\end{equation*}
otherwise there would exist $\theta_1, \ldots, \theta_n \in \Gamma'_{\psi,u}$ such that $\vdash _{K45(\mathbf{G})} (\theta_1\wedge \ldots \wedge \theta_n) \to U_{\psi ,u}$, and then we would have $\vdash _{K45(\mathbf{G})} \Box(\theta_1\wedge \ldots \wedge \theta_n)  \to \Box (\psi \to \theta)$, which would imply $\vdash _{K45(\mathbf{G})} (\Box\theta_1\wedge \ldots \wedge \Box\theta_n)  \to (\Diamond \psi \to \Box \theta)$.
In that case, evaluating with $u$ it would yield $ 1= u(\Box \theta_1\wedge \ldots \wedge \Box\theta_n ) \Rightarrow u(\Diamond \psi \to \theta )$, contradiction, since $u(\Box\theta_1\wedge \ldots \wedge \Box\theta_n) > \delta$  and $u(\Diamond \psi \to \theta) = \delta$ (because by definition $u(\theta)< u(\Diamond \psi)$).

Therefore, there is an evaluation $\nu'$ such that $\nu'(ThK45(\mathbf{G}) )=\nu'(\Gamma'_{\psi,u})=1$, $\nu'(\Diamond \psi)=1$ and $\nu'(\psi \to \theta)<1$. Hence, Condition $\bf a$, in Lemma \ref{normalization} 
is satisfied. Condition $\bf b $ is also satisfied because by definition of $\Gamma'_{\psi,u}$ we know that $\nu'(\chi) = 1$ for every $\chi \in 
X$ such that $u(\chi) > \delta$. In addition, we can verify Condition $\bf c$ by considering Property $\#6$ of Claim 1 with $\delta$ instead of $\alpha$.

Therefore, for any $\varepsilon >0$ with $\delta + \varepsilon < \alpha$ there exists a valuation $w' \in W$ such that $w'(\psi)\geq \delta + \varepsilon$. Note that in this case $\delta < \alpha=1$. That means we are 
always able to find an appropriate valuation $w'$ such that $w'(\psi)$ is 
arbitrarily close to $\alpha$. % as we want.

It remains to show that $\pi^c(w') \geq \alpha$. Indeed, by construction, 
it holds that $\forall \theta \in \Delta_\varphi \mbox{ and } u(\Box \theta) \leq \delta : u(\Box \theta) \leq w'(\theta )\leq u(\Diamond \theta)$, and hence $\min(u(\Box \theta) \Rightarrow w'(\theta ) , w'(\theta )\Rightarrow u(\Diamond \theta )) \geq \alpha$. This finishes the proof.
%}
\end{proof2}

\end{document}